\theoremstyle{plain}
\newtheorem{theorem}{Theorem}[section]
\newtheorem*{maintheorem}{Theorem \ref{Main Thm}}
\newtheorem*{krebestheorem}{Theorem \ref{Krebes thm}}
\newtheorem*{theorem*}{Theorem}
\newtheorem{proposition}[theorem]{Proposition}
\newtheorem{corollary}[theorem]{Corollary}
\newtheorem{lemma}[theorem]{Lemma}
\theoremstyle{definition}
 \newenvironment{remark}
  {\pushQED{\qed}\remarkx}
  {\popQED\endremarkx}
\newtheorem{definition}[theorem]{Definition}
\theoremstyle{definition}
\newcommand{\R}{{\mathbb R}}
\newcommand{\Z}{\mathbb Z}
\newcommand{\Q}{\mathbb Q}
\newcommand{\nil}{\varnothing}
\newcommand{\defn}[1]{\emph{#1}}
\newcommand{\boundary}{\partial}
\newcommand{\nbhd}{\operatorname{Nb}} 
\newcommand{\interior}{\operatorname{int}} 
\renewcommand{\phi}{\varphi}
\newcommand{\co}{\mskip0.5mu\colon\thinspace}
\renewcommand{\circ}{\mathring}
\begin{document}


   \title[]{Closures of 1-tangles and annulus twists}
   \author{Scott A. Taylor}

  \begin{abstract}
  A 1-tangle is a properly embedded arc $\psi$ in an unknotted solid torus $V$ in $S^3$. Attaching an arc $\phi$ in the complementary solid torus $W$ to its endpoints creates a knot $K(\phi)$ called the closure of $\psi$. We show that for a given nontrivial 1-tangle $\psi$ there exist at most two closures that are the unknot. We give a general method for producing nontrivial 1-tangles admitting two distinct closures and show that our construction accounts for all such examples. As an application, we show that if we twist an unknot $q \neq 0$ times around an unknotted sufficiently incompressible annulus intersecting it exactly once, then there is at most one $q$ such that the resulting knot is unknotted and, if there is such, then $q = \pm 1$.  With additional work, we also show that the Krebes 1-tangle does not admit an unknot closure. Our key tools are the ``wrapping index'' which compares how two complementary 1-tangles $\phi_1$ and $\phi_2$ wrap around $W$, a theorem of the author's from sutured manifold theory, and theorems of Gabai and Scharlemann concerning band sums. 
    \end{abstract}

\maketitle


\section{Introduction}

Suppose that $V \cup_S W$ is a genus 1 Heegaard splitting of $S^3$ and that $\psi \subset V$ is a properly embedded arc. Given a properly embedded arc $\phi \subset W$ with $\boundary \phi = \boundary \psi$, we call $K(\phi)$ a \defn{closure} of $(V, \psi)$. We address the long-standing challenge of determining when $K(\phi)$ is or is not the unknot. We define a ``wrapping index'' $\omega(\phi_1, \phi_2)$ between two such arcs in $W$ (where $\phi_1$ is trivial) and prove:

\begin{theorem}\label{Main Thm}
If $(V, \psi)$ is nontrivial, then for any arc $\phi \subset W$ with $K(\phi)$ the unknot, the arc $\phi \subset W$ is trivial and, up to isotopy in $W$ relative to $\boundary \psi$, there are at most two such arcs $\phi$. Furthermore, if $\phi_1, \phi_2$ are two such arcs then $\omega(\phi_1, \phi_2) = 1$. 
\end{theorem}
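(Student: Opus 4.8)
The plan is to work in the complementary solid torus $W$ and study how closures differ from one another. Since $K(\phi)$ being the unknot means $V \cup_S W$ together with the knot admits a nice reembedding, my first step is to understand the structure forced on $\phi$ by the unknotting condition. I would set up the comparison: given two arcs $\phi_1, \phi_2 \subset W$ with $\boundary \phi_i = \boundary \psi$ and both $K(\phi_1), K(\phi_2)$ unknots, I would form the union $\phi_1 \cup \phi_2$ to get a knot or link in $W$, and relate $K(\phi_1)$ and $K(\phi_2)$ by a band sum or crossing-change move recorded by the wrapping index $\omega(\phi_1, \phi_2)$. The intuition is that the difference between two closures is governed by how $\phi_1$ and $\phi_2$ wind around the core of $W$, and that two unknotting closures cannot be ``too far apart.''

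The key structural input I would lean on is the nontriviality of $(V, \psi)$, which should guarantee some incompressibility of the arc complement in $V$ and hence restrict the possible $\phi$. First I would show that any closure-yielding $\phi$ must itself be \emph{trivial} in $W$ (boundary-parallel): if $\phi$ were knotted or wrapped nontrivially, then since $\psi$ is nontrivial in $V$, the resulting $K(\phi)$ would inherit enough complexity from both sides to be nontrivial — here I expect to invoke the sutured-manifold-theory result attributed to the author, presumably giving a lower bound (e.g. on bridge number, genus, or a width-type invariant) that rules out the unknot unless $\phi$ is trivial. This reduces the problem to counting trivial arcs $\phi$ up to isotopy rel $\boundary \psi$ that unknot.

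With $\phi$ restricted to trivial arcs, the count becomes a more combinatorial/geometric question about the mapping class of the once-punctured torus $S$ and the action on isotopy classes of trivial arcs in $W$. I would parametrize trivial arcs in $W$ by their slope or winding data, express the wrapping index $\omega(\phi_1, \phi_2)$ in these terms, and show that the unknotting condition pins this data down to at most a two-element set with the two elements satisfying $\omega(\phi_1, \phi_2) = 1$. The band-sum theorems of Gabai and Scharlemann are the natural tool here: if $\phi_1$ and $\phi_2$ both unknot but $\omega(\phi_1,\phi_2) \geq 2$, I would derive a band sum presentation of a knot that is forced to be nontrivial (contradicting that one of the closures is the unknot), using the fact that a band sum of a knot with the unknot along a sufficiently wrapped band cannot itself be trivial unless the wrapping is minimal.

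The main obstacle I anticipate is the passage from ``the two closures differ by a controlled move'' to the sharp numerical conclusion $\omega(\phi_1,\phi_2)=1$, together with ruling out a third distinct unknotting closure. Establishing that $\phi$ must be trivial is conceptually clean via the sutured-manifold bound, but extracting exactly the bound ``at most two'' — rather than merely ``finitely many'' — will require carefully showing that the set of admissible winding parameters is an interval or a pair determined by a single band-sum inequality, and that $\omega = 1$ is the only value consistent with two simultaneous unknottings. I expect the heart of the argument to be a delicate application of Scharlemann's band-sum results to force any two admissible $\phi_i$ to be ``adjacent,'' leaving no room for a third.
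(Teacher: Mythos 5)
Your proposal has two genuine gaps, each located at one of the two places where the paper's real work happens.

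First, the ``at most two'' count. You correctly flag this as the main obstacle, but your proposed mechanism --- parametrizing trivial arcs by winding data and showing the admissible set is ``an interval or a pair determined by a single band-sum inequality'' --- is not an argument, and nothing like it appears to close the gap: three arcs pairwise at wrapping index $1$ is not combinatorially impossible, so even after proving $\omega(\phi_i,\phi_j)=1$ for every pair of unknotting arcs you still have no bound of two. The paper's mechanism is entirely different and you do not mention it: fix one unknotting arc $\phi_1$ and a preferred longitude $\lambda$ bounding a disc in $W$ disjoint from $\phi_1$; any other unknotting arc $\phi$ with $\omega(\phi_1,\phi)=1$ can be isotoped to meet that disc in a single point (Proposition \ref{reduction}), and attaching a 2-handle along $\lambda$ converts $(V,\psi)$ into a nontrivial --- hence prime --- 2-tangle (Lemma \ref{lem: nontriv}) and converts each such $\phi$ into a complementary 2-tangle giving an unknot closure. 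The Bleiler--Scharlemann theorem (Theorem \ref{2-tangle thm}) then says there is at most one such complementary 2-tangle up to rational equivalence, and Proposition \ref{reduction} translates that back into equivalence of the arcs in $W$. This 1-tangle-to-2-tangle conversion plus Bleiler--Scharlemann is the missing idea; without it the sharp count does not follow.

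Second, ruling out $\omega(\phi_1,\phi_2)\geq 2$. You propose to do this purely with the Gabai/Scharlemann band-sum theorems, but the band-sum presentation of $K(\phi_2)$ exists only under a positioning hypothesis you cannot arrange in general: Lemma \ref{bubbles} requires the unknotting disc $\Delta$ for $K(\phi_1)$ to meet $S$ with no curves parallel to the preferred longitude $a$ (that is, $\boundary_a Q=\nil$). That is only Case 1 of Proposition \ref{Seifert business}. When $\boundary_a Q\neq\nil$ there is no band sum to feed into Scharlemann's theorem, and this is exactly where the sutured manifold machinery (Theorem \ref{Taylor thm}) enters, with a careful choice of sutures $\mu\cup b\cup b'$, the intersection-count inequalities (I1)--(I3) to contradict conclusion (SM3), and a Thurston-norm argument together with Lemma \ref{Jaco lem} to dispose of conclusion (SM2). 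Relatedly, you misplace the sutured-manifold input: it is not what shows $\phi$ is trivial. That step (Lemma \ref{trivial}) is elementary --- the exterior of the unknot $K(\phi)$ is a solid torus, so the genus one surface $\circ{S}$ compresses to one side, and nontriviality of $(V,\psi)$ forces the compressing disc into $W\setminus\phi$, which makes $(W,\phi)$ trivial.
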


Showing that $(W,\phi)$ is trivial when $K(\phi)$ is the unknot and $(V,\psi)$ is nontrivial is straightforward (Lemma \ref{trivial}); the significance of the theorem lies in the claim that there are at most two such arcs.  Figure \ref{Fig:Samples} gives several examples of unknot closures on nontrivial pairs $(V, \phi)$. Figure \ref{Fig:Samples}.C and \ref{Fig:Samples}.D show an example of a nontrivial $(V,\psi)$ having two distinct unknot closures. 

\begin{figure}[ht!]
\labellist
\small\hair 2pt
\pinlabel {(A)} [t] at 94 244
\pinlabel {(B)} [t] at 301 244
\pinlabel {(C)} [t] at 97 32
\pinlabel {(D)} [t] at 310 32
\endlabellist
\centering
\includegraphics[scale=0.5]{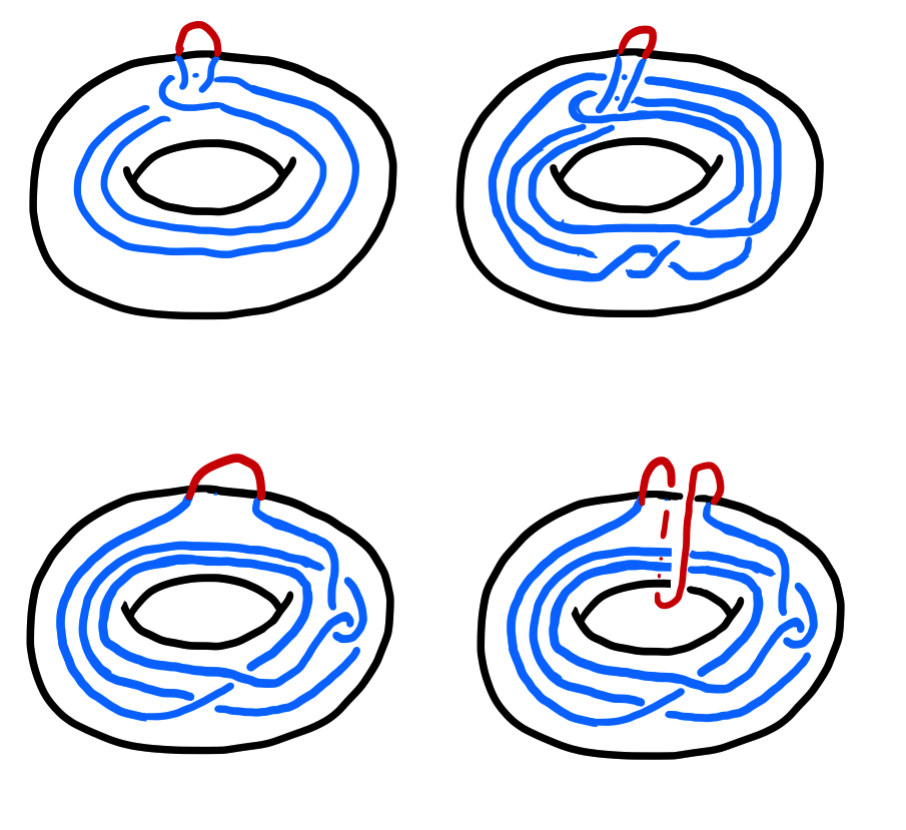}
\caption{Four examples of unknot closures of nontrivial 1-tangles. In each example, the solid torus $V$ is the inside of the black torus and the solid torus $W$ is the outside. The arc $\psi$ is the intersection of the knot with $V$ and the arc $\phi$ is the intersection of the knot with $W$. Note that in each case, the arc $\phi$ is isotopic in $W$, relative to its endpoints, into the torus. Also note that the pair $(V,\psi)$ is the same in both examples (C) and (D) but that the arcs $\phi$ are different.}
\label{Fig:Samples}
\end{figure}

In Section \ref{example}, we give a construction which produces many examples of $(V, \psi)$ where there are two such arcs $\phi$ such that $K(\phi)$ is the unknot. We also show that our construction accounts for all such examples.

On the other hand, a 1-tangle is \defn{persistent} if it does not admit an unknot closure.  Krebes raised the question \cite{Krebes}*{Section 14} of whether or not a specific 1-tangle, the ``Krebes 1-tangle'', shown in Figure \ref{Fig:Krebes} is persistent. Using a variation of the techniques used in the proof of Theorem \ref{Main Thm}, we show the following:

\begin{theorem}\label{Krebes thm}
The Krebes 1-tangle is persistent.
\end{theorem}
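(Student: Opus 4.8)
The plan is to argue by contradiction and to use Theorem \ref{Main Thm} to cut the problem down to a single, well-understood family of closures. Suppose $K(\phi)$ were an unknot closure of the Krebes 1-tangle $(V,\psi)$. The first step is to check that $(V,\psi)$ is nontrivial, which I expect to read off the diagram in Figure \ref{Fig:Krebes} directly, for instance by exhibiting an incompressible, $\boundary$-incompressible surface in the exterior $V \setminus \nbhd(\psi)$ so that $\psi$ is not $\boundary$-parallel. Granting this, Theorem \ref{Main Thm} forces $\phi$ to be trivial in $W$ (Lemma \ref{trivial}) and leaves at most two isotopy classes of unknotting arcs, with wrapping index $1$ if there are two. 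It therefore suffices to rule out unknot closures coming from trivial arcs.

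Next I would describe the trivial arcs concretely. A trivial arc is isotopic into the Heegaard torus $S$, and two such arcs are isotopic in $W$ rel $\boundary\psi$ exactly when they differ by copies of the meridian of $W$ (which can be unwound across the meridian disk). Hence the trivial arcs form a single $\Z$-indexed family $\{\phi_n\}$, and $K(\phi_{n+1})$ differs from $K(\phi_n)$ by an annulus twist, exactly the operation studied in the application of the abstract. The wrapping index of consecutive trivial arcs should be $1$, matching the conclusion $\omega(\phi_1,\phi_2)=1$ of Theorem \ref{Main Thm}; thus at most two integers $n$ can give the unknot, and any two that do are consecutive.

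The crux is to show that in fact no $n$ gives the unknot. Here the determinant is of no help, since Krebes's computation gives $\det K(\phi)=\pm1$, so the argument must be three-dimensional, in the spirit of the proof of Theorem \ref{Main Thm}. I would aim to realize each $K(\phi_n)$ as a band sum in which one summand carries the knotting of $\psi$, and then invoke the Gabai--Scharlemann band-sum theorems: superadditivity of genus under band sum shows that a band sum is unknotted only in degenerate situations, which a taut sutured-manifold structure on $V\setminus\nbhd(\psi)$ rules out for the Krebes tangle. Equivalently, one can feed the two-closure case into the construction of Section \ref{example}, checking that the Krebes tangle is not of the special form that admits two unknot closures, and then dispatch the remaining single-closure case with the same sutured-manifold input.

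The main obstacle I anticipate is uniform control of the twist parameter: Theorem \ref{Main Thm} bounds the number of unknotting arcs but does not locate them, so the real work is to show that the genus/tautness obstruction survives every twist $\phi_n$ simultaneously rather than for one closure at a time. Concretely, I expect the delicate point to be verifying that twisting along the annulus never produces a compressing or $\boundary$-compressing disk for the surface certifying the knotting of $\psi$; this is exactly where the particular incompressibility of the Krebes tangle's exterior must be exploited, and is the step most likely to demand a careful, diagram-specific case analysis.
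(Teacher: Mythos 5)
The decisive gap is in your crux step. You propose to realize each trivial closure $K(\phi_n)$ as a band sum ``in which one summand carries the knotting of $\psi$'' and then apply the Gabai--Scharlemann band-sum theorems. No such presentation exists: $K(\phi_n)=\psi\cup\phi_n$ is not a band sum in any natural way, and in this paper the band-sum structure (Lemma \ref{bubbles}, Case 1 of Proposition \ref{Seifert business}) runs in the opposite logical direction. One \emph{assumes} the hypothetical unknot closure $K(\phi_1)$ exists, puts the disc it bounds in normal position, and---in one of two cases---exhibits a second, independently known closure $K(\phi_2)$ as a nontrivial band sum of the unknot $K(\phi_1)$ with an auxiliary knot $\kappa$; the contradiction then comes from known properties of $K(\phi_2)$, not of the unknot. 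So the method requires an anchor closure of the Krebes tangle whose knot type is understood: the paper uses the closure that is the figure-eight knot, exploiting that it has genus one, is fibered, and is not slice. Your proposal has no anchor, and without one the sutured-manifold input (Theorem \ref{Taylor thm}, packaged as Proposition \ref{Seifert business}) cannot be invoked at all: it compares \emph{two} closures and concludes the second is knotted; it cannot rule out a single closure in isolation, let alone all $\phi_n$ ``simultaneously.''

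Even with an anchor in place, two cases remain that your plan does not address. First, Theorem \ref{Main Thm} permits two unknot closures at wrapping index one (Section \ref{example} shows this genuinely occurs), so wrapping index one from the anchor must be excluded by a separate argument; your suggestion to check that the Krebes tangle ``is not of the special form'' of Section \ref{example} is circular, since being of that form is equivalent to admitting two unknot closures. The paper instead 3-colors the 2-tangle induced by the preferred longitude of the figure-eight closure, so every closure of odd wrapping index from it is 3-colorable and hence knotted. Second, conclusion (2) of Proposition \ref{Seifert business}---a minimal genus Seifert surface of the anchor closure isotopic off the cocore $\beta$---must also be contradicted; this is where fiberedness of the figure-eight knot, Lemma \ref{disjoint arc}, and the fundamental-group argument that a meridian is never homotopic into the image of $\pi_1$ of a Seifert surface all enter, and nothing in your outline plays this role. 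Two smaller points: your $\Z$-indexed classification of trivial arcs is plausible but unproved and is never needed (the paper never enumerates the trivial arcs), and your determinant remark is inaccurate---by Abernathy's result the determinant obstruction fails only for closures of even wrapping index, which is exactly why it cannot finish the problem.
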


\begin{figure}[ht!]
\centering
\includegraphics[scale=0.5]{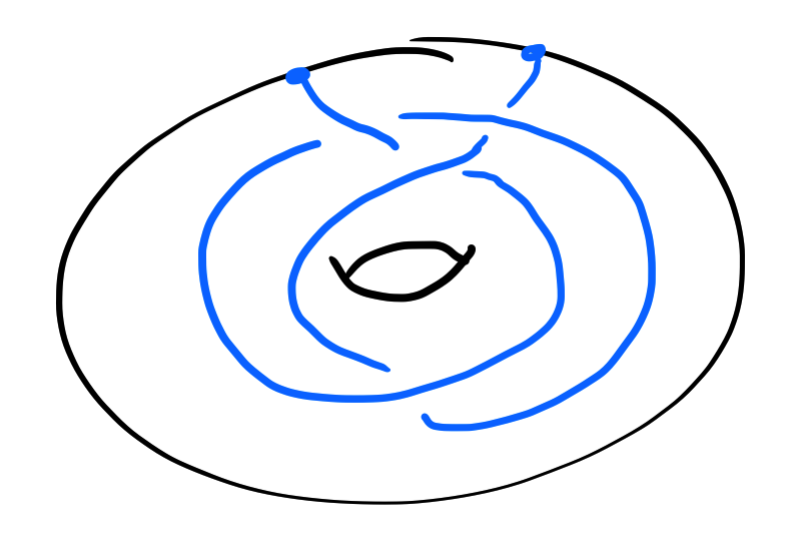}
\caption{The Krebes 1-tangle}
\label{Fig:Krebes}
\end{figure}

Previously, substantial, but partial, progress towards showing the persistence of the Krebes 1-tangle was made by Abernathy \cite{Abernathy} using branched double cover techniques, based on those in \cite{Ruberman}. She classified closures as ``even'' or ``odd'' and proved that if there is an unknot closure it must be even. In Section \ref{krebes section}, we observe that there is a very simple 3-coloring proof of that result. Abernathy and Gilmer \cite{AG} used skein techniques to show that 1-tangles to similar to Krebes' 1-tangle do not have unknot closures. Schreiber \cite{Schreiber} randomly generated closures of the Krebes 1-tangle and checked if they were the unknot. 

We also apply Theorem \ref{Main Thm} to the study of annulus twists on knots. Definition \ref{def: annulus twist} below defines what it means to twist a knot $K$ around an annulus $A$. Twisting $K$ $q$-times around $A$ produces a knot $K_q$; the sign of $q$ indicates the direction of the twist. Annulus twists are an effective technique for creating knots with certain types of properties (for example, \cites{AJLO, AT, DMM, BGL}) and substantial work has been done, under various hypotheses on $A$, on the asymptotics of certain invariants of $K_q$ (the knot obtained by twisting $K$ $q$ times using $A$) as $q \to \infty$. However, there seems to be no published work on the case when $A$ is unknotted. 

\begin{theorem}\label{Annulus Twist Theorem}
Suppose that $K_0 \subset S^3$ is an unknot and that $A \subset S^3$ is an unknotted sufficiently incompressible twisting annulus (Definition \ref{def: annulus twist}) for $K_0$ such that $|A \cap K_0| = 1$. Then there is at most one $q\neq 0$ such that $K_q$ is the unknot and, if $K_q$ is the unknot, then $q = \pm 1$.
\end{theorem}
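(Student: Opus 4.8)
The plan is to reinterpret the $q$-fold annulus twist as a change of the closure arc for a \emph{fixed} 1-tangle and then invoke Theorem \ref{Main Thm}. Let $W$ be a regular neighborhood of the twisting annulus $A$ and let $V = S^3 \setminus \interior{W}$; since $A$ is unknotted, $V$ and $W$ are unknotted solid tori meeting in a torus $S$, so $V \cup_S W$ is a genus 1 Heegaard splitting of $S^3$. Because $|A \cap K_0| = 1$, the arc $\psi = K_0 \cap V$ is a 1-tangle and $\phi_0 = K_0 \cap W$ is a complementary arc with $K_0 = K(\phi_0)$. The first task is to show that $(V,\psi)$ is nontrivial, so that Theorem \ref{Main Thm} applies. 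I would extract this directly from the hypothesis that $A$ is sufficiently incompressible (Definition \ref{def: annulus twist}): incompressibility should preclude $\psi$ from being isotopic rel $\boundary\psi$ into $\boundary V$, which is exactly nontriviality of the pair.

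Next I would set up the correspondence between twisting and closures. The annulus twist is supported near $A$, hence does not alter the 1-tangle $(V,\psi)$, and twisting $K_0$ a total of $q$ times yields $K_q = K(\phi_q)$, where $\phi_q \subset W$ is obtained from $\phi_0$ by dragging it $q$ times around the core of $W$ (equivalently, by $q$ meridional twists of $W$). Thus the entire family $\{K_q\}$ is realized as closures of the single nontrivial pair $(V,\psi)$. The crux of the argument is the computation $\omega(\phi_0, \phi_q) = |q|$, and more generally $\omega(\phi_{q_1}, \phi_{q_2}) = |q_1 - q_2|$ for any two such arcs: the wrapping index must detect precisely the number of twists. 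I expect this to be the main obstacle, since it requires unwinding the definition of $\omega$ and verifying that each additional annulus twist increases the wrapping of $\phi_q$ about $W$ by exactly one, uniformly in $q$. A byproduct is that distinct values of $q$ for which $K_q$ is the unknot give non-isotopic arcs $\phi_q$: all such arcs are trivial by Theorem \ref{Main Thm}, so $\omega$ is defined on them, yet $\omega(\phi_{q_1},\phi_{q_2}) = |q_1 - q_2| \neq 0$ when $q_1 \neq q_2$.

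Finally I would assemble the conclusion. Since $K_0$ is the unknot and $(V,\psi)$ is nontrivial, Theorem \ref{Main Thm} (see also Lemma \ref{trivial}) shows that $\phi_0$ is trivial and that, up to isotopy rel $\boundary\psi$, there are at most two arcs in $W$ with unknot closure. Whenever $K_q$ is the unknot, $\phi_q$ is one of these, and, by the injectivity just noted, distinct such $q$ give distinct arc classes; hence at most two values of $q$ yield the unknot. One of them is $q = 0$, so there is at most one $q \neq 0$ with $K_q$ unknotted. For such a $q$, Theorem \ref{Main Thm} gives $\omega(\phi_0, \phi_q) = 1$, and combined with the computation $\omega(\phi_0,\phi_q) = |q|$ this forces $|q| = 1$, that is, $q = \pm 1$, as claimed.
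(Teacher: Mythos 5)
Your outline is, in structure, exactly the paper's own proof: the same splitting of $S^3$ into $W = \nbhd(A)$ and its complement $V$, the same reinterpretation of $K_q$ as the closure $K(\phi_q)$ of the fixed 1-tangle $(V,\psi)$, the same appeal to Theorem~\ref{Main Thm}, and the same final counting. Moreover, the step you single out as "the main obstacle" is not one: the computation $\omega(\phi_{q_1},\phi_{q_2}) = |q_1-q_2|$ is precisely what Lemma~\ref{coilers calculation} was built for. One only needs to observe that $\phi_{q_1}\cup\phi_{q_2}$ can be isotoped, rel $\partial\psi$, onto the torus $S$ as an essential simple closed curve whose class in $H_1(W)$ is $\pm(q_1-q_2)$ times the class of a meridian of $V$; Lemma~\ref{coilers calculation} then gives your formula, which is indeed the one the paper's final deduction uses.

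The genuine gap is the step you propose to ``extract directly'' from the hypothesis: the nontriviality of $(V,\psi)$. Sufficient incompressibility (Definition~\ref{def: annulus twist}) is a statement about compressing discs for $P\setminus K_0$ meeting $\partial A$ in more than two points, whereas nontriviality of $(V,\psi)$ is a statement about $\psi$ not being parallel into $\partial V$; these are not tautologically equivalent, and bridging them is the \emph{only} place in the entire argument where the hypothesis is engaged, so a proof that treats it as immediate has not actually used the hypothesis. The paper's argument runs as follows: if $(V,\psi)$ were trivial, then $\circ{S}$ would compress in $N = V\setminus\nbhd(\psi)$ via some disc $\Delta$; isotope $\Delta$ to meet $P_V = P\cap V$ minimally, where $P_V$ consists of two discs, each punctured by $K_0$ because neither component of $\partial A$ bounds a disc in the knot complement; incompressibility of $P_V\setminus K_0$ (itself a consequence of sufficient incompressibility, since a compressing disc with boundary in $P_V$ avoids $\partial A$ entirely) rules out circles of intersection; finally, an outermost arc of $\Delta\cap P_V$ in $\Delta$ cuts off a subdisc which, after its boundary arc is extended through the product structure $W \cong A\times I$ to lie on $A$, becomes a compressing disc for $P\setminus K_0$ meeting $\partial A$ in exactly two points, contradicting sufficient incompressibility. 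Some argument of this kind, converting a hypothetical parallelism of $\psi$ into $\partial V$ into a low-intersection compressing disc for $P\setminus K_0$, is required before Theorem~\ref{Main Thm} can be invoked; everything after that point in your proposal is correct.
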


Figure \ref{Fig:Annulus Twist} shows an example of an annulus twist on an unknot that produces another unknot. Note the connection to the 1-tangle closures in Figures \ref{Fig:Samples}.C and \ref{Fig:Samples}.D 

\begin{figure}[ht!]
\centering
\includegraphics[scale=0.5]{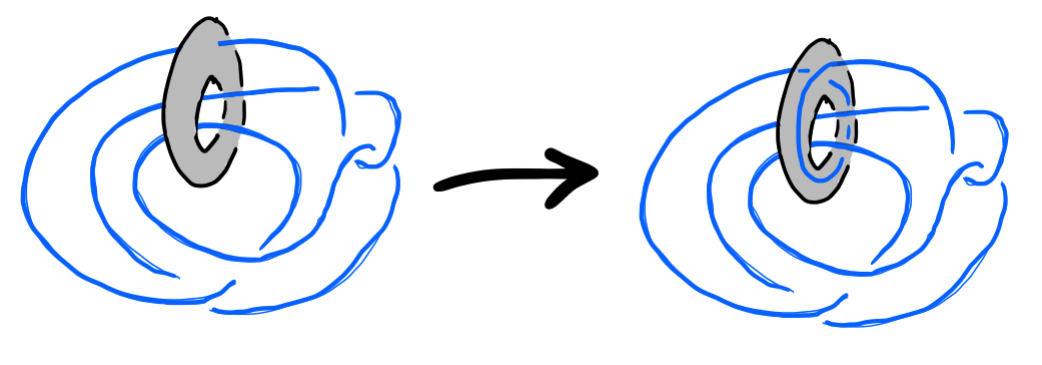}
\caption{An example where $K_0$ and one of $K_{\pm 1}$ are both unknots; it is another version of Figure \ref{Fig:Samples}.C and \ref{Fig:Samples}.D. By Theorem \ref{Annulus Twist Theorem}, if we had twisted more than once or in the opposite direction, we would not have the unknot.}
\label{Fig:Annulus Twist}
\end{figure}

Theorem \ref{Annulus Twist Theorem} has a pleasing interpretation we call the ``Coiler's Conundrum''. As in Figure \ref{fig: coilerscord}, a lazy householder takes a tangled extension cord, wraps it into a coil and plugs the ends into themselves. If the resulting knot is the unknot, would it still be the unknot if one end were first wrapped through the coil $q \neq 0$ times? Theorem \ref{Annulus Twist Theorem} shows that if $|q| \geq 2$ and the original cord was sufficiently tangled, the resulting knot will not be the unknotted. Figure \ref{Fig:Annulus Twist} shows that if $q = \pm 1$, it is possible to obtain the unknot.

\begin{figure}[ht!]
\centering
\includegraphics[scale=0.7]{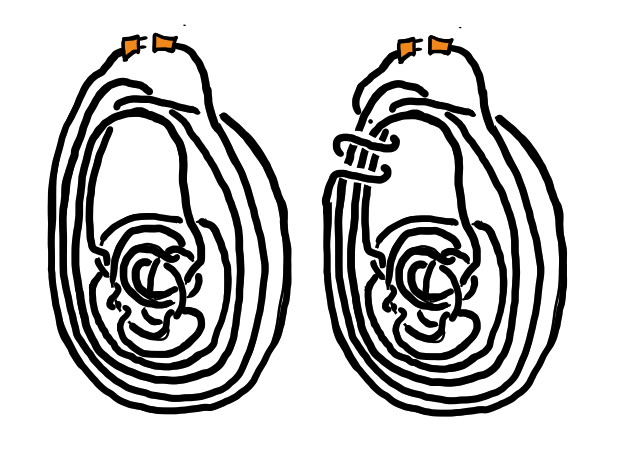}
\caption{The Coiler's Conundrum: could both be unknots?}
\label{fig: coilerscord}
\end{figure}

\subsection{Notation}

We work in the PL or smooth category. We let $I = [0,1] \subset \R$. All surfaces are compact and orientable; points of intersection with a 1-manifold will be termed \defn{punctures}. The notation $\nbhd(X)$ will denote an open regular neighborhood of some $X$ embedded in some other space $Y$, which we will always specify when there is the possibility of ambiguity. The notation $\interior(X)$ denotes the interior of a manifold $X$. For a knot $K$, $g(K)$ denotes its Seifert genus.

If $\delta$ is a properly embedded arc in an annulus or pair-of-pants $A$, we say it is \defn{spanning} if the endpoints of $\delta$ are on distinct components of $\boundary A$ and \defn{returning} if they are on the same component. A returning arc in an unpunctured annulus is necessarily inessential.

If $U$ is a 3-manifold with boundary and if $c \subset \boundary U$ is a simple closed curve, we let $U[c]$ denote the result of attaching a 2-handle to $\boundary U$ along $c$. The cocore of the 2-handle is a properly embedded arc in $U[c]$; we will always denote that arc by $\beta$. If $X \subset U$ is a properly embedded surface such that some components $\boundary_c U$ are curves parallel to $c$, in $U[c]$ we can cap them off with discs (each intersecting the cocore $\beta$ exactly once). We let $X[c]$ denote the capped off surface. When the curve $c$ is nonseparating, we will also have occasion to consider a curve $c^* \subset \boundary U$ cutting off a genus one subsurface of $\boundary U$ containing $c$. If the surface $X \subset U$ also has boundary components $\boundary_{c^*} X$ parallel to $c^*$. We will generally need to be very careful in how we handle the curves $\boundary_{c^*} X$. 

Our proof (beginning in Section \ref{final proof}) relies heavily on the author's Theorem \ref{Taylor thm} applying sutured manifold theory to questions of 2-handle addition. We will assume a small amount of familiarity with sutured manifold theory terminology as developed in \cites{Gabai1, Gabai2, Gabai3, Scharlemann}, in particular the definitions of ``taut sutured manifold'', ``norm-minimizing'', ``conditioned decomposing surface'', and ``taut hierarchy''. In this paper we only apply previous sutured manifold theory theorems; we do not need to develop any of the theory. We also need the definition of ``Thurston norm'' \cite{Thurston-norm} which we provide for convenience:

\begin{definition}
If $S$ is a compact connected orientable surface, the \defn{Thurston norm} $x(S)$ of $S$ is equal to $\max(0, -\chi(S))$. The Thurston norm of a compact orientable surface is the sum of the Thurston norms of its components. If $M$ is a compact, orientable 3-manifold and if $U \subset \boundary M$ and $y \in H_2(M,U; \Z)$ is a nontrivial class, the Thurston norm $x(y)$ of $y$ is the minimum of $x(S)$ over all compact, orientable surfaces representing $y$. We extend $x$ to a seminorm on $H_2(M,\boundary M;\R)$. The Thurston norm $x(M)$ of $M$  is the minimal Thurston norm of any non-null homologous properly embedded surface in $M$. 
\end{definition}

\subsection{Outline}

In Sections \ref{sec: 2-tangle} and \ref{sec: 1-tangle} we provide background on 2-tangles and 1-tangles, respectively. The material on 2-tangles is standard, but some of the material on 1-tangles is new. In particular, we define a ``wrapping index'' $\omega(\phi_1, \phi_2)$ between two complementary 1-tangles (requiring $\phi_1$ to be trivial). This measure is implicit in \cite{Abernathy}, but we consider some of its properties in some detail. Section \ref{example} explains the construction of inequivalent complementary 1-tangles giving unknot closures of a nontrivial 1-tangle. Section \ref{annulus twists} exhibits a connection between certain unknotted annulus twists and 1-tangle closures. Theorem \ref{Annulus Twist Theorem} is proved from Theorem \ref{Main Thm}. Section \ref{basic props} establishes some basic properties of two trivial complementary 1-tangles $(W, \phi_1)$ and $(W, \phi_2)$ where $\omega(\phi_1, \phi_2) \leq 1$ and proves a version of Theorem \ref{Main Thm}  wrapping index is at most one. Section \ref{band sums} concerns another special case: when, after pushing the arc $\phi$ of a complementary 1-tangle giving an unknot closure into $V$, the unknot $K(\phi)$ bounds a disc lying entirely in $V$. The proof is accomplished by appealing to classical results on band sums. In Section \ref{final proof}, the proof of Theorem \ref{Main Thm} is completed by examining the case when $\omega(\phi_1, \phi_2) \geq 2$.  We then conclude by showing that the Krebes tangle is persistent.

\section{Background on 2-tangles}\label{sec: 2-tangle}

We briefly review some standard terminology and background. 

\begin{definition}
A pair $(B, \tau)$ is a \defn{2-tangle}\footnote{many authors would say just ``tangle,'' but some would say``4-tangle''} if $B$ is a 3-ball and $\tau \subset B$ is a properly embedded pair of arcs. Two 2-tangles are \defn{equivalent} if they are homeomorphic as pairs. A 2-tangle is \defn{trivial} if the arcs are isotopic (relative to their endpoints) into $\boundary B$. We also say that a component $\tau_0 \subset \tau$ is \defn{unknotted}, if it is isotopic (relative to its endpoints) into $\boundary B$ (ignoring the other component). Two tangles $(B, \tau)$, and $(B, \kappa)$ are \defn{rationally equivalent} if $\tau$ is isotopic to $\kappa$ by a proper isotopy relative to $\boundary B$. An equivalence class of a trivial tangle under rational equivalence is a \defn{rational tangle}.
\end{definition}

At least since Conway's enumeration of knots of low crossing number \cite{Conway}, 2-tangles and their closures have played a central role in understanding knots.   Famously, for a given 3-ball with four marked points in its boundary, Conway used $\Q \cup \{\infty\}$ to parameterize rational tangles whose endpoints are those four marked points. Figure \ref{2-tangles} depicts a 2-tangle and a closure.

\begin{definition}
Suppose $(B, \tau)$ is a 2-tangle with $B \subset S^3$ and let $B'$ be the closure of the complement of $B$. If $(B', \tau')$ is a 2-tangle with $\boundary \tau' = \boundary \tau$, we say that $(B', \tau')$ is a \defn{complementary 2-tangle} to $(B, \tau)$ and that the knot or 2-component link $\tau \cup \tau'$ is a \defn{closure} of $(B, \tau)$. If $(B', \tau')$ is a (representative of a) rational tangle, then $\tau \cup \tau'$ is a \defn{rational closure} of $(B, \tau)$.
\end{definition}

\begin{figure}[ht!]
\labellist
\small\hair 2pt
\pinlabel {$(B,\tau)$} at 171 56
\pinlabel {$(B', \tau')$} [bl] at 308 290
\endlabellist
\centering
\includegraphics[scale=0.4]{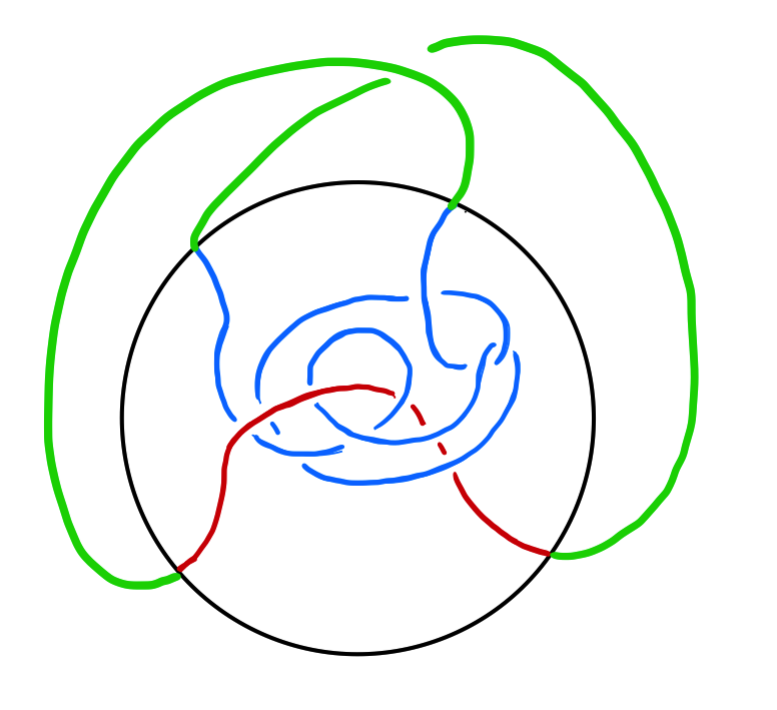}
\caption{The blue and red arcs $\tau$ are contained in a 3-ball $B$ whose boundary sphere is drawn as a black circle, making $(B, \tau)$ a 2-tangle. The green arcs $\tau'$ are contained in complementary 3-ball $B' = B\setminus \interior(B)$, making $(B', \tau')$ a complementary 2-tangle. The union of the blue, red, and green arcs is a rational closure of $(B, \tau)$. It is the unknot. Observe the similarity to Figure \ref{Fig:Samples}.D.}
\label{2-tangles}
\end{figure}

Drawing on the classification of lens spaces, Conway classified which rational closures of which rational tangles give rise to equivalent knots.  In particular, in terms of his parameterization of rational tangles it is completely known which closures of a specific rational tangle produce the unknot. 

\begin{definition}
A 1-manifold (possibly with boundary) properly embedded in a 3-manifold has a \defn{local knot} if it intersects some ball $B$  in a single arc that is not isotopic (in the ball) into $\boundary B$. A 2-tangle $(B, \tau)$ is \defn{prime} if it is nontrivial and $\tau$ does not contain a local knot.
\end{definition}

Bleiler and Scharlemann \cites{BS1, BS2} proved the following theorem; the statement we give is from \cite{EM}. It equivalent to the fact that strongly invertible knots have unique complements \cite{GL}, via the Montesinos trick \cite{Montesinos}. In a different direction, it is also a consequence of \cite{Taylor-2h}*{Theorem 6.5}. This result, and results inspired it (especially those relating to primeness of unknotting number one knots), have turned out to be very useful in knot theory.

\begin{theorem}[Bleiler-Scharlemann]\label{2-tangle thm}
Suppose that $(B, \tau)$ is a prime 2-tangle, then, up to isotopy relative to $\boundary B$, there is at most one complementary 2-tangle $(B', \tau')$ such that $\tau \cup \tau'$ is the unknot; such a complementary 2-tangle must be rational.
\end{theorem}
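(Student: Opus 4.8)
The plan is to pass to double branched covers and convert the statement into a uniqueness statement about Dehn surgeries yielding $S^3$, so that it becomes an equivariant instance of the Gordon--Luecke knot complement theorem (the ``Montesinos trick''). Write $M = \Sigma_2(B,\tau)$ for the double cover of $B$ branched over $\tau$, and let $\sigma$ be the covering involution, so that $B = M/\sigma$ and $\tau$ is the image of the fixed-point set. Because $\boundary B \cap \tau$ consists of four points, $\boundary M$ is the torus $T$ double covering $\boundary B = S^2$ branched over those four points. The first task is to record the consequences of primeness: that $(B,\tau)$ is nontrivial and locally unknotted forces $M$ to be irreducible and $\boundary$-irreducible and, crucially, not a solid torus. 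I would prove this with standard equivariant Smith-theoretic arguments: a reducing sphere or a compressing disc for $M$ can be made $\sigma$-invariant and descends to a sphere or disc in $B$ exhibiting either a local knot or triviality of the tangle, contradicting primeness.

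Second, I would set up the dictionary between complementary tangles and fillings. A rational tangle $(B',\tau')$ has double branched cover a solid torus, and the closure's double branched cover $\Sigma_2(\tau \cup \tau')$ is obtained by gluing that solid torus to $M$ along $T$, i.e.\ by Dehn filling $M$ along the slope $\gamma$ carried by the rational tangle; distinct rational tangles (up to isotopy rel $\boundary B'$) correspond to distinct slopes on $T$. Combined with the fact that $\Sigma_2(K) = S^3$ if and only if $K$ is the unknot (the relevant case of the Smith conjecture), a rational closure is the unknot precisely when $M(\gamma) = S^3$.

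Third, two things remain. First, any complementary tangle giving the unknot is automatically rational: if $\tau \cup \tau'$ is the unknot then $S^3 = \Sigma_2(\tau\cup\tau')$ is obtained by gluing $N := \Sigma_2(B',\tau')$ to $M$ along the incompressible torus $T$; since $S^3$ is irreducible, $T$ must compress in $N$, and an equivariant analysis of the compression shows $N$ is a solid torus, whence $(B',\tau')$ is rational. Second, suppose two complementary tangles both close $(B,\tau)$ to the unknot; by the previous step both are rational, giving slopes $\gamma_1, \gamma_2$ on $T$ with $M(\gamma_1) = M(\gamma_2) = S^3$. Since $M$ is not a solid torus, it is the exterior of a nontrivial knot in $S^3$, so the Gordon--Luecke theorem \cite{GL} forces the only $S^3$-filling slope to be the meridian; hence $\gamma_1 = \gamma_2$ and the two rational tangles coincide.

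The main obstacle, and the reason the honest reference is ``strongly invertible knots have unique complements,'' is that uniqueness of the slope \emph{upstairs} must be upgraded to uniqueness of the \emph{tangle downstairs}: the homeomorphism of knot exteriors provided by Gordon--Luecke need not respect the involution $\sigma$, so one really needs the equivariant form of the complement theorem to conclude that $\gamma_1 = \gamma_2$ as $\sigma$-invariant slopes and that the resulting identification descends to an isotopy of tangles rel $\boundary B$. Establishing this equivariance, together with the primeness-to-irreducibility lemma, is where the genuine work lies; the sutured-manifold route via \cite{Taylor-2h}*{Theorem 6.5} offers an alternative that sidesteps the branched cover entirely.
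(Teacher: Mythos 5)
The paper does not actually prove this statement: it is imported from Bleiler--Scharlemann \cites{BS1,BS2} (with the phrasing taken from \cite{EM}), and the surrounding text only remarks that it is equivalent, via the Montesinos trick \cite{Montesinos}, to the fact that strongly invertible knots have unique complements \cite{GL}, or alternatively follows from \cite{Taylor-2h}*{Theorem 6.5}. Your proposal fleshes out exactly the branched-cover route that the paper alludes to, and its skeleton is sound: primeness of $(B,\tau)$ is equivalent to irreducibility and $\boundary$-irreducibility of $M=\Sigma_2(B,\tau)$ (equivariant sphere and disc theorems, as in Lickorish's work on prime tangles); an unknot closure gives $S^3 = M \cup_T N$ with $T$ incompressible in $M$, so the solid torus theorem forces $N$ to be a solid torus and hence $(B',\tau')$ to be rational; two unknot closures give two $S^3$-fillings of the \emph{same} manifold $M$, which Gordon--Luecke rules out unless the slopes coincide; and your dictionary item that distinct rational tangles give distinct slopes converts slope equality into tangle equality rel $\boundary B'$.

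Your final paragraph, however, misdiagnoses where the remaining work lies, and the misdiagnosis matters because you present it as a gap in your own argument. No equivariant form of the complement theorem is needed. You never compare two different knot exteriors, so there is no homeomorphism that must be made to respect $\sigma$: both closures yield Dehn fillings of the single manifold $M$ along slopes $\gamma_1,\gamma_2$ on the single torus $T$, and the surgery formulation of \cite{GL} (nontrivial surgery on a nontrivial knot never yields $S^3$) applies verbatim --- the core of the $\gamma_1$-filling is a nontrivial knot with exterior $M$ and meridian $\gamma_1$, so $M(\gamma_2)\cong S^3$ forces $\gamma_2=\gamma_1$ as isotopy classes of curves on $T$. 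The descent to tangles is then purely the injectivity of the map from rational tangles to slopes (Conway's classification), carried out downstairs; no upstairs isotopy or homeomorphism has to descend. The places where genuinely equivariant three-manifold topology enters are instead the two steps you treat lightly: the primeness-to-irreducibility lemma (which you flag correctly), and the word ``whence'' in ``$N$ is a solid torus, whence $(B',\tau')$ is rational'' --- that step requires the equivariant Dehn's lemma or the Kim--Tollefson classification of involutions of the solid torus, not Gordon--Luecke. (Note also that only the easy direction of the Smith-conjecture dictionary is used: unknot implies the cover is $S^3$; the hard direction is never invoked.) With those standard citations in place, your first three paragraphs already constitute a complete proof.
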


Krebes \cite{Krebes} raised the problem of finding examples of 2-tangles which are \defn{persistent}, that is, which do not admit \emph{any} unknot closure. The quest to identify persistent 2-tangles was taken up by many authors (for example \cites{SiWi, SiWi2, KSW, PSW, Ruberman, CR, AERSS, KL}), using a variety of techniques including double branched covers, coloring theory, skein theory, and higher order linking invariants.  The Krebes' problem concerns prime 2-tangles with \emph{no} unknot closures, whereas the Bleiler-Scharlemann theorem concerns prime 2-tangles with \emph{with at least one} unknot closure; it shows that there is then exactly one.

\section{1-tangles}\label{sec: 1-tangle}

\subsection{Terminology and preliminary results}
\begin{definition}
    A \defn{1-tangle} $(V, \psi)$ consists of an unknotted solid torus $V$ embedded in $S^3$ and a properly embedded arc $\psi \subset V$. We consider two 1-tangles to be equivalent if they are related, in $V$, by an ambient proper isotopy of the arcs, relative to their endpoints. A 1-tangle $(V, \psi)$ is \defn{trivial} if $\psi$ is isotopic in $V$, relative to $\boundary \psi$, into $\boundary V$. 
 \end{definition}
 
We fix the following notation for the remainder of the paper.
 
 \begin{definition}
 Let $(V,\psi)$ be a fixed 1-tangle. Let $S = \boundary V$ and $\circ{S} = S \setminus \nbhd(\boundary \psi$); the surface $S$ is a torus and $\circ{S}$ is a genus 1 surface with two boundary components. Let $N = V \setminus \nbhd(\psi)$. A \defn{meridian of $V$} is an essential simple closed curve in $S$ bounding a disc in $V$. A \defn{meridian of $\psi$} is an essential simple closed curve in the annulus $\boundary N  \setminus \circ{S}$. A \defn{preferred longitude} is an essential nonseparating simple closed curve $\lambda$ in $\circ{S}$ such that $\lambda$ bounds a disc in $W$. (Any two preferred longitudes are isotopic in $S$; though they may not be isotopic in $\circ{S}$.)

Let $W$ be the closure of the complement of $V$ in $S^3$; it is a solid torus. A 1-tangle $(W, \phi)$ with $\boundary \phi = \boundary \psi$ is a \defn{complementary 1-tangle} to $(V, \psi)$. The knot $K(\phi) = \phi \cup \psi$ is a \defn{closure} of $(V, \psi)$. 
 \end{definition}
 
 Figure \ref{Fig:Samples} depicts some choices for 1-tangles $(V,\psi)$ and complementary 1-tangles $(W,\phi)$, all giving unknot closures. 
 
 \begin{lemma}\label{trivial}
 Suppose that $(V, \psi)$ is a nontrivial 1-tangle and that $(W,\phi)$ is a complementary 1-tangle such that $K(\phi)$ is the unknot. Then $(W, \phi)$ is trivial.
 \end{lemma}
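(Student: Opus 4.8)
The plan is to use an unknotting disc for $K(\phi)$ together with standard innermost-disc arguments to locate a meridian disc of $W$ disjoint from $\phi$, the existence of which forces $\phi$ to be boundary-parallel.

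Since $K(\phi)$ is the unknot, it bounds an embedded disc $D \subset S^3$. First I would isotope $D$ to meet $S$ transversally and, among all such discs, choose one minimizing the number of circle components of $D \cap S$. Because $\partial D = K(\phi)$ meets $S$ in exactly the two points $\partial \psi$, the intersection $D \cap S$ consists of a single arc $\alpha$ joining these points together with a collection of disjoint circles, each lying in the interior of $D$ and hence disjoint from $\psi$ and $\phi$. If some circle were inessential in $S$, then an innermost such one would cobound a ball with a subdisc of $S$ (using irreducibility of $S^3$), and an isotopy of $D$ across this ball would reduce the number of circles, contradicting minimality; so every circle of $D \cap S$ is essential in $S$.

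Next I would note that there must be at least one circle. The arc $\alpha$ cuts $D$ into two discs, with boundaries $\psi \cup \alpha$ and $\phi \cup \alpha$. If there were no circles, the first would lie entirely in $V$, exhibiting $\psi$ as isotopic rel $\partial\psi$ into $S$ and contradicting the nontriviality of $(V,\psi)$. So I fix a circle $c$ of $D \cap S$ innermost in $D$, bounding a subdisc $D_0 \subset D$ whose interior misses $S$; then $D_0$ lies in $V$ or in $W$, and since $c$ is essential, $D_0$ is a meridian disc of whichever solid torus contains it. (One may further observe that, as the circles of $D \cap S$ are disjoint and essential in the torus $S$ while a meridian of $V$ and a meridian of $W$ must meet, all the circles are in fact meridians of a single one of the two solid tori.) In either case $D_0$ is disjoint from the arc in its solid torus, since $c$ avoids $\partial \psi = \partial \phi$.

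Finally I would cut along $D_0$. If $D_0 \subset V$, cutting $V$ yields a ball $B$ with $\psi \subset B$ a single properly embedded arc; since $K(\phi)$ is the unknot it carries no local knot, so taking the ball to be $B$ shows $\psi$ is boundary-parallel in $B$, and a standard isotopy pushing the spanning boundary arc off the two copies of $D_0$ places it in $S$, exhibiting $(V,\psi)$ as trivial and contradicting the hypothesis. Therefore $D_0 \subset W$, and the identical argument applied to the ball $W \setminus \nbhd(D_0)$ containing $\phi$ shows that $\phi$ is isotopic rel $\partial\phi$ into $S$; that is, $(W,\phi)$ is trivial. The step requiring the most care is the analysis of $D \cap S$: eliminating inessential circles via irreducibility, converting an innermost essential circle into a meridian disc disjoint from the appropriate arc, and using the persistence of a local knot in $K(\phi)$ to pass from boundary-parallelism in the cut-open ball to triviality in the solid torus.
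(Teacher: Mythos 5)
Your proof is correct, but it takes a genuinely different route from the paper's. The paper never chooses a disc bounded by $K(\phi)$: it views the punctured torus $\circ{S}$ as a properly embedded genus-1 surface with two meridional boundary components in the exterior of $K(\phi)$, which is a solid torus. Since a solid torus contains no incompressible surface of positive genus, $\circ{S}$ compresses into $V \setminus \psi$ or into $W \setminus \phi$; compressing produces an annulus with meridional boundary, and such an annulus in the unknot exterior is $\boundary$-parallel to both sides, which makes the 1-tangle on the same side as the compressing disc trivial. Nontriviality of $(V,\psi)$ then forces that side to be $W$. You instead intersect an unknotting disc $D$ with $S$, use innermost-circle arguments to extract a meridian disc $D_0$ of $V$ or of $W$ disjoint from both arcs, rule out $D_0 \subset V$ by nontriviality of $(V,\psi)$, and then cut $W$ along $D_0$ and invoke the classical fact that the unknot has no local knots. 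What your argument buys is an explicit essential disc in $W$ disjoint from $\phi$ (closely related to the disc produced later in Lemma \ref{disc exists}); what the paper's buys is brevity, deferring all the work to the classification of incompressible surfaces in a solid torus.

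Two caveats on your write-up, neither fatal. First, eliminating circles of $D \cap S$ that are inessential in $S$ should be done by a disc swap rather than an isotopy: take such a circle innermost in $S$, bounding $E \subset S$ with interior disjoint from $D$, and replace the subdisc of $D$ it bounds by a push-off of $E$. Your ``isotopy across the ball'' is not quite right, since the ball bounded by the resulting sphere may contain other sheets of $D$; but because you minimized the number of circles over \emph{all} discs bounded by $K(\phi)$, the swap still contradicts minimality. Second, the fact that the unknot carries no local knot --- equivalently, that a ball meeting the unknot in a single arc meets it in a $\boundary$-parallel arc --- is where the real content of your final step lies. It is classical (primeness of the unknot plus the standardness of unknotted ball-arc pairs), but its usual proof is essentially the paper's argument one level down: the annulus $\boundary B \setminus \nbhd(K(\phi))$ is incompressible with meridional boundary in the solid-torus exterior, hence parallel to both sides. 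So the two proofs are closer than they first appear.
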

 \begin{proof}
 Suppose, that $K(\phi)$ is the unknot. The surface $\circ{S}$ is a genus 1 surface with two meridional boundary components on the boundary of the closure of $\nbhd(K(\phi))$. Since the exterior of $K(\phi)$ is a solid torus, $\circ{S}$ is compressible into either $V\setminus \psi$ or $W\setminus \phi$. Compressing $\circ{S}$ results in a surface with a component that is an annulus. Since $K(\phi)$ is the unknot, this annulus is $\boundary$-parallel to both sides, implying that the 1-tangle on the same side as the compressing disc is trivial. Since $(V,\psi)$ is nontrivial, $(W,\phi)$ must be trivial.
\end{proof}

 In a different direction, a division of a knot by a torus in $S^3$ into a trivial 1-tangle and trivial complementary 1-tangle is a \defn{(1,1)-position}.  A knot is a \defn{(1,1)-knot} if it has a (1,1)-position but is not the unknot or a 2-bridge knot (i.e. the closure of a rational 2-tangle by a rational 2-tangle). Various authors \cites{CK, Frias, CM, CMS} have found parameterizations of (1,1)-positions, and in \cite{CK}*{Corollary 4.9} the authors give a partial answer to the question of which parameters characterize the unknot (for one of the parameterization methods). 

Our main theorem shows that any nontrivial 1-tangle admits at most two complementary 1-tangles producing unknot closures; it is a 1-tangle analogue of Bleiler and Scharlemann's Theorem \ref{2-tangle thm}.  Our proof involves reframing the problem as a 2-handle addition problem. Here is an example of the relevance of that perspective.

\begin{lemma}\label{Jaco lem}
Suppose that $(V, \psi)$ is nontrivial and that $\psi$ does not have a local knot. Then $N$ is irreducible and $\boundary$-irreducible.
\end{lemma}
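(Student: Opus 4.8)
The plan is to prove the two assertions separately: irreducibility by a direct ball-swallowing argument inside the solid torus $V$, and boundary-irreducibility by confronting a hypothetical compressing disc with a meridian disc of $V$ and then running a case analysis in which the two hypotheses (nontriviality and no local knot) are each used exactly once.

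\textbf{Irreducibility.} First I would take a $2$-sphere $\Sigma \subset N$. Since $\Sigma \subset N \subset V$ and $V$ is a solid torus, hence irreducible, $\Sigma$ bounds a ball $B \subset V$. The key observation is that $B$ may be taken disjoint from $\psi$: the interior of $B$ cannot contain a point of $\partial V$ (any neighborhood of such a point meets $S^3 \setminus V$, which is not in $B$), so $\partial\psi \subset \partial V$ lies in $\overline{V \setminus B}$; since $\psi$ is connected and disjoint from $\Sigma = \partial B$, all of $\psi$ lies in $\overline{V \setminus B}$. After a small isotopy $B \subset N$, so every sphere in $N$ bounds a ball there. (Notice neither hypothesis is needed for this part.)

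\textbf{Boundary-irreducibility.} Suppose for contradiction that $\partial N$ is compressible, with compressing disc $D \subset N$, and write $\partial N = \circ{S} \cup A$, where $A$ is the annulus on the frontier of $\nbhd(\psi)$ with core a meridian $m$ of $\psi$. I would first record two facts. \emph{(i)} A meridian $m$ of $\psi$ bounds no disc in $N$: capping such a disc with a meridian disc of the tube $\nbhd(\psi)$ gives a sphere in $V$ meeting $\psi$ exactly once, impossible because both endpoints of $\psi$ lie in the single component $\partial V$, which lies on one side of that sphere. \emph{(ii)} If a meridian disc $P$ of $V$ is disjoint from $\psi$, then $\psi$ lies in the ball $V$ cut along $P$, and, having no local knot, is isotopic into the boundary sphere and hence into $\partial V$, making $(V,\psi)$ trivial, contrary to hypothesis. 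Now I would fix a meridian disc $P$ with $n=|\psi \cap P|$ minimal; by (ii), $n \geq 1$. Setting $P^* = P \cap N$ and minimizing $|D \cap P^*|$, innermost circles are removed using irreducibility, and an outermost arc of $D \cap P^*$ in $D$ yields a boundary-compression of $P^*$ that either is inessential (reducing $|D \cap P^*|$) or guides an isotopy of $\psi$ lowering $n$; either contradicts minimality. Thus $D \cap P^* = \emptyset$ and $\partial D \subset \circ{S}$.

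With $\partial D \subset \circ{S} \subset S$, the curve $\partial D$ bounds the disc $D$ in the solid torus $V$, so in $S$ it is either a meridian of $V$ or inessential. If it is a meridian, $D$ is a meridian disc disjoint from $\psi$, contradicting (ii). If it is inessential, it bounds a disc $E \subset S$ which (since $\partial D$ is essential in $\partial N$) contains at least one endpoint of $\psi$. If $E$ contains exactly one endpoint, $\partial D$ is isotopic in $\partial N$ to $m$, contradicting (i). If $E$ contains both, then $\Sigma_2 = D \cup E$ is a sphere meeting $\psi$ only in $\partial\psi$; it bounds a ball $B$, while the other side has boundary $(S \setminus E) \cup D$, a torus, so is a solid torus and not the ball—confirming $E \subset \partial B$. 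As $\psi$ leaves its endpoints into $\interior V$ on the $B$-side of $E$ and is connected and disjoint from $D$, we get $\psi \subset B$; no local knot then forces $\psi$ into $\partial B$ and hence into $\partial V$, again making $(V,\psi)$ trivial. This exhausts the cases.

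\textbf{The hard part} will be the reduction to $\partial D \subset \circ{S}$: a priori $\partial D$ may run essentially across the tube annulus $A$ (through the extra genus contributed by $\nbhd(\psi)$), and it is precisely the minimal-intersection argument against $P$ that must rule this out. Verifying that each outermost-arc exchange strictly decreases $|D \cap P^*|$ or $n$, and bookkeeping the boundary-compression cases correctly, is where the real work lies; by contrast the nontriviality hypothesis enters only through fact (ii) and the both-endpoints case, and the no-local-knot hypothesis enters only in converting ``$\psi$ lies in a ball'' into ``$\psi$ is boundary-parallel.''
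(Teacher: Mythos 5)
Your irreducibility argument is fine, and so is your endgame: once a compressing disc $D$ has $\boundary D \subset \circ{S}$, your three cases (meridian of $V$; disc in $S$ with one puncture; disc in $S$ with two punctures), together with your facts (i) and (ii), reproduce exactly the paper's direct proof that $\circ{S}$ is incompressible in $N$. The genuine gap is the reduction that is supposed to get you to $\boundary D \subset \circ{S}$, namely the claim that an outermost arc of $D \cap P^*$ that is essential in $P^*$ always ``guides an isotopy of $\psi$ lowering $n$.'' Write $D_0 \subset D$ for the outermost disc, $\boundary D_0 = \zeta \cup \delta$ with $\zeta \subset P^*$ and $\delta \subset \boundary N$, and let $m_1, \dots, m_n$ be the circles of $\boundary P^*$ on the tube $\boundary N \setminus \circ{S}$. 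If $\delta$ spans a middle annulus of the tube (so $\zeta$ joins consecutive punctures), you do get an isotopy of $\psi$ lowering $n$; and if $\zeta$ has both endpoints on $\boundary P$, the compression cuts $P$ into two discs, one of which is a meridian disc of $V$ with fewer punctures. But $\delta$ may instead lie in the component of $\boundary N \setminus \boundary P^*$ containing the scars of $\boundary P$ and the two end annuli of the tube: for instance $\zeta$ joining $\boundary P$ to $m_1$ with $\delta$ running across $\circ{S}$, or $\zeta$ joining $m_1$ to $m_n$. Boundary-compressing $P^*$ along such a $D_0$ produces a surface whose new boundary component is a band sum of $\boundary P$ with $m_1$ along $\delta$ (homologically $[\boundary P] \pm [\mu]$, where $\mu$ is a meridian of $\psi$): this curve is neither a meridian of $V$ nor a meridian of $\psi$, so it yields neither an isotopy of $\psi$ lowering $n$ nor a decrease in $|D \cap P^*|$; you have merely traded $D$ for a compressing disc of ``diagonal'' slope, and the induction stalls. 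This is not a peripheral case: when $n = 1$ (and fact (ii) only guarantees $n \geq 1$), $P^*$ is an annulus, every essential arc in it is spanning, and \emph{every} essential boundary-compression is of this problematic type. (There is also a smaller elision: $D \cap P^* = \nil$ only says $\boundary D$ misses $\boundary P^*$, so $\boundary D$ could still be the core of a middle annulus of the tube, i.e.\ a meridian of $\psi$; that case is dispatched by your fact (i), but it needs to be said.)

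What you are attempting to prove by hand at this point is a special case of the $2$-handle addition lemma, and that is precisely the tool the paper invokes to bypass the difficulty. The paper proves only what your endgame proves --- that $\circ{S}$, and hence $\boundary N \setminus \mu$, is incompressible in $N$ --- and then quotes Jaco: since $N[\mu] = V$ is a solid torus, hence $\boundary$-reducible, while $\boundary N \setminus \mu$ is incompressible, the contrapositive of Jaco's theorem forces $\boundary N$ to be incompressible. Your proposal is repaired by the same move: drop the confrontation with $P^*$ entirely, keep your final case analysis as the proof that $\boundary N \setminus \mu$ is incompressible, and cite Jaco's lemma for the rest. Without an input of that strength, the configurations above are exactly where the content of the lemma lives, and your sketch does not close them.
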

\begin{proof}
The fact that $N$ is irreducible follows immediately from the facts that $V$ is a solid torus and $\psi$ is an arc with endpoints on $S$. The surface $\circ{S}$ is incompressible in $N$; for otherwise, we could compress $S$ to a sphere in $V$ intersecting $\psi$ twice-transversally. Since $\psi$ does not contain a local knot, it is isotopic in the 3-ball $B \subset V$ bounded by that sphere into the sphere. We may arrange for it to miss the scars from the compression, and so $\psi$ would be isotopic in $V$ into $\boundary V$. But this contradicts the fact that $(V,\psi) $ is nontrivial. Thus, $\circ{S}$ is incompressible in $N$. 

Let $\mu \subset \boundary N$ be a meridian of $\psi$; it does not bound a disc in $N$. Since $\circ{S}$ is incompressible in $N$, so is $\boundary N \setminus \mu$. Since $N[\mu] = V$ is a solid torus (and is, therefore, $\boundary$-reducible), the contrapositive of the Jaco 2-handle addition lemma \cite{Jaco}*{Theorem 2} implies that $N$ is $\boundary$-irreducible.
\end{proof}

\begin{remark}
     Since $V$ is a solid torus in $S^3$, there is a unique isotopy class of essential closed curve on $S$ that bounds a compact, orientable surface in $W$. We call a curve in this isotopy class a \defn{preferred longitude} for $V$; this term will always be used reference to $V$ and never to $W$, so we will usually say simply ``preferred longitude''.  (For a closure $K(\phi)$, we do sometimes also need to refer to the preferred longitude on the boundary of the the closure of $\nbhd(K(\phi))$.) Although there is a unique isotopy class of preferred longitudes in $S$, there is not in $\circ{S}$. On the other hand, given an embedded arc $\epsilon \subset S$ joining the points $\boundary \psi$ and with interior disjoint from $\boundary \psi$, the surface $S\setminus \epsilon$ is a once-punctured torus. Thus, if $c_1, c_2$ are two essential simple closed curves in $S\setminus \epsilon$ that are isotopic in $S$, then they are also isotopic in $S\setminus \epsilon$. In what follows we will frequently apply this observation to preferred longitudes of $V$ and to simple closed curves in $S$ bounding discs containing $\epsilon$.
\end{remark}

\subsection{Wrapping index}\label{sec:wrapping index}

We use a variation of wrapping number to compare two complementary 1-tangles. See Figure \ref{fig: wrapping example} for some examples. We begin with a helpful technical lemma.

\begin{lemma}\label{disc exists}
Suppose that $(W, \phi)$ is a trivial complementary 1-tangle. Then, up to isotopy in $\circ{S}$, there is a unique preferred longitude bounding a disc in $W$ disjoint from $\phi$. Any two such discs are isotopic rel $\boundary$ in $W\setminus \phi$.
\end{lemma}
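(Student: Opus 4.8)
The plan is to show two things: (1) existence and uniqueness up to isotopy in $\circ{S}$ of a preferred longitude bounding a disc in $W$ disjoint from $\phi$, and (2) that any two such discs are isotopic rel $\boundary$ in $W\setminus\phi$. Since $(W,\phi)$ is trivial, by definition $\phi$ is isotopic in $W$ rel $\boundary\phi$ into $\boundary W = S$. Fix such an isotopy and let $\phi' \subset S$ be the resulting arc, so $\phi'$ together with an arc in $S$ bounds a bigon region. The key point is that pushing $\phi$ into $S$ lets us choose an embedded arc $\epsilon \subset S$ joining the two points of $\boundary\psi$ with $\epsilon$ "parallel" to the pushed-in $\phi$, and then a meridian disc of $W$ can be chosen to meet $S$ in a preferred longitude avoiding $\phi$.

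First I would establish existence. Since $W$ is a solid torus, take a meridian disc $D_0 \subset W$; its boundary is a preferred longitude of $V$. Because $(W,\phi)$ is trivial, isotope $\phi$ into $S$ to get $\phi'$, and then isotope $D_0$ (rel nothing, inside $W$) so that $\boundary D_0$ is disjoint from $\boundary\phi'$ and $D_0$ is disjoint from $\phi$: concretely, push $\phi$ slightly off $S$ into $W$ near $\phi'$ and choose $\boundary D_0$ to be a longitude in $\circ{S}$ running "around the other side" of $S$ from the pushed-in arc. This produces a disc $D$ in $W$ with $\boundary D \subset \circ{S}$ a preferred longitude and $D \cap \phi = \nil$. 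For uniqueness up to isotopy in $\circ{S}$, I would use the Remark immediately preceding the lemma: given an arc $\epsilon \subset S$ joining $\boundary\psi$ with interior disjoint from $\boundary\psi$ and parallel to $\phi'$, $S \setminus \epsilon$ is a once-punctured torus, and any two essential simple closed curves in $S\setminus\epsilon$ that are isotopic in $S$ are isotopic in $S\setminus\epsilon$. Since all preferred longitudes are isotopic in $S$ (the isotopy class is unique on the torus), and any longitude disjoint from $\phi$ can be isotoped off $\epsilon$, they are all isotopic within $\circ{S}$ after removing a neighborhood of $\epsilon \supset \boundary\psi$; this gives uniqueness in $\circ{S}$.

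For the second claim — that any two such discs $D_1, D_2$ are isotopic rel $\boundary$ in $W\setminus\phi$ — I would argue as follows. Having made $\boundary D_1$ and $\boundary D_2$ isotopic in $\circ{S}$, first isotope so that $\boundary D_1 = \boundary D_2 =: \lambda$. Then $D_1 \cup D_2$ is a sphere (or, if they intersect, we first minimize intersections). In the solid torus $W$, any two meridian discs with the same boundary are isotopic rel $\boundary$: this is the standard fact that the meridian disc of a solid torus is unique up to isotopy. The refinement here is to do this in the \emph{complement} $W \setminus \phi$. Since $\phi$ has been isotoped into $\boundary W$ (triviality), $W\setminus\nbhd(\phi)$ is still a solid torus (removing a boundary-parallel arc from a solid torus yields a solid torus), and $D_1, D_2$ become meridian discs of this solid torus with the same boundary; uniqueness of meridian discs in a solid torus then gives the isotopy rel $\boundary$ in $W\setminus\phi$.

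The main obstacle I expect is the careful bookkeeping of basepoints and the arc $\epsilon$: the uniqueness "up to isotopy in $\circ{S}$" is subtler than uniqueness in $S$ because $\circ{S}$ is a twice-punctured torus and isotopy classes of curves can split when one removes the punctures $\nbhd(\boundary\psi)$. The Remark is designed precisely to handle this, so the real work is to justify that the chosen longitudes can be made disjoint from a common arc $\epsilon$ through $\boundary\psi$, reducing the question to the once-punctured torus $S\setminus\epsilon$ where isotopy-in-$S$ implies isotopy-in-$(S\setminus\epsilon)$. The second statement, about discs in $W\setminus\phi$, is then comparatively routine once triviality is used to replace $W\setminus\phi$ by a solid torus, invoking the standard uniqueness of meridian discs.
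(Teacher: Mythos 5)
There is a genuine gap, and it sits exactly where you flagged it. Your uniqueness argument reduces everything to the claim that ``any longitude disjoint from $\phi$ can be isotoped off $\epsilon$,'' and you acknowledge this is ``the real work'' --- but you never do that work, and it cannot be skipped: it is essentially equivalent to the uniqueness statement itself. In $\circ{S}$ there are infinitely many isotopy classes of simple closed curves that become isotopic to the preferred longitude in $S$ (they differ by how they wind around the punctures $\boundary \psi$), and the whole content of the lemma is that only one such class can bound a disc in $W$ disjoint from $\phi$. To show that a longitude $\lambda'$ bounding a disc $\Delta' \subset W \setminus \phi$ can be pushed off $\epsilon$, you must bring the discs into play: compare $\Delta'$ (and a reference disc) with the triviality disc $D$ whose boundary is $\phi \cup \epsilon$, and run an innermost-circle/outermost-arc argument, using the resulting $\boundary$-compressions to contradict minimality of $|\lambda' \cap \epsilon|$ or of the disc intersections. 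This is precisely the paper's proof; your proposal defers its core step to a remark that only handles the easy final reduction (isotopic in $S$ and disjoint from $\epsilon$ implies isotopic in $S \setminus \epsilon$).

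There is also a concrete error in your second part: $W \setminus \nbhd(\phi)$ is \emph{not} a solid torus. Drilling a properly embedded arc out of a 3-manifold raises the boundary genus by one (a trivial arc in a ball yields a solid torus), so for a boundary-parallel arc $\phi$ in the solid torus $W$, the exterior $W \setminus \nbhd(\phi)$ is a genus-2 handlebody. Consequently ``uniqueness of meridian discs in a solid torus'' does not apply; indeed, in a genus-2 handlebody essential discs are far from unique. The statement you actually need --- and the one the paper uses --- is that in an \emph{irreducible} 3-manifold, two properly embedded discs with the same boundary are isotopic rel $\boundary$ (innermost-circle swaps remove interior intersections, and then the sphere they form bounds a ball). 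With that substitution, and after justifying that the boundary isotopy in $\circ{S}$ extends to an ambient isotopy of $W$ avoiding $\phi$, your second part goes through; but the first part still needs the disc-intersection argument supplied.
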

\begin{proof}
This is an innermost circle/outermost arc argument. Let $D \subset W$ be an embedded disc with boundary the union of $\phi$ and an arc $\epsilon \subset S$. Such a disc exists since $(W,\phi)$ is trivial. Let $\lambda \subset \circ{S}$ be a preferred longitude bounding a disc $\Delta \subset W$. Choose $\lambda$ to minimize $|\lambda \cap \epsilon|$. Subject to that, choose $\Delta$ to minimize $|D \cap \Delta|$, up to isotopy in $W$. The intersection $D \cap \Delta$ consists of arcs and circles. Since the interior of $D$ is disjoint from $\phi$ and $W\setminus \phi$ is irreducible, an innermost circle argument shows that the intersection consists only of arcs. Let $\zeta \subset D \cap \Delta$ be an arc of intersection that is outermost in $D$ and has endpoints on $\epsilon$; it bounds a subdisc $E \subset D$ with a subarc of $\boundary D$ and with interior disjoint from $\Delta$. Boundary-compress $\Delta$ using $D$. This converts $\Delta$ into two discs, at least one of which $\Delta$ must be essential in $W$. Its boundary is then a preferred longitude contradicting our choice of $\lambda$. Thus, no such arc $\zeta$ exists. If there is an arc $\zeta \subset D \cap \Delta$ with endpoints on $\phi$, we can use an outermost disc of $D \setminus \Delta$ to isotope $\Delta$, rel $\lambda$, to reduce $|\Delta \cap \phi|$, another contradiction. Thus, each arc of $\Delta \cap D$ has one endpoint on $\phi$ and one endpoint on $\Delta$. Using an outermost disc of $D \setminus \Delta$, we may isotope $\lambda$ and $\Delta$, sliding $\lambda$ over an endpoint of $\phi$ to eliminate an arc of intersection. This again contradicts our choice of $\lambda$.  Hence, $\Delta \cap D = \nil$, as desired.

Suppose that $\lambda'$ is another preferred longitude bounding a disc $\Delta' \subset W \setminus \phi$. In $W \setminus \phi$,  isotope $\lambda'$ and $\Delta'$ so as to minimize $|\Delta' \cap \Delta|$. Since both $\Delta$ and $\Delta'$ are unpunctured and since $W \setminus \phi$ is irreducible, there are no circles of intersection. Suppose that $\zeta \subset \Delta' \cap \Delta$ is an outermost arc of intersection in $\Delta$, cobounding a subdisc of $\Delta$ with an arc of $\lambda$ and with interior disjoint from $\Delta'$. Boundary-compress $\Delta'$ using that outermost disc. We create two discs $\Delta$ and $\Delta_2$ which, after a small isotopy, are disjoint from $\Delta'$. One of them, say $\Delta$ is essential in $W$. The curves $\boundary \Delta$ and $\boundary \Delta$ must be isotopic in $S$, as both are preferred longitudes. Thus, $\boundary \Delta_2$ is inessential in $S$. If $\boundary \Delta_2$ also bounds a disc in $\circ{S}$, we contradict the minimality of $|\Delta' \cap \Delta|$. Thus, $\boundary \Delta_2$ bounds a disc containing $\epsilon$. However, since $S$ is a torus, the curves $\boundary \Delta$ and $\boundary \Delta$ are still isotopic in $\circ{S}$. Consequently, $\Delta \cap \Delta' = \nil$ and $\boundary \Delta'$ is isotopic to $\lambda$ in $\circ{S}$, as claimed. Since $W\setminus \phi$ is irreducible, once that isotopy has been performed, the discs $\Delta'$ and $\Delta$ are also isotopic, rel $\boundary$.
\end{proof}

\begin{definition}
Suppose that $(W,\phi_1)$ and $(W, \phi_2)$ are two complementary 1-tangles, such that $(W, \phi_1)$ is trivial. Let $D \subset W$ be an essential disc disjoint from $\phi_1$. The \defn{wrapping index} between $\phi_1$ and $\phi_2$ is 
    \[
    \omega(\phi_1, \phi_2) = \min\limits_{\phi'_2, D} |D \cap \phi'_2|
    \]
where the minimum is taken over all complementary 1-tangles $(W, \phi'_2)$ equivalent to $(W, \phi_2)$ and essential discs $D \subset W$ disjoint from $\phi_1$ and transverse to $D$.
\end{definition}

\begin{figure}[ht!]
\labellist
\small\hair 2pt
\pinlabel {$V$} [b] at 94 19
\pinlabel {$V$} [b] at 304 19
\pinlabel {$V$} [b] at 515 25
\pinlabel {$\psi$} [r] at 49 67
\pinlabel {$\psi$} [r] at 255 67
\pinlabel {$\psi$} [r] at 456 67
\pinlabel {$\phi_1$} [b] at 106 154
\pinlabel {$\phi_2$} [b] at 263 146
\pinlabel {$\phi_3$} [b] at 462 146
\endlabellist
\centering
\includegraphics[scale=0.5]{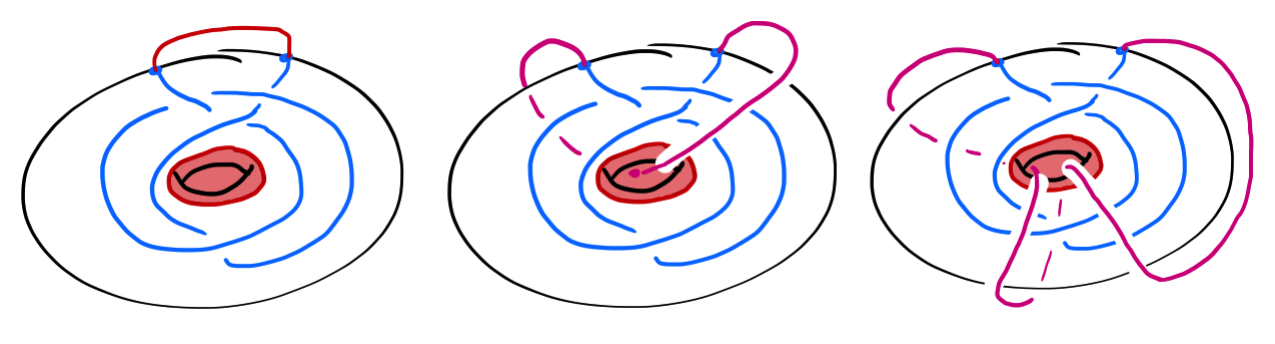}
\caption{From left to right, there are the closures of a 1-tangle $(V, \psi)$ by a complementary 1-tangles $(W, \phi_i)$, for $i = 1,2,3$ in order. The indicated disc $D$ whose boundary is a preferred longitude and which is disjoint from $\phi_1$, realizes the fact that $\omega(\phi_1, \phi_2) = 1$ and $\omega(\phi_1, \phi_3) = 2$.}
\label{fig: wrapping example}
\end{figure}

Lemma \ref{disc exists} shows that wrapping index is well-defined and that if $(W, \phi)$ is trivial, then $\omega(\phi, \phi) = 0$

\begin{remark}
Observe that in our definition of wrapping number, the arc $\phi_2$ is allowed to be isotoped through the arc $\phi_1$; that is, we use isotopies of $\phi_2$, not $\phi_1 \cup \phi_2$. Although it is possible to define a stronger version of wrapping number that uses isotopies of $\phi_1 \cup \phi_2$, we will not need it.
\end{remark}

Here is a partial converse to Lemma \ref{disc exists}.

\begin{lemma}\label{lem: wrapping well def}
Suppose that, for $i = 1,2$, $(W, \phi_i)$ is a complementary 1-tangle, with $(W, \phi_1)$ trivial. If $\omega(\phi_1, \phi_2) = 0$, then the following hold:
\begin{enumerate}
\item If $(W,\phi_2)$ is trivial, then $(W, \phi_1)$ and $(W, \phi_2)$ are equivalent.
\item If $(W, \phi_2)$ is not trivial, it is obtained from $(W, \phi_1)$ by the addition of local knots.
\end{enumerate}
\end{lemma}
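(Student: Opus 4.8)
The plan is to start from the hypothesis $\omega(\phi_1, \phi_2) = 0$, which by definition means there is an essential disc $D \subset W$ disjoint from $\phi_1$ and a complementary 1-tangle $(W, \phi_2')$ equivalent to $(W, \phi_2)$ with $D \cap \phi_2' = \nil$. After replacing $\phi_2$ by $\phi_2'$, I may assume $\phi_2$ itself is disjoint from $D$. The curve $\boundary D$ is a preferred longitude, and cutting $W$ along $D$ yields a 3-ball $B$ containing both $\phi_1$ and $\phi_2$. Inside $B$ both arcs have their endpoints on the two scar discs (two parallel copies of $D$) together with the arc $\epsilon \subset S$ joining $\boundary \psi$; more precisely, the boundary sphere of $B$ is made from $\circ{S}$ cut along $\lambda = \boundary D$ together with the two copies of $D$, and $\boundary \phi_1 = \boundary \phi_2 = \boundary \psi$ lie on the $\circ{S}$ part. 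The key structural point is that since $(W, \phi_1)$ is trivial, $\phi_1$ is isotopic rel endpoints into $\boundary W$; using the uniqueness from Lemma \ref{disc exists}, I can take $\phi_1$ to be unknotted in $B$, sitting in the boundary sphere of $B$.

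For part (1), suppose $(W, \phi_2)$ is also trivial. Then $\phi_2$ likewise admits a disc $D_2 \subset W$ with $\boundary D_2$ a preferred longitude bounding $\phi_2$ together with an arc in $S$; applying Lemma \ref{disc exists} to $(W,\phi_2)$ and comparing with the disc $D$, the uniqueness statement forces $\boundary D$ to be isotopic in $\circ{S}$ to this longitude and the discs to be isotopic rel $\boundary$ in $W \setminus \phi_2$. The upshot is that both $\phi_1$ and $\phi_2$ are isotopic into $\boundary W$ using discs whose boundary longitudes agree in $\circ{S}$, and both arcs live as unknotted arcs in the common 3-ball $B$ with the same endpoints on its boundary. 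Since any two unknotted properly embedded arcs in a 3-ball with the same boundary are isotopic rel endpoints, I conclude $\phi_1$ and $\phi_2$ are isotopic in $W$ rel $\boundary \psi$, i.e. the two 1-tangles are equivalent. The care needed here is to run the comparison of the two triviality discs honestly through Lemma \ref{disc exists} rather than assert it, and to verify the identification of endpoints on $\boundary B$ is the same for both arcs (this uses the remark that simple closed curves isotopic in $S$ and disjoint from $\epsilon$ are isotopic in $S \setminus \epsilon$).

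For part (2), assume $(W, \phi_2)$ is not trivial. I still have $\phi_2 \subset B$ a 3-ball, a properly embedded arc with its endpoints on $\boundary B$. Because $(W,\phi_1)$ is trivial with disc $D$, and $\phi_2$ is disjoint from $D$, I can think of $\phi_2$ as an arc in $B$ whose endpoints lie on the boundary at the \emph{same} two points where the unknotted $\phi_1$ terminates. The plan is to show that the only obstruction to $\phi_2$ being trivial is the presence of local knots: since $\boundary B$ is a sphere and $\phi_2$ is a single arc with both endpoints on it, $(B, \phi_2)$ is a 1-string tangle in a ball, and the failure of such an arc to be boundary-parallel is detected entirely by local knotting along the arc (a ball-arc pair is trivial iff it has no local knot, by the standard ball-arc analysis — compress/isotope $\phi_2$ toward $\boundary B$ and the only obstruction is a summand that is knotted in a subball). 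Decomposing $\phi_2$ along a maximal collection of such knotting balls expresses $(W, \phi_2)$ as $(W, \phi_1)$ with local knots inserted along the arc, which is exactly the assertion.

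The main obstacle I expect is part (2): making precise the claim that a properly embedded arc in a $3$-ball fails to be trivial \emph{only} via local knots, and then transporting that factorization faithfully back from $B$ into $W$ so that the resulting knots are genuinely ``local'' in the sense of the paper's definition (intersecting some ball in a single knotted arc). One must check that the balls exhibiting the local knots sit inside $W$ away from the rest of the structure and that the arc-with-no-local-knot core is isotopic to $\phi_1$ in $W$ rel $\boundary \psi$; this last isotopy again relies on the uniqueness of the triviality disc from Lemma \ref{disc exists}. A secondary technical point is bookkeeping of the endpoints and the arc $\epsilon$ when cutting along $D$, ensuring throughout that the isotopies are rel $\boundary \psi$ and do not inadvertently move endpoints across $\lambda$.
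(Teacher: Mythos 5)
Your proposal is correct and takes essentially the same route as the paper's proof: use an essential disc disjoint from both arcs (available since $\omega(\phi_1,\phi_2)=0$) to cut $W$ into a 3-ball containing $\phi_1$ and $\phi_2$, then get (1) from the fact that two unknotted arcs in a ball with the same endpoints are isotopic rel endpoints, and (2) by removing the local knots of $\phi_2$ to produce an unknotted arc isotopic to $\phi_1$. The only difference is one of explicitness: the paper treats the step ``trivial in $W$ implies unknotted in the ball'' and the local-knot factorization as standard, whereas you route the former carefully through the uniqueness clause of Lemma \ref{disc exists}; both are legitimate.
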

\begin{proof}
Let $D \subset W$ be an essential disc disjoint from $\phi_1$ and $\phi_2$. Compressing $S$ using $D$ results in a sphere $P$ containing the endpoints of the arcs.  The sphere $P$ contains two discs that are the scars from $D$. If $\phi_2$ is trivial, then it is also unknotted in the ball $B$ bounded by $P$ and so is isotopic rel $\boundary \phi_2$ in $B$ (and hence in $W$) to $\phi_1$. If $\phi_2$ is knotted in $B$, then we see that it contains a local knot. Surgering along $P$ converts $\phi_2$ into an unknotted arc which must then be isotopic rel $\boundary \phi_2$ to $\phi_1$.
\end{proof}

We use the next example in the proof of Theorem \ref{Annulus Twist Theorem}. 

\begin{lemma}\label{coilers calculation}
    Suppose that $\kappa \subset S$ is an oriented essential simple closed curve containing $\boundary \psi$. Let $\epsilon_1, \epsilon_2$ be the arcs into which $\boundary \psi$ divides $\kappa$. Let $\phi_1, \phi_2$ be the result of pushing the interiors of $\epsilon_1, \epsilon_2$ (respectively) into the interior of $W$. If $\kappa$ represents $q[\mu]$ in $H_1(W)$, where $[\mu]$ is the class of a meridian $\mu$ of $V$ and $q \in \Z$, then
    \[
    \omega(\phi_1, \phi_2) = \omega(\phi_2, \phi_1) = |q|.
    \]
\end{lemma}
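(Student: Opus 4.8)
The plan is to establish the two inequalities $\omega(\phi_1,\phi_2)\ge |q|$ and $\omega(\phi_1,\phi_2)\le |q|$ from a single homological computation, and then to note that the setup is symmetric in the two arcs, so the same argument computes $\omega(\phi_2,\phi_1)$.

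First I would fix homological coordinates in $W$. Since $V$ is unknotted, $H_1(W)\cong\Z$ is generated by the class $[\mu]$ of a meridian of $V$ (the core of $W$), while a preferred longitude $\lambda$ bounds a disc in $W$ and so is null-homologous there. An essential disc $D\subset W$ is a meridian disc with $\partial D$ a preferred longitude, so $[D]$ generates $H_2(W,\boundary W)$ and the intersection pairing sends $[D]\otimes[\mu]$ to $\pm 1$. Because $\phi_1\cup\phi_2$ is isotopic in $W$ to $\kappa$, it represents $q[\mu]$; the same holds for $\phi_1\cup\phi'_2$ for any $\phi'_2$ equivalent to $\phi_2$ rel $\boundary$.

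For the lower bound, let $D$ be any essential disc disjoint from $\phi_1$ and let $\phi'_2$ be equivalent to $\phi_2$. The algebraic intersection of the loop $\phi_1\cup\phi'_2$ with $D$ equals $\pm q$; since $\phi_1\cap D=\nil$, all of this intersection is carried by $\phi'_2$, so $|D\cap\phi'_2|\ge|\phi'_2\cdot D|=|q|$. Minimizing over $D$ and $\phi'_2$ gives $\omega(\phi_1,\phi_2)\ge|q|$. For the upper bound I take $D$ to be the disc disjoint from $\phi_1$ provided by Lemma \ref{disc exists}, and isotope $\phi_2$ in $W$ to meet $D$ in exactly $|q|$ points. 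The cleanest route is to project $W\cong D^2\times S^1\to S^1$ with $D=p^{-1}(\mathrm{pt})$, or equivalently to pass to the infinite cyclic cover $\tild W\cong D^2\times\R$ dual to $D$. Because $\phi_1$ is disjoint from $D$, a lift of $\phi_1$ stays in a single slab between consecutive lifts of $D$, so the two endpoints lift to the same slab; tracking the loop $\phi_1\cup\phi_2$, whose class is $q[\mu]$, then forces a lift of $\phi_2$ to run between slabs whose levels differ by exactly $q$, hence to cross the lifts of $D$ at least $|q|$ times. Since $(W,\phi_2)$ is trivial, $\phi_2$ is an unknotted arc and can be straightened to be monotone in the $S^1$-direction, realizing exactly $|q|$ crossings. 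Together with the lower bound this yields $\omega(\phi_1,\phi_2)=|q|$.

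Finally, the hypotheses and the class $[\kappa]=q[\mu]$ are symmetric in the two arcs, and both $(W,\phi_1)$ and $(W,\phi_2)$ are trivial, so the identical argument with the roles of $\phi_1$ and $\phi_2$ exchanged gives $\omega(\phi_2,\phi_1)=|q|$.

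I expect the upper bound to be the main obstacle: one must verify that $\phi_2$ can be isotoped to realize the \emph{minimal} (algebraic) number of intersections with the \emph{particular} disc $D$ that is disjoint from $\phi_1$. The delicate bookkeeping is that the disjointness of $\phi_1$ from $D$ forces its winding contribution to vanish, leaving $\phi_2$ to carry all $q$ of the crossings; the triviality of $(W,\phi_2)$ is exactly what licenses the straightening to the minimal position.
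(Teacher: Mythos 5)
Your lower bound is correct and is the same argument as the paper's: after any isotopy of $\phi_2$ rel endpoints, the loop $\phi_1\cup\phi'_2$ still represents $q[\mu]$ in $H_1(W)$, and since $D\cap\phi_1=\nil$ the arc $\phi'_2$ carries all of the algebraic intersection $\pm q$ with $D$, so $|D\cap\phi'_2|\ge |q|$. The closing symmetry remark handling $\omega(\phi_2,\phi_1)$ is also fine.

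The gap is in your upper bound, and you have in fact flagged its location yourself. The sentence ``since $(W,\phi_2)$ is trivial, $\phi_2$ is an unknotted arc and can be straightened to be monotone in the $S^1$-direction, realizing exactly $|q|$ crossings'' is an assertion, not an argument, and it is precisely the nontrivial content of the upper bound. The cyclic-cover computation preceding it is homotopical, and in a $3$-manifold two embedded arcs that are homotopic rel endpoints need not be isotopic rel endpoints; so the slab count does not by itself produce the isotopy of $\phi_2$ you need, and naming triviality as the ``license'' does not explain how it is used. A completion along your lines would have to run something like: isotope $\phi_2$ rel endpoints onto its shadow $\epsilon_2\subset S$; choose the disc of Lemma \ref{disc exists} so that, as in that lemma's proof, its boundary $\lambda$ is disjoint not just from $\phi_1$ but from the shadow $\epsilon_1$; remove bigons between $\epsilon_2$ and $\lambda$ by an isotopy of the embedded arc $\epsilon_2$ rel endpoints in the torus $S$; invoke the bigon criterion to see that the resulting geometric intersection number is the minimum in the homotopy class, which equals $|\epsilon_2\cdot\lambda|=|\kappa\cdot\lambda|=|q|$ because $\lambda\cap\epsilon_1=\nil$; and finally push $\epsilon_2$ back into the interior of $W$. (Note also that ``monotone in the $S^1$-direction'' is not by itself enough even for embeddedness of the projected arc, e.g.\ an arc with constant $D^2$-coordinate.)

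It is worth comparing with the paper, which sidesteps the straightening problem entirely by the opposite maneuver: rather than isotoping the arc to fit a fixed disc, it applies an ambient isotopy making $\epsilon_i$ very short and then \emph{constructs} a meridian disc avoiding $\epsilon_i$ and meeting $\epsilon_j$ in exactly $|q|$ points, afterwards pushing both arcs into the interior of $W$. Choosing the disc to fit the arcs requires no statement about isotopy classes of embedded arcs in a solid torus; isotoping the arc to fit a chosen disc does, and supplying that statement is where all the remaining work in your proposal lies.
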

\begin{proof}
    Let $\{i,j\} = \{1,2\}$. Any meridian disc for $W$ must meet $\kappa$ at least $|q|$ times; so any meridian disc for $W$ disjoint from $\phi_i$ must meet $\phi_j$ at least $|q|$ times. On the other hand, we may isotope $W$ so that $\epsilon_i$ becomes a very short arc. It is then easy to construct a meridian disc for $W$ disjoint from $\epsilon_i$ and intersecting $\epsilon_j$ exactly $|q|$ times. Reversing the isotopy and pushing the interiors of $\epsilon_i$ and $\epsilon_j$ into the interior of $W$ we find a meridian disc of $W$ disjoint from $\phi_i$ and intersecting $\phi_j$ exactly $|q|$ times. Hence, $\omega(\phi_i, \phi_j) = |q|$.
\end{proof}

\begin{remark}
Generalizing the previous proof, we see that if $(W,\phi_1),(W, \phi_2)$ are trivial complementary 1-tangles such that $\phi_1 \cap \phi_2 = \boundary \psi$, then $\omega(\phi_1, \phi_2)$ and $\omega(\phi_2, \phi_1)$ are both bounded below by the winding number of the knot $\kappa = \phi_1 \cup \phi_2$ in $W$. 
\end{remark}

\subsection{Converting 2-tangles to 1-tangles and 1-tangles to 2-tangles}

From a 2-tangle with one arc unknotted, we can create a 1-tangle, see Figure \ref{fig: inducing}, reading from left to right.
\begin{definition}\label{induced 1}
Suppose that $(B, \tau)$ is a 2-tangle with $B \subset S^3$ and $\tau_0 \subset \tau$ an unknotted component. Then $(V, \psi) = (B \setminus \nbhd(\tau_0), \tau \setminus \tau_0)$ is a 1-tangle, we say it is \defn{$\tau_0$-induced} from $(B, \tau)$.\end{definition}

\begin{figure}[ht!]
\labellist
\small\hair 2pt
\pinlabel {$\tau_0$} [l] at 48 64
\pinlabel {$\lambda$} [l] at 239 69
\endlabellist
\centering
\includegraphics[scale=0.5]{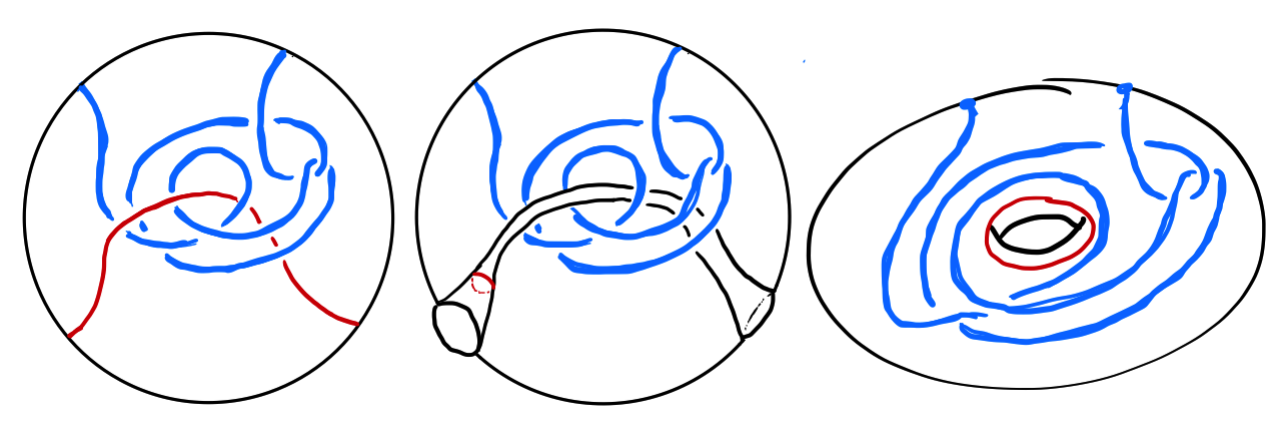}
\caption{Inducing a 1-tangle from a 2-tangle and vice versa. On the left is a 2-tangle with an unknotted arc $\tau_0$ in red. Drilling it out from the 3-ball creates the 1-tangle in the center, a preferred longitude $\lambda$ is marked, corresponding to a meridian of $\tau_0$. We can reshape the 1-tangle, preserving the endpoints of the arc and the preferred longitude, into the 1-tangle on the right. Conversely, given the 1-tangle and preferred longitude in the center (or on the right), we can attach a 2-handle along the preferred longitude to create the 2-tangle on the left, where one of the arcs is the cocore of the 2-handle.}
\label{fig: inducing}
\end{figure}

In Definition \ref{induced 1}, note that a meridian of $\tau_0$ becomes a preferred longitude for $(V, \psi)$. There is an inverse construction. See Figure \ref{fig: inducing}, reading from right to left.

\begin{definition}
Suppose that $(V, \psi)$ is a 1-tangle and that $\lambda$ is a preferred longitude. Let $B = V[\lambda]$; it is a 3-ball embedded in $S^3$. Let $\tau \subset B$ be the union of $\psi$ with the arc $\beta$ that is the cocore of the 2-handle attached to $\lambda$. Then $(B, \tau)$ is a 2-tangle. We say that $(B, \tau)$ is \defn{$\lambda$-induced} from $(V, \psi)$. 
\end{definition}

\begin{remark}
These constructions are inverses of each other. That is, if $(B, \tau)$ is a 2-tangle with $B \subset S^3$ and $\tau_0$ an unknotted component of $\tau$ and $\lambda$ a meridian of $\tau_0$, then $(B, \tau)$ is rationally equivalent to the 2-tangle that is $\lambda$-induced from the $\tau_0$-induced 1-tangle. Conversely, if $(V, \psi)$ is a 1-tangle with preferred meridian $\lambda$, then $(V,\psi)$ is equivalent to the $\beta$-induced 1-tangle that is $\lambda$-induced from $(V,\psi)$, where $\beta$ is the cocore of the 2-handle. 

Actually, there is a small ambiguity in those observations:  the choice of preferred longitude is made up to isotopy in $\circ{S}$ and there are different choices of regular neighborhoods. These differences will be immaterial in what follows.
\end{remark}

\begin{lemma}\label{lem: nontriv}
    Suppose $(B, \tau) = (V[\lambda], \psi \cup \beta)$ is a $\lambda$-induced 2-tangle from $(V,\psi)$. If $(V,\psi)$ is nontrivial , so is $(B,\tau)$.
\end{lemma}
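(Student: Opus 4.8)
The plan is to prove the contrapositive: assuming the $\lambda$-induced 2-tangle $(B,\tau)=(V[\lambda],\psi\cup\beta)$ is trivial, I will show that $(V,\psi)$ is trivial. Recall that triviality of the 2-tangle means the two arcs of $\tau=\psi\cup\beta$ can be simultaneously isotoped, rel their endpoints, into $\boundary B$. The concrete consequence I would record first is the existence of \emph{disjoint} boundary-parallelism discs: an embedded disc $D_\psi\subset B$ with $\boundary D_\psi=\psi\cup\alpha_\psi$ and an embedded disc $D_\beta\subset B$ with $\boundary D_\beta=\beta\cup\alpha_\beta$, where $\alpha_\psi,\alpha_\beta\subset\boundary B$ are disjoint arcs and $D_\psi\cap D_\beta=\nil$.

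With these discs in hand the argument is short. Since $\beta\subset D_\beta$ and $D_\psi\cap D_\beta=\nil$, the disc $D_\psi$ is disjoint from $\beta$. As $D_\psi$ and $\beta$ are disjoint compact sets, a sufficiently thin regular neighborhood $\nbhd(\beta)$ is disjoint from $D_\psi$; recall that $V=B\setminus\nbhd(\beta)$. Hence $D_\psi\subset V$, and $\alpha_\psi=\boundary D_\psi\cap\boundary B$, being disjoint from $\nbhd(\beta)$, lies in $\boundary B\setminus\nbhd(\beta)\subset\boundary V$. The endpoints $\boundary\psi$ likewise lie in $\boundary B\setminus\nbhd(\beta)\subset\boundary V$, since they are disjoint from $\beta$. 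Thus $D_\psi$ is a boundary-parallelism disc for $\psi$ lying in $V$, exhibiting an isotopy of $\psi$, rel $\boundary\psi$, into $\boundary V$. Therefore $(V,\psi)$ is trivial, which proves the contrapositive and hence the lemma.

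The one point that deserves care is the existence of the disjoint trivializing discs $D_\psi,D_\beta$, i.e.\ that a trivial 2-tangle is boundary-parallelized by disjoint discs rather than merely having each strand individually parallel to $\boundary B$. I would justify this by recalling that a trivial 2-tangle is homeomorphic, as a pair, to the standard model $(D^2\times I,\{p_1,p_2\}\times I)$, in which the two strands are boundary-parallelized by the disjoint discs $\gamma_1\times I$ and $\gamma_2\times I$, where $\gamma_1,\gamma_2$ are disjoint embedded arcs in $D^2$ running from $p_1,p_2$ to $\boundary D^2$. Pulling these two discs back through the homeomorphism (associating to each of $\psi,\beta$ the disc over the strand it is carried to) yields the required $D_\psi,D_\beta$. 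Equivalently, one may run the entire argument inside this model: drilling the strand corresponding to $\beta$ turns $B$ into the solid torus $(D^2\setminus\nbhd(\mathrm{pt}))\times I$, in which the remaining vertical arc $\psi$ is visibly boundary-parallel via a disc $\gamma\times I$. I note that the argument never uses that $\beta$ is unknotted; it uses only $D_\psi\cap\beta=\nil$, so it is robust to the precise bookkeeping of the 2-handle and its cocore.
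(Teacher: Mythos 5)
Your proof is correct, but it follows a genuinely different route from the paper's. Both arguments proceed by contrapositive, but where you extract from triviality the existence of \emph{disjoint} parallelism discs $D_\psi, D_\beta$ (justified via the standard model $(D^2\times I,\{p_1,p_2\}\times I)$) and then observe that $D_\psi$, being disjoint from $\beta$, survives into $V = B\setminus\nbhd(\beta)$ and directly certifies that $\psi$ is boundary-parallel there, the paper uses a weaker consequence of triviality: a properly embedded disc $D\subset B$ separating the two strands. In the paper, that disc becomes a compressing disc for $\circ{S}$ in $V\setminus\psi$; its boundary bounds a twice-punctured disc $E\subset S$, the sphere $D\cup E$ cuts off a 3-ball containing $\psi$, and the absence of local knots on $\psi$ (another consequence of triviality of $(B,\tau)$) is then invoked to conclude that $\psi$ is isotopic rel $\boundary\psi$ into $E$. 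Your route is shorter and skips the cut-off-a-ball and local-knot steps entirely, at the cost of needing the stronger (but equally standard) fact that the strands of a trivial tangle admit \emph{simultaneous} disjoint parallelism discs --- which you rightly flag as the point needing care, since individual unknottedness of the strands would not suffice for either argument; note the paper's separating disc rests on the same kind of standard-model fact, so the two proofs suppress comparable background. The remaining technicality in your write-up, which you also acknowledge, is that replacing the 2-handle by a thinner regular neighborhood of $\beta$ changes $(V,\psi)$ only by an ambient isotopy that can be taken to fix $\psi$ (uniqueness of regular neighborhoods of $\beta$ disjoint from $\psi$); this is at the same level of suppressed detail as the paper's own appeals to standard tangle facts, so it does not constitute a gap.
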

\begin{proof}
  We prove the contrapositive. Assume $(B, \tau)$ is trivial. Then there is a properly embedded disc $D$ in $B$ separating the strands of $\tau$. The disc $D$ is then a compressing disc for $\circ{S}$ in $V \setminus \psi$. The curve $\boundary D$ is separating in $\circ{S}$, so it bounds a twice punctured disc $E \subset S$.  The sphere $D \cup E$ bounds a 3-ball in $B$ containing $\psi$. Since $\psi$ does not have a local knot (as $(B,\tau)$ is trivial) $\psi$ is isotopic into $E$ relative to its endpoints. Hence, $(V, \psi)$ is trivial.
\end{proof}

\section{Examples}\label{example}

In this section we give a construction, using Brunnian $\theta$-curves, of nontrivial 1-tangles having two inequivalent unknot closures of wrapping index 1 from each other. We then show that this construction accounts for all such examples.

\begin{figure}[ht!]
\labellist
\small\hair 2pt
\pinlabel {$e_1$} [b] at 124 280
\pinlabel {$e_3$} [r] at 22 169
\pinlabel {$e_2$} [r] at 197 169
\pinlabel {$\psi$} [t] at 437 94
\pinlabel {$W$} at 380 274
\pinlabel {$\lambda$} [b] at 353 169
\pinlabel {(A)} [t] at 107 15
\pinlabel {(B)} [t] at 370 15
\endlabellist
\centering
\includegraphics[scale=0.5]{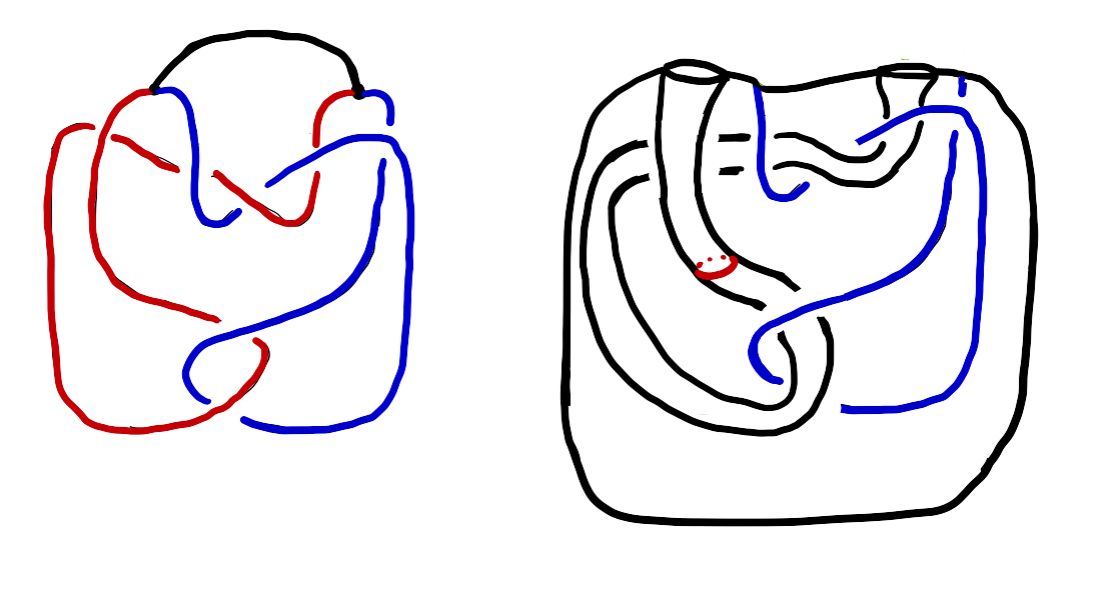}
\caption{Figure (A) shows the Kinoshita graph. Drilling out the unknot $K_{13}$ creates a 1-tangle having two different unknot closures.}
\label{fig: Kinoshita}
\end{figure}

A \defn{$\theta$-curve} is a spatial graph in $S^3$ with exactly two vertices and three edges $e_1, e_2, e_3$, each of which joins the two vertices. Let $K_{ij} = e_i \cup e_j$ for $1 \leq i < j \leq 3$; these are the \defn{constituent knots} of the $\theta$-curve. A $\theta$-curve is \defn{Brunnian} if it is not isotopic into a Heegaard sphere in $S^3$ and if each of its constituent knots is unknotted. Consider a Brunnian $\theta$-curve $\Gamma$. Figure \ref{fig: Kinoshita}.A shows the example of the Kinoshita graph \cite{Kinoshita}, the best known Brunnian $\theta$-curve.

Let $V$ be a regular neighborhood of $K_{13}$ and let $W$ be the complementary solid torus. Observe these give a genus 1 Heegaard splitting of $S^3$. Let $\psi = e_2 \cap V$, so that $(V, \psi)$ is a 1-tangle. Let $\phi_1 = e_1 \cup (e_2 \cap W)$ and $\phi_3 = (e_2 \cup e_3) \cap W$, so that $(W, \phi_2)$ and $(W, \phi_3)$ are complementary 1-tangles. The closures $K(\phi_1) = K_{12}$ and $K(\phi_3) = K_{23}$ are then both unknots. Letting $\lambda \subset S = \boundary V$ be a meridian of $e_3$, we see that $\omega(\phi_1, \phi_3) = 1$. 

 Let $B'$ be a closed regular neighborhood of $e_1$. Let $B$ be the closure of its complement. Then $(B, B \cap K_{23})$ and $(B', B' \cap K_{23})$ are each 2- tangles. See Figure \ref{fig: Kinoshita}.B. Note that each strand of $B \cap K_{23}$ is unknotted in $B$ (ignoring the other strand), as $K_{23}$ is an unknot. Let $V = B \setminus \nbhd(e_3)$. Since $B \cap e_3$ is unknotted in $B$, $V$ is an unknotted solid torus in $S^3$. Let $\psi = e_2 \cap V$. A meridional curve $\lambda$ of $e_3$ is a preferred longitude for $V$. Indeed, $(B, B \cap K_{23})$ is the $\lambda$-induced 2-tangle from $(V, \psi)$. See Figure \ref{fig: Kinoshita}.B. It is easily checked that in a Brunnian $\theta$-curve, no cycle bounds an edge disjoint from its complementary edge. Ozawa and Tsutsumi \cite{OT}, showed that the exterior of a Brunnian $\theta$-curve is $\boundary$-irreducible, so the 1-tangle $(V,\psi)$ in our construction is nontrivial. We have constructed the 1-tangles we were looking for.

\begin{figure}[ht!]
\labellist
\small\hair 2pt
\pinlabel {$\phi_1$} [b] at 172 309
\pinlabel {$\phi_3$} [b] at 414 290
\pinlabel {$(V,\psi)$} at 142 65
\pinlabel {$(V,\psi)$} at 439 65
\endlabellist
\centering
\includegraphics[scale=0.5]{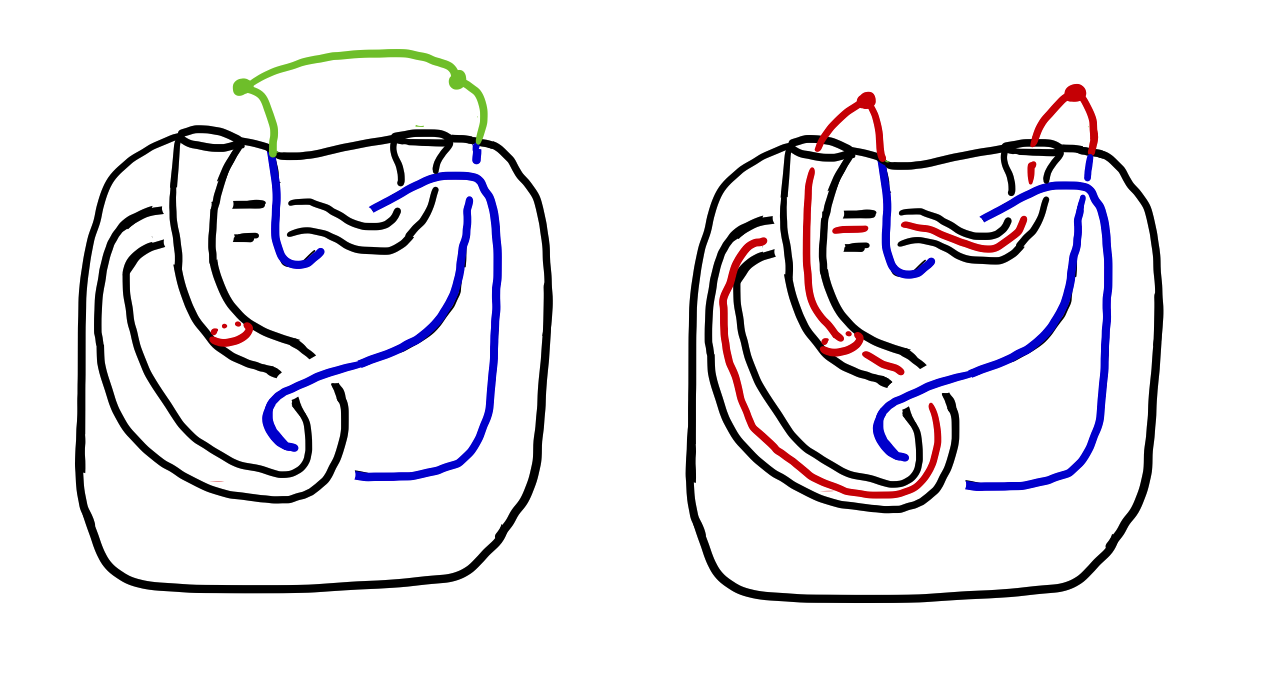}
\caption{Two different unknot closures $(W, \phi_1)$ and $(W, \phi_2)$ of the 1-tangle $(V,\psi)$ in Figure \ref{fig: Kinoshita}.B.  Both complementary 1-tangles are constructed from the other edges of the Kinoshita graph.}
\label{fig: Kinoshitaclosure}
\end{figure}

A number of papers, eg. \cites{SW, JKLMTZ, BPV}, have constructed Brunnian $\theta$-curves of increasing complexity, so there are many examples of nontrivial 1-tangles having two inequivalent unknot closures that are of wrapping index 1 apart. 

Conversely, suppose that $(V,\psi)$ is a nontrivial 1-tangle and that $(W, \phi_1)$ and $(W, \phi_3)$ are two complementary 1-tangles giving unknot closures of $(V, \psi)$. Recall from Lemma \ref{trivial} that both $\phi_1$ and $\phi_3$ are trivial. Assume that  $\omega(\phi_1, \phi_2) = 1$. Let $\lambda \subset S$ be a preferred longitude bounding a disc disjoint from $\phi_1$ and intersecting $\phi_3$ in one point. Let $\beta \subset N[\lambda]$ be the cocore of a 2-handle attached along $\lambda$. Since $S[\lambda]$ is a sphere that is the boundary of a regular neighborhood $B'$ of $\phi_1$, we may extend the endpoints of $\beta$ through that neighborhood (using a product structure on $B'\setminus \phi_1$) so as to lie on $\phi_1$. Then $\Gamma = K(\phi_1) \cup \beta$ is a $\theta$-curve. Its exterior is $N$, which by Lemma \ref{Jaco lem} is $\boundary$-irreducible, so $\Gamma$ is nontrivial. The constituent knots of $\Gamma$ are the core of $W$, a knot isotopic to $K(\phi_1)$, and a knot isotopic to $K(\phi_3)$, so $\Gamma$ is Brunnian.

\section{Annulus twists}\label{annulus twists}

\begin{definition}\label{def: annulus twist}
Suppose that $K \subset S^3$ is a knot and that $A \subset S^3$ is an embedded oriented annulus with $\boundary A \cap K = \nil$, and the interior of $A$ transverse to $K$. Give $\boundary A$ the orientation induced from the orientation of $A$. If $A \cap K \neq \nil$ and if no component of $\boundary A$ bounds a disc in $S^3 \setminus K$, we call $A$ a \defn{twisting annulus} for $K$. If $A$ lies in a 2-sphere  $P \subset S^3$, it is \defn{unknotted}. The unknotted twisting annulus $A$ is \defn{sufficiently incompressible} if there is such a $P$, such that every compressing disc $D$ for $P \setminus K$ in $S^3 \setminus K$ has $|\boundary D \cap \boundary A| > 2$.

If $A$ is a twisting annulus for $K$ and $q \in \Z$, we let the knot $K_q$ be obtained by performing $1/q$ Dehn surgery on each component of $\boundary A$; that is by $q$-twists of $K$ around $A$; with the direction of twisting determined by the sign of $q$ with respect to the orientation of $A$.
\end{definition}

\begin{remark}
Some authors prefer to orient the components of $A$ in the same direction and then perform $1/q$ surgery on one component and $-1/q$ surgery on the other.

Note that if for each component $c \subset \boundary A$, $P \setminus (c \cup K)$ is incompressible, then $A$ is sufficiently incompressible. In particular, if $P\setminus K$ is an essential meridional planar surface , then $A$ is automatically sufficiently incompressible. 
\end{remark}

\begin{remark}
Let $c_1, c_2$ be the components of $\boundary A$. If we were to relax the definition of ``twisting annulus'' to allow one of $c_1, c_2$ to bound a disc in $S^3\setminus K$, then the knots before and after the twist are equivalent, as the annulus twist amounts to twisting the knot $K$ around a once-punctured disc. Hence, we require that neither $c_1$ nor $c_2$ bound a disc in $S^3 \setminus K$. Similarly, if we allowed $A \cap K = \nil$, then as the components of $\boundary A$ are oppositely oriented, the surgeries on the components of $\boundary A$ would cancel, and the twisting would recreate $K$. 

On the other hand, if one of the curves $c_1 \subset \boundary A$ bounds a disc in $S^3$ intersecting $K$ exactly once, then the annulus twist of $K$ around $A$ is equivalent to twisting $K$ around a disc with boundary $c_2$. By \cite{KMS}*{Theorem 4.2}, it is generally not possible for both $K$ and $K_q$ to be unknots. Assuming $c_2$ does not bound a disc disjoint from $K$, there are only two possible exceptions. In each, $K$ can be arranged to lie as a 2-braid in $S^3 \setminus \nbhd(c_2)$ and $q$ is one of $\pm 1$. There are diagrams in \cite{KMS} realizing these exceptions where  $c_2$ is the braid axis, $K$ is the closure of a braid with a single crossing, and, after the twist, $K_q$ has a nonalternating diagram with three crossings. This gives slightly more information than our Theorem \ref{Annulus Twist Theorem}, in the setting when $c_1$ bounds a once-punctured disc. 
\end{remark}

\begin{proof}[Proof of \ref{Annulus Twist Theorem} from \ref{Main Thm}]
Suppose that $K = K_0 \subset S^3$ is an unknot and that $A$ is a sufficiently incompressible unknotted twisting annulus for $K$ with $|A \cap K| = 1$.  Let $P \subset S^3$ be a 2-sphere containing $A$ realizing the definition of ``sufficiently incompressible unknotted twisting annulus.'' Let $W$ be a solid torus obtained by a slight thickening of $A$ in a normal direction, as in Figure \ref{convert}.  Then $\phi_0 = W \cap K$ is an unknotted arc. An equivalent knot to $K_q$ can be obtained by replacing $\phi_0$ with an unknotted arc $\phi_q$ having the same endpoints as $W\cap K$ but running $q$ times longitudinally around $W$ (with the sign of $q$ indicating the direction of winding).  Let $V$ be the complementary solid torus to $W$. Let $\psi = K \cap V$. To apply Theorem \ref{Main Thm}, we need only show that $(V, \psi)$ is nontrivial. 

\begin{figure}[ht!]
\labellist
\small\hair 2pt
\pinlabel {$K_{0}$} [tl] at 184 391
\pinlabel {$K_{\pm 1}$} [tl] at 489 396
\pinlabel {$W$} [b] at 105 208
\pinlabel {$\phi_0$} [b] at 184 235
\pinlabel {$V$} at 38 185
\pinlabel {$W$} [b] at 396 208
\pinlabel {$V$} at 346 185
\pinlabel {$\phi_{\pm 1}$} [l] at 452 25
\endlabellist
\centering
\includegraphics[scale=0.45]{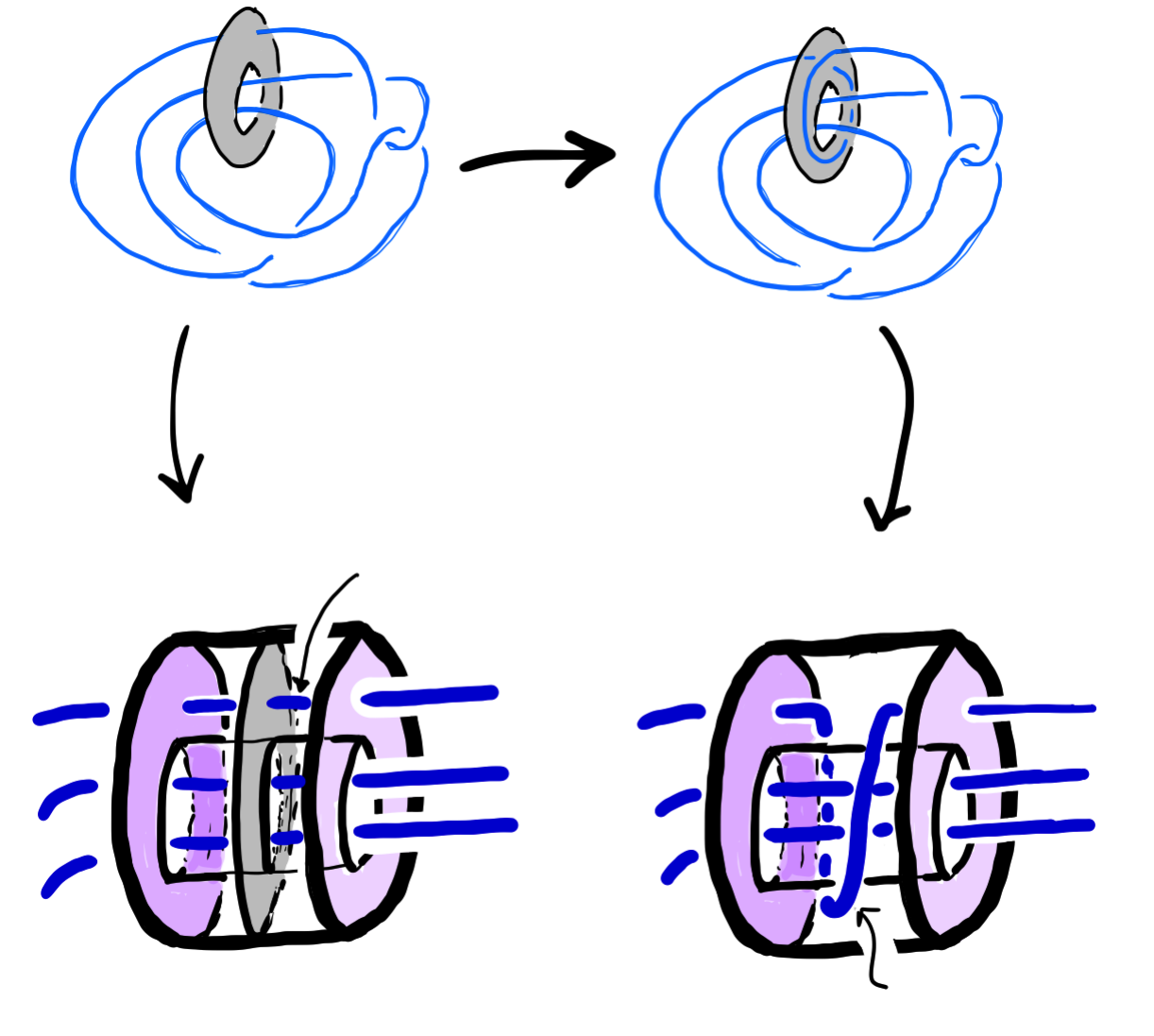}
\caption{Creating 1-tangles from an annulus twist. The solid torus $W=A \times I$, where $A$ is the twisting annulus. The knot $K$ intersects $W$ in an arc $\phi_0 = \{\text{point}\} \times I$. The result of the annulus twist can be realized by replacing $\phi_0$ with another trivial arc $\phi_q$. We depict the situation when $q = \pm 1$, with the sign depending on the orientation of the annulus.}
\label{convert}
\end{figure}

Suppose, for a contradiction, that $(V, \psi)$ is trivial. In this case, $\circ{S}$ is compressible in $N = V \setminus \nbhd(\psi)$ via some compressing disc $\Delta$. Let $P_V = P \cap V$; it is the union of two discs, each intersecting $K$. Properly isotope $\Delta$ in $V\setminus \psi$ so that $|\Delta \cap P_V|$ is minimal. The intersection $\Delta \cap P_V$ consists of circles and arcs. Since $P_V\setminus K$ is incompressible in $S^3 \setminus K$ and $K$ intersects each component of $P_V$, the intersection $\Delta \cap P_V$ consists solely of arcs. 

Let $\zeta \subset \Delta \cap P_V$ be an arc of intersection that is outermost in $\Delta$, with $D' \subset \Delta$ the outermost disc it bounds. The curve $\boundary D'$ is the union of $\zeta$ with an arc $\zeta' \subset \boundary \Delta$. The arc $\delta$ is a returning arc in the once-punctured annulus $S \setminus \boundary P_V$. Using the construction of $W$ as a normal bundle of $A$, we may use the product structure on $(W, K \cap W)$ to extend $\zeta'$ through $W\setminus K$ so as to lie on $A$. This converts the disc $D'$ into a disc $D$ which is a compressing disc for $P \setminus K$ in $S^3 \setminus K$ where $|\boundary D \cap \boundary A| = 2$. This contradicts the assumption that $A$ was sufficiently incompressible. Thus, $(V, \psi)$ is nontrivial. Theorem \ref{Main Thm} implies that, up to isotopy relative to endpoints in $W$, there are at most two properly embedded arcs $\phi \subset W$ such that $K(\phi) = \psi \cup \phi$ is the unknot. The arc $\phi_0 = K \cap W$ is one of those arcs.  Theorem \ref{Main Thm} also says that if $K_q = K(\phi_q)$ and $K_\ell = K(\phi_\ell)$ is the unknot, then $\omega(\phi_\ell, \phi_q) = \pm 1$.  The knot $\kappa = \phi_\ell \cup \phi_q$ can be isotoped (relative to $\boundary \psi$) to lie in $S$ and it represents $(\ell + q)[\mu]$ in $H_1(W)$ where $\mu$ is meridian of $V$ (i.e. preferred longitude of $V$). By Lemma \ref{coilers calculation}, $\omega(\phi_\ell, \phi_q) = |q + \ell|$. Hence, $q = \ell \pm 1$, so both $q, \ell \in \{0, -1, +1\}$ and if one of them is $\pm 1$, the other cannot be $\mp 1$. Thus, there is at most one $q \neq 0$ such that $K_q$ is the unknot, and if there is such a $q$, then $q = \pm 1$. 
 \end{proof}

\section{Wrapping index one}\label{basic props}

It turns out that when $\omega = 1$, there is a close relationship between 1-tangle closures and 2-tangle closures. See Figure \ref{distance 1} for a depiction of the next lemma. 

\begin{proposition}\label{reduction}
Suppose that $(V, \psi)$ is a 1-tangle and that $(W, \phi_i)$ is a complementary 1-tangle for $i = 1,2,3$. Suppose that $(W, \phi_1)$ is trivial and that $\omega(\phi_1, \phi_2) = \omega(\phi_1, \phi_3) = 1$. Then there is a preferred longitude $\lambda$ bounding a disc $D \subset W$ such that the following hold:
\begin{enumerate}
\item the arcs $\phi_2$ and $\phi_3$ can be isotoped in $W$, relative to their endpoints so that $D \cap \phi_2 = D \cap \phi_3$ is a single point.
\item Let $(B, \tau)$ be the $\lambda$-induced 2-tangle from $(V, \psi)$ and let $B'$ be the closure of the complement of $B$ in $S^3$. Then the 2-tangles $(B', \tau'_2) = (B', K(\phi_2) \cap B')$ and $(B', \tau'_3) = (B', K(\phi_3) \cap B')$ are rationally equivalent if and only if the 1-tangles $(W, \phi_2)$ and $(W, \phi_3)$ are equivalent. 
\end{enumerate}
\end{proposition}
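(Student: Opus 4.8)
The plan is to manufacture a single disc $D$ that simultaneously realizes $\omega(\phi_1,\phi_2)$ and $\omega(\phi_1,\phi_3)$, and then to set up an explicit cut-and-reglue correspondence along $D$ between complementary $1$-tangles meeting $D$ once and complementary $2$-tangles in $B'$, under which the two notions of equivalence are matched.

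For part (1), I would invoke Lemma \ref{disc exists}: since $(W,\phi_1)$ is trivial there is a disc $D\subset W$ disjoint from $\phi_1$ whose boundary $\lambda$ is a preferred longitude, and $D$ is unique up to isotopy in $W\setminus\phi_1$. Because \emph{every} essential disc of $W$ disjoint from $\phi_1$ is of this form, the minima defining $\omega(\phi_1,\phi_2)=\omega(\phi_1,\phi_3)=1$ are attained using this same $D$; hence after an isotopy of $\phi_2$ (resp. $\phi_3$) relative to $\partial\psi$ I may assume $|D\cap\phi_2|=|D\cap\phi_3|=1$. Finally I slide the single point $D\cap\phi_2$ along an embedded path in the interior of $D$ to the point $D\cap\phi_3$; this is a local isotopy of $\phi_2$ supported near $D$ that keeps $|D\cap\phi_2|=1$, after which $D\cap\phi_2=D\cap\phi_3$ is one point $p$. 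I may further isotope both arcs so that they coincide with the cocore fibre through $p$ inside the $2$-handle $\nbhd(D)$.

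For part (2), cutting $W$ along $D$ yields the $3$-ball $B'=W\setminus\nbhd(D)$, and $\tau'_i=\phi_i\cap B'$ is a pair of arcs, so $(B',\tau'_i)$ is a complementary $2$-tangle to $(B,\tau)=(V[\lambda],\psi\cup\beta)$. Since the two arcs agree inside $\nbhd(D)$ (both equal the cocore arc $\beta'$ through $p$), the tangles $(B',\tau'_2)$ and $(B',\tau'_3)$ share the same four boundary marked points, and $K(\phi_i)=\psi\cup\beta'\cup\tau'_i=\tau\cup\tau'_i$, so each $\tau'_i$ is genuinely a complementary $2$-tangle with closure $K(\phi_i)$. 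The implication ``rationally equivalent $\Rightarrow$ equivalent'' is the routine direction: an isotopy of $B'$ relative to $\partial B'$ carrying $\tau'_2$ to $\tau'_3$ extends by the identity on $\nbhd(D)$ to an ambient isotopy of $W$ fixing $\partial\psi$ (and $\beta'$) and carrying $\phi_2=\beta'\cup\tau'_2$ to $\phi_3=\beta'\cup\tau'_3$.

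The main obstacle is the converse. Given an ambient isotopy $F_t$ of $W$, rel $\partial\psi$, with $F_1(\phi_2)=\phi_3$, I must upgrade it to an isotopy that fixes all of $\partial B'$ pointwise (the annulus $S\setminus\nbhd(\lambda)$ together with the two scars of $D$), so that it descends to $B'$. The trouble is that $F_t$ pins down only the two endpoints of the arcs, so a priori it moves $\lambda$, carries $D$ to some other meridian disc $D'=F_1(D)$ meeting $\phi_3$ once, and may drag a meridional Dehn twist along $D$ that would change the rational class of the $2$-tangle. The strategy is to first isotope $(W,\phi_3)$, rel $\phi_3$, so as to return $D'$ to $D$: putting $D$ and $D'=F_1(D)$ in minimal position and using uniqueness of meridian discs in a solid torus, I would run an innermost-circle/outermost-arc analysis of $D\cap D'$ -- each disc meeting $\phi_3$ exactly once -- to remove all intersections rel $\phi_3$ and identify $D'$ with $D$. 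After this normalization the composite isotopy preserves $D$ and the handle $\nbhd(D)$, and a final adjustment near $\partial W$ arranges that it fixes the annulus and the scars pointwise; restriction to $B'$ then yields the desired rational equivalence. The crux -- and where the once-crossing hypothesis and the fact that $\lambda$ is a preferred longitude must be used decisively -- is verifying that no nontrivial twist survives this identification, i.e. that the framing discrepancy between $D$ and $D'$ relative to $\phi_3$ is genuinely trivial rather than a hidden twist altering the rational tangle.
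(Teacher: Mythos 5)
Your part (1) and the ``rationally equivalent $\Rightarrow$ equivalent'' half of part (2) are correct and essentially identical to the paper's argument: Lemma \ref{disc exists} lets both minima be realized by one disc $D$, a local isotopy makes $\phi_2$ and $\phi_3$ coincide along the cocore arc near $D$, and an isotopy of $\tau'_3$ to $\tau'_2$ in $B'$ rel endpoints glues with the identity on $\nbhd(D)$ to give an equivalence of the 1-tangles. Note that this forward direction is the only one the paper relies on later (in Corollary \ref{small wrapping}).

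The converse is where your proposal has a genuine gap, and you have named it yourself without closing it. Two separate problems sit in your normalization scheme. First, to return $D' = F_1(D)$ to $D$ ``rel $\phi_3$'' you invoke ``uniqueness of meridian discs in a solid torus,'' but what you need is uniqueness, up to isotopy in the complement of $\phi_3$, of essential discs meeting $\phi_3$ \emph{exactly once}; Lemma \ref{disc exists} only gives this for discs disjoint from the arc, and once-punctured meridian discs need not be unique rel the arc (your innermost-circle/outermost-arc induction stalls precisely when an innermost circle or outermost arc cuts off a subdisc punctured by $\phi_3$). Second, even granting that $D'$ returns to $D$ rel $\phi_3$, the resulting self-homeomorphism of $W$ preserving $\nbhd(D)$ and $B'$ may restrict to a nontrivial power $T^n$ of the disc twist on $\nbhd(D)$, compensated by $-n$ twists of $B'$ along the core of the boundary annulus; this is compatible with being isotopic to $\mathrm{id}_W$, fixing $\partial\psi$, and preserving $\beta$, so nothing in your argument forces $n=0$. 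Since twisting along the scar discs is exactly the move that changes rational tangle classes, what you have proved is only ``rationally equivalent up to disc twists'': identifying the crux is not resolving it. For comparison, the paper (which also only sketches this direction, explicitly noting it is not needed for the main theorem) sidesteps both issues by working with the \emph{track} of the isotopy rather than its time-one map: it regards the isotopy as a map $F$ of a square into $W$, arranges that $F^{-1}(\beta)$ is a vertical strip, and then by surgery on $F$ that $F^{-1}(\nbhd(D))$ is a product neighborhood of that strip, so that $F$ restricts directly to a rational equivalence in $B'$; no repositioning of $D$ and no twist bookkeeping ever arises.
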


\begin{figure}[ht!]
\labellist
\small\hair 2pt
\pinlabel {$\phi_1$} [b] at 132 250
\pinlabel {$\phi_i$} [b] at 231 246
\pinlabel {$\psi$} [l] at 225 137
\pinlabel {$\phi_i \cap B'$} [b] at 592 259
\pinlabel {$\lambda$} [b] at 181 144
\pinlabel {$\phi_1$} [b] at 487 262
\pinlabel {$\beta$} [t] at 496 124
\pinlabel {$\psi$} [l] at 577 137
\endlabellist
\centering
\includegraphics[scale=0.5]{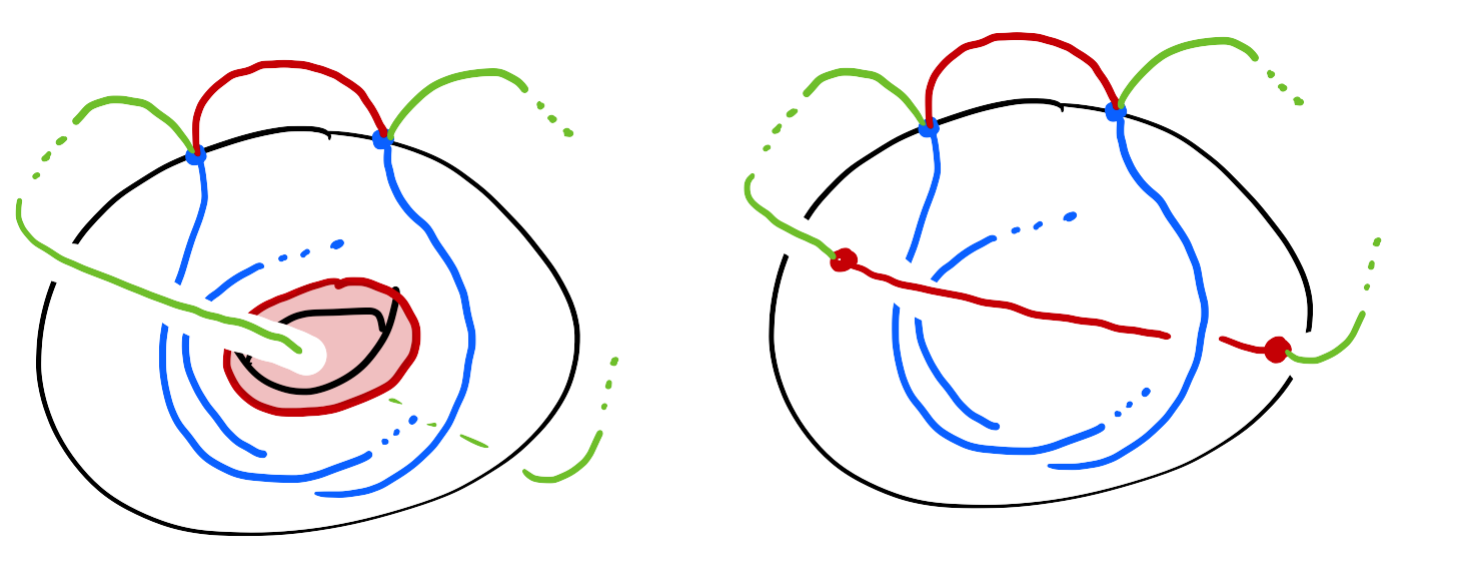}

\caption{When, for $i = 2,3$, we have $\omega(\phi_1, \phi_i) = 1$, the 1-tangle closure can be converted into a 2-tangle closure, as in Proposition \ref{reduction}. The arc $\beta$ is the cocore of the 2-handle attached along the preferred longitude $\lambda$. }
\label{distance 1}
\end{figure}

\begin{proof}
See Figure \ref{distance 1} for a depiction. By definition of $\omega$, there is an essential disc $D_1$ in $W$ disjoint from $\phi_1$ and intersecting $\phi_2$ in a single point. Similarly, there is an essential disc $D'_1$ in $W$ disjoint from $\phi_1$ and intersecting $\phi_3$ in a single point. By Lemma \ref{disc exists}, we may assume that $D_1 = D'_1$.  By an isotopy supported in a neighborhood of $D_1 = D'_1$ we may take the point $D_1 \cap \phi_3$ to the point $D_1 \cap \phi_2$. Furthermore, we may arrange that in a small regular neighborhood of $D_1$, the arc of intersection $\beta$ between $\phi_3$ and that neighborhood exactly coincides with the arc of intersection between $\phi_2$ and that neighborhood.

Let $(B, \tau)$ be the $\lambda$-induced 1-tangle from $(V, \psi)$. In the construction of $(B, \tau)$, use the aforementioned regular neighborhood of $D$ for the 2-handle and the arc $\beta$ for the cocore of the 2-handle. Let $B'$ be the closure of the complement of $B$. Note that $B' \subset W$. Let $(B', \tau'_i) = (B', B' \cap K(\phi_i))$ for $i = 2,3$. 

Suppose that $(B', \tau'_2)$ and $(B', \tau'_3)$ are rationally equivalent. This implies that there is an isotopy in $B'$, of $\tau'_3$, relative to its endpoints, taking $\tau'_3$ to $\tau'_2$. As $B' \subset W$, this induces an isotopy of $\phi_3$ to $\phi_2$, relative to its endpoints. The isotopy does not move the arc $\beta$. Hence, $(W, \phi_3)$ is equivalent to $(W, \phi_2)$.

As we do not rely on it, we simply sketch the converse. Assume $(W, \phi_3)$ is equivalent to $(W, \phi_2)$, and consider an isotopy to be realized by a (smooth or PL) map $F \co [0,1] \times [0,1] \to W$ with $F([0,1] \times \{0\}) = \phi_3$ and $F([0,1] \times \{1\}) = \phi_2$. Since $\phi_2$ and $\phi_3$ coincide along $\beta$, we may modify $F$ so that $F^{-1}(\beta)$ is $X = I \times [0,1]$ where $I \subset [0,1]$ is a closed subinterval. Standard surgery techniques then let us modify $F$ further so that the inverse image of the regular neighborhood $D$ is equal to $X$. The isotopy $F$ then restricts to a rational equivalence between $(B', \tau'_2)$ and $(B', \tau'_3)$. 
\end{proof}

\begin{lemma}\label{wrapping 1 lem}
Suppose that $(W, \phi_1)$ and $(W, \phi_2)$ are inequivalent trivial complementary 1-tangles. Then the following are equivalent:
\begin{enumerate}
\item $\omega(\phi_1, \phi_2) = 1$
\item $\omega(\phi_2, \phi_1) = 1$
\item If $a, b$ are preferred longitudes such that $a$ bounds an essential disc in $W$ disjoint from $\phi_1$ and $b$ bounds an essential disc in $W$ disjoint from $\phi_2$, then $a$ and $b$ can be isotoped in $\circ{S}$ so as to be disjoint.
\end{enumerate}
\end{lemma}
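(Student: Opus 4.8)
The plan is to prove the three statements equivalent by showing $(1) \Leftrightarrow (3)$ and $(2) \Leftrightarrow (3)$, exploiting the fact that statement $(3)$ is symmetric in $\phi_1$ and $\phi_2$, which immediately yields $(1) \Leftrightarrow (2)$ as a bonus once both biconditionals are established. Since the roles of $\phi_1$ and $\phi_2$ are interchangeable in $(3)$, it suffices to prove $(1) \Leftrightarrow (3)$; the equivalence $(2) \Leftrightarrow (3)$ then follows by symmetry. By Lemma \ref{disc exists}, the preferred longitudes $a$ and $b$ (together with their bounding discs $D_a \subset W \setminus \phi_1$ and $D_b \subset W \setminus \phi_2$) are each well-defined up to isotopy in $\circ{S}$, so condition $(3)$ is intrinsic to the pair $(\phi_1, \phi_2)$.

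First I would prove $(3) \Rightarrow (1)$, which is the easier direction. Assuming $a$ and $b$ can be made disjoint in $\circ{S}$, I would arrange their bounding discs $D_a, D_b$ to be disjoint in $W$ as well (using irreducibility of $W \setminus \phi_1$ and $W \setminus \phi_2$ and an innermost-circle argument like that in the proof of Lemma \ref{disc exists}). Since $a$ and $b$ are both preferred longitudes and $S$ is a torus, two disjoint essential discs whose boundaries are preferred longitudes are parallel, so they cobound a product region $D_a \times I$ in $W$. The arc $\phi_2$ is disjoint from $D_b$ and, since $(W,\phi_1)$ and $(W,\phi_2)$ are inequivalent by hypothesis, $\phi_2$ must cross $D_a$; counting intersections through the product region and using that $\omega(\phi_1,\phi_2) \geq 1$ (it cannot be $0$, else by Lemma \ref{lem: wrapping well def}(1) the tangles would be equivalent) shows $|D_a \cap \phi_2| = 1$, giving $\omega(\phi_1, \phi_2) = 1$.

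The harder direction is $(1) \Rightarrow (3)$, and I expect this to be the main obstacle. Assuming $\omega(\phi_1, \phi_2) = 1$, there is an essential disc $D_a \subset W$ disjoint from $\phi_1$ meeting $\phi_2$ in exactly one point. I would take $b$ to be a preferred longitude bounding a disc $D_b \subset W \setminus \phi_2$, put $D_b$ in minimal position against $D_a$, and show the single intersection point $D_a \cap \phi_2$ forces the configuration to untangle. The subtlety is that $a$ and $b$ live in the punctured surface $\circ{S}$, and two curves isotopic in $S$ need not be isotopic in $\circ{S}$; the key geometric input is that $\phi_2$ punctures $D_a$ exactly once, so when I boundary-compress $D_b$ along outermost arcs of $D_a \cap D_b$ (or vice versa) and track which side of the single puncture the arcs of $\boundary D_b$ lie, the puncture cannot obstruct sliding $b$ off $a$. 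I would run the argument so that each intersection arc can be removed by an isotopy supported away from $\boundary\psi$, using the Remark following Lemma \ref{Jaco lem} (that curves isotopic in $S$ and disjoint from an arc $\epsilon$ joining $\boundary\psi$ are isotopic in $S \setminus \epsilon$) to promote the isotopy in $S$ to an isotopy in $\circ{S}$.

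Finally, to assemble the full equivalence I would note that $(3)$ is manifestly symmetric under swapping the indices $1 \leftrightarrow 2$, so the proved biconditional $(1) \Leftrightarrow (3)$ immediately gives $(2) \Leftrightarrow (3)$ upon relabeling, and hence all three conditions are equivalent. The one bookkeeping point to handle with care throughout is the distinction between isotopy in $S$ and isotopy in $\circ{S}$, and I would foreground the cited Remark to ensure that whenever I produce a disjointness or parallelism of longitudes in $S$, it descends to the punctured surface $\circ{S}$ as required by the statement of $(3)$.
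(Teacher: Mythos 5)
Your overall architecture matches the paper's exactly: prove $(1) \Leftrightarrow (3)$, observe that $(3)$ is symmetric in $\phi_1$ and $\phi_2$, and deduce $(2) \Leftrightarrow (3)$ by relabeling. Your sketch of $(1) \Rightarrow (3)$ is also the paper's argument in spirit: put a disc $E$ bounded by $b$ and disjoint from $\phi_2$ in minimal position against the disc $D$ bounded by $a$, and use outermost-arc boundary-compressions together with the single puncture $D \cap \phi_2$ to eliminate all intersections. One caution there: the paper must choose $E$ to minimize $|E \cap \phi_1|$ \emph{before} minimizing $|D \cap E|$, and it is this auxiliary minimality that turns each disc swap and boundary-compression into a contradiction; without some such device, ``the puncture cannot obstruct sliding $b$ off $a$'' is an assertion rather than an argument.

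The genuine gap is in $(3) \Rightarrow (1)$. Having made $D_a$ and $D_b$ disjoint, you claim that ``counting intersections through the product region'' shows $|D_a \cap \phi_2| = 1$. No counting argument can do this: $\omega(\phi_1,\phi_2)$ is a minimum over isotopy representatives of $\phi_2$, and the given $\phi_2$, while disjoint from $D_b$, may cross $D_a$ many times --- excursions of $\phi_2$ into and back out of the region between the two discs contribute pairs of intersection points invisible to any parity or homological count, and removing them requires constructing an isotopy. The paper's mechanism is the one place where triviality of $(W,\phi_2)$ enters this direction, and you never invoke it: cut $W$ along $D_b$ to obtain a 3-ball $B$; since $\phi_2$ is boundary-parallel in $W$ and disjoint from $D_b$, it is boundary-parallel in $B$; since any two arcs in a twice-punctured sphere with endpoints at the punctures are isotopic, the arc of $\partial B$ to which $\phi_2$ is parallel can be isotoped rel endpoints to meet $a$ at most once (exactly once when $a$ separates $\partial \phi_2$ in $\partial B$, zero times otherwise); this induces an ambient isotopy of $\phi_2$ in $B \subset W$ realizing $|D_a \cap \phi_2| \leq 1$. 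Only then does Lemma \ref{lem: wrapping well def} plus inequivalence upgrade $\omega(\phi_1,\phi_2) \leq 1$ to equality, as you say. A secondary issue: even your preliminary step of making $D_a$ and $D_b$ disjoint is not a routine innermost-circle swap, because an innermost subdisc of $D_a$ may be punctured by $\phi_2$ (it is only guaranteed disjoint from $\phi_1$), so it cannot be swapped into $D_b$ without destroying $D_b \cap \phi_2 = \varnothing$; the paper resolves this by first isotoping $\phi_2$ to minimize $|D_a \cap \phi_2|$ and deriving a contradiction in both cases of the innermost-circle analysis.
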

\begin{proof}
Since statement (3) is symmetric in $\phi_1$ and $\phi_2$, it suffices to show that (1) and (3) are equivalent; for the same argument would show that (2) and (3) are equivalent. Let $a, b$ be preferred longitudes bounding discs in $W$ disjoint from $\phi_1, \phi_2$ respectively. Assume that $a, b$ have been isotoped in $\circ{S}$ so as to intersect minimally. 

Assume that $\omega(\phi_1, \phi_2) = 1$; we will show $a \cap b = \nil$. Perhaps changing $\phi_2$ by an isotopy in $W$ relative to its endpoints,  there exists an essential disc $D$ in $W$ such that $D \cap \phi_1 = \nil$, $|D \cap \phi_2| = 1$, and $a = \boundary D$. Let $E \subset W$ be an essential disc in $W$ disjoint from $\phi_2$ with $\boundary E = b$. Out of all such, assume that $|E \cap \phi_1|$ is minimal. Isotope $E$ in $W \setminus \phi_2$ so as to minimize $|D \cap E|$. Thus, $E \cap D$ consists of circles and arcs. Suppose $\zeta \subset D \cap E$ is a circle of intersection that is innermost in $D$. Let $\Delta_D \subset D$ and $\Delta_E \subset E$ be the discs it bounds. If $\Delta_D \cap \phi_2 = \nil$, we could perform a disc swap on $E$ to reduce $|D \cap E|$ without increasing $|E \cap \phi_1|$; this contradicts the choice of $E$. On the other hand if $\Delta_D \cap \phi_2 \neq \nil$, the intersection consists of a single point. Since $\Delta_E \cap \phi_2 = \nil$, we contradict the fact that $\Delta_D \cup \Delta_E$ bounds a 3-ball in $W$ and the endpoints of $\phi_2$ lie on $S$. Thus, $D \cap E$ has no circles of intersection. 

Similarly, suppose that $\zeta \subset D \cap E$ is an arc of intersection that is outermost in $D$, in the sense that it cobounds an unpunctured subdisc $\Delta \subset D$ with an arc of $\boundary D$ and with interior disjoint from $E$.  Use $\Delta$ to $\boundary$-compress $E$. This converts $E$ into two discs, one of which $E'$ is essential in $W$. Let $E''$ be the one that is inessential in $W$. Note that $|E' \cap \phi_1| \leq |E \cap \phi_1$ and $E' \cap \phi_2 = \nil$. The disc $E''$, together with a disc $F \subset S$, bounds a 3-ball $B \subset W$. If $B \cap \phi_2 = \nil$, then an isotopy of $E$ across $B$ produces a disc contradicting our original choice of $E$. On the other hand, if $B \cap \phi_2 \neq \nil$, observe that $\phi_2 \subset B$. A small isotopy makes $E'$ disjoint from $E$. At which point, the curves $\boundary E' \cup \boundary E$ cobound two annuli in $S$, one of which contains $\boundary \psi$. The union of a push off of the other annulus with the complement in $E'$ of a collar of $\boundary D'$ produces a disc with boundary $b$ contradicting our original choice of $E$. Hence, $a \cap b = \nil$.

Now assume that $a \cap b = \nil$. We will show that $\omega(\phi_1, \phi_2) = 1$. Let $D$ be a disc in $W$ disjoint from $\phi_1$ and with $\boundary D = a$. Let $E\subset W$ be a disc with $\boundary E = b$ and which is disjoint from $\phi_2$.  Isotope $\phi_2$, relative to its endpoints, so as to minimize $|D \cap \phi_2|$. This isotopy can be extended to an isotopy of $E$, supported outside a collar of $\boundary E$, so $E$ remains disjoint from $\phi_2$. In the complement of $\phi_2$, isotope $E$, relative to $\boundary E$, so as to minimize $E \cap D$. We will show that $|D \cap \phi_2| \leq 1$. 

Since $a \cap b = \nil$, the intersection $E \cap D$ consists of circles. Let $\zeta \subset E \cap D$ be a circle that is innermost in $D$; bounding a disc $\Delta_D \subset D$. The circle $\zeta$ also bounds a disc $\Delta_E \subset E$, though it need not be innermost. Then $\Delta_D \cup \Delta_E$ is a sphere bounding a 3-ball $B \subset W$. Since $\Delta_E$ is disjoint from $\phi_2$, we may use a product structure on $B$ between $\Delta_D$ and $\Delta_E$ to isotope $\phi_2 \cap B$ entirely out of $B$. (This isotopy may require $\phi_2$ to be isotoped across $\phi_1$.) This reduces $|D \cap \phi_2|$, a contradiction unless $\phi_2 \cap B = \nil$. However, in that case, we may isotope $\Delta_E$ across $B$, reducing $|D \cap E|$, another contradiction. Thus, $E \cap D = \nil$. 

A $\boundary$-reduction of $W$ using $E$ converts $W$ into a 3-ball $B$ with two discs $E_\pm \subset \boundary W$ that are the scars from $E$. The annulus $A = \boundary B \setminus E_- \cup E_+$ contains $\boundary \phi_2$ as well as the curve $a$. The arc $\phi_2$ is properly embedded in $B$, as is the disc $D$. Since $\phi_2$ is $\boundary$-parallel in $W$ and since $E$ is disjoint from $\phi_2$, the arc $\phi_2$ is $\boundary$-parallel in $B$. Let $\epsilon_2$ be the arc in $\boundary B$ to which $\phi_2$ is parallel. Any two arcs in a twice-punctured sphere with endpoints at the punctures are isotopic in the sphere, so in $\boundary B$, the arc $\epsilon_2$ is isotopic, relative to its endpoints, either to an arc in $A$ disjoint from $a$ or to an arc in $A$ intersecting $a$ exactly once. The former occurs when $a$ does not separates the endpoints of $\epsilon_2$ and the latter when it separates the endpoints of $\epsilon_2$. This isotopy induces an ambient isotopy of $\phi_2$ in $B \subset W$, relative to its endpoints, to an arc that intersects $D$ at most once. 

Thus, $\omega(\phi_1, \phi_2) \leq 1$. However, by Lemma \ref{lem: wrapping well def}, it must equal 1 since $(W, \phi_1)$ and $(W, \phi_2)$ are inequivalent. Thus, (1) is equivalent to (3). 
\end{proof}

The next proposition addresses the proof of Theorem \ref{Main Thm} in the case of low wrapping number.

\begin{corollary}\label{small wrapping}
    Suppose that $(V,\psi)$ is nontrivial and that $(W, \phi_1)$ is a complementary 1-tangle such that $K(\phi_1)$ is the unknot. There is at most one other complementary 1-tangle $(W, \phi_2)$ such that both $K(\phi_2)$ is the unknot and $\omega(\phi_1, \phi_2) = 1$.
\end{corollary}
\begin{proof}
Let $(V, \psi)$ be a nontrivial 1-tangle. Assume that $(W, \phi_1)$, $(W, \phi_2)$, and $(W, \phi_3)$ are complementary 1-tangles such that $K(\phi_i)$ is the unknot for $i \in \{1,2,3\}$. Assume, that $\omega(\phi_1, \phi_2) = 1 = \omega(\phi_1, \phi_3)$.  By Proposition \ref{trivial}, each $(W, \phi_i)$ is trivial.

Let $D \subset W$ be a disc disjoint from $\phi_1$, with $\lambda = \boundary D$  a preferred longitude and with $D \cap \phi_2 = D \cap \phi_3$ a single point. Proposition \ref{reduction} guarantees that there is such a disc. Let $(B, \tau)$ be the $\lambda$-induced 2-tangle from $(V, \psi)$ and let $B'$ be the 3-ball complementary to $B$. Then both $(B', K(\phi_2) \cap B')$ and $(B', K(\phi_3) \cap B')$ are complementary 2-tangles to $(B, \tau)$. Since $(B, \tau)$ is nontrivial by Lemma \ref{lem: nontriv} and has an unknot closure, it must be prime. Thus, by Theorem \ref{2-tangle thm},  $(B', K(\phi_2) \cap B')$ and $(B', K(\phi_3) \cap B')$ are rationally equivalent. Consequently, by Proposition\ref{reduction}, $(W, \phi_2)$ and $(W, \phi_3)$ are equivalent.
\end{proof}

\section{Discs and band sums}\label{band sums}

The basic strategy for proving Theorem \ref{Main Thm} is to rely on results from sutured manifold theory. We will be applying these to the 3-manifold $N$, observing that $\boundary N$ is an orientable genus 2 surface. As is frequently done, certain surfaces in the 3-manifold will used to understand the sutured manifold. We will apply the results in this section to two different closures of a 1-tangle and will study the surfaces corresponding to each. We will have two different preferred longitudes $a, b$ with surfaces $\Delta$ and $Q = \Delta \cap N$ associated to $a$ and surfaces $F'$ and $\Sigma' = F' \cap N$ associated to $b$ respectively. In order to avoid confusion when applying the results of this section, any result that will be applied in both scenarios will be phrased for a preferred longitude $c$ and surfaces $X$ and $R = X \cap N$. An result that is applied in just one of the scenarios will be stated with the appropriate notation.

For the remainder of this section, let $(W, \phi)$ be a trivial complementary 1-tangle and let $a$, $b$, or $c$ denote a preferred longitude bounding a disc in $W$ disjoint from $\phi$. Let $\Delta$, $F'$, or $X$ (respectively) denote an incompressible Seifert surface for $K(\phi)$ transverse to $S$. The intersection of the Seifert surface with $S$ will contain a unique arc which we denote $\epsilon_a$, $\epsilon_b$, or $\epsilon_c$ (respectively). Let $a^*$, $b^*$, or $c^*$ denote a simple closed curve in $S$ bounding a disc in $S$ containing the arc $\epsilon_a$, $\epsilon_b$, or $\epsilon_c$ (respectively) in its interior. Let $Q$, $\Sigma'$, or $R$ denote the intersection of the Seifert surface with $N$. Let $\boundary_{\dag}$ denote the boundary components of $Q$, $\Sigma'$, or $R$ which are parallel in $\circ{S}$ to the curve $\dag \in \{a, b, c, a^*, b^*, c^*\}$ (if any). Note that $\phi$ determines the curve $c$ up to isotopy in $\circ{S}$ but not the arc $\epsilon_c$ or the curve $c^*$. In all cases, we let $\beta$ be the cocore of the 2-handle in $N[a]$, $N[b]$, or $N[c]$ attached along $a$, $b$, or $c$ respectively. We use variations of this notation throughout the rest of the paper.

\begin{definition}
Suppose that $R \subset N$ is a properly embedded surface and that $d \subset \circ{S}$ is a simple closed curve transverse to $\boundary R$. A disc $D \subset N$ is a \defn{$d$-$\boundary$-compressing disc} for $R$ if $D$ is embedded in $N$, $\boundary D$ is the endpoint union of an arc in $R$ and a component of $d\setminus \boundary Q$. 
\end{definition}

\begin{definition}
We say that $X$ is \defn{aligned} with $\phi$ if the following hold:
\begin{enumerate}
\item $X \cap S = \epsilon_c \cup \boundary_c R \cup \boundary_{c^*} R$
\item $X \cap W$ is the union of discs;
\item $R$ is incompressible in $N$.
\end{enumerate}

See Figure \ref{fig:normal} for two examples of aligned Seifert surfaces. When $X$ is aligned with $\phi$, we let $R_{0}$ be the disc in $X \cap W$ with boundary $\phi \cup \epsilon_c$. (Note that $R_0 \subset X \setminus R$.)

A collar neighborhood of $K(\phi)$ in $\boundary X$ is a \defn{standard collar} with respect to $X$ if it intersects each disc of $\nbhd(\boundary \psi) \subset S$ in a single arc each. See Figure \ref{fig:stdcollar}. 

The Seifert surface $X$ is \defn{standard} with respect to $\phi$ if it is aligned with $\phi$, and $X \setminus N[c]$ is a standard collar. Notice that this implies that $\boundary_{c^*} R = \nil$.

The Seifert surface $X$ is \defn{normal} with respect to $\phi$ if it is aligned with $\phi$ and either $X \cap S = \epsilon_c$ or  $R$ does not admit a $d$-$\boundary$-compressing disc for any simple closed curve $d \subset \circ{S}$ which, up to isotopy in $\circ{S}$, intersects $\boundary R$ minimally.
\end{definition}

\begin{figure}[ht!]
\labellist
\small\hair 2pt
\pinlabel {$\phi$} [b] at 222 435
\pinlabel {$b$} [l] at 345 179
\pinlabel {$F$} at 222 377
\pinlabel {$\psi$} [t] at 76 153
\endlabellist
\centering
\includegraphics[scale=0.4]{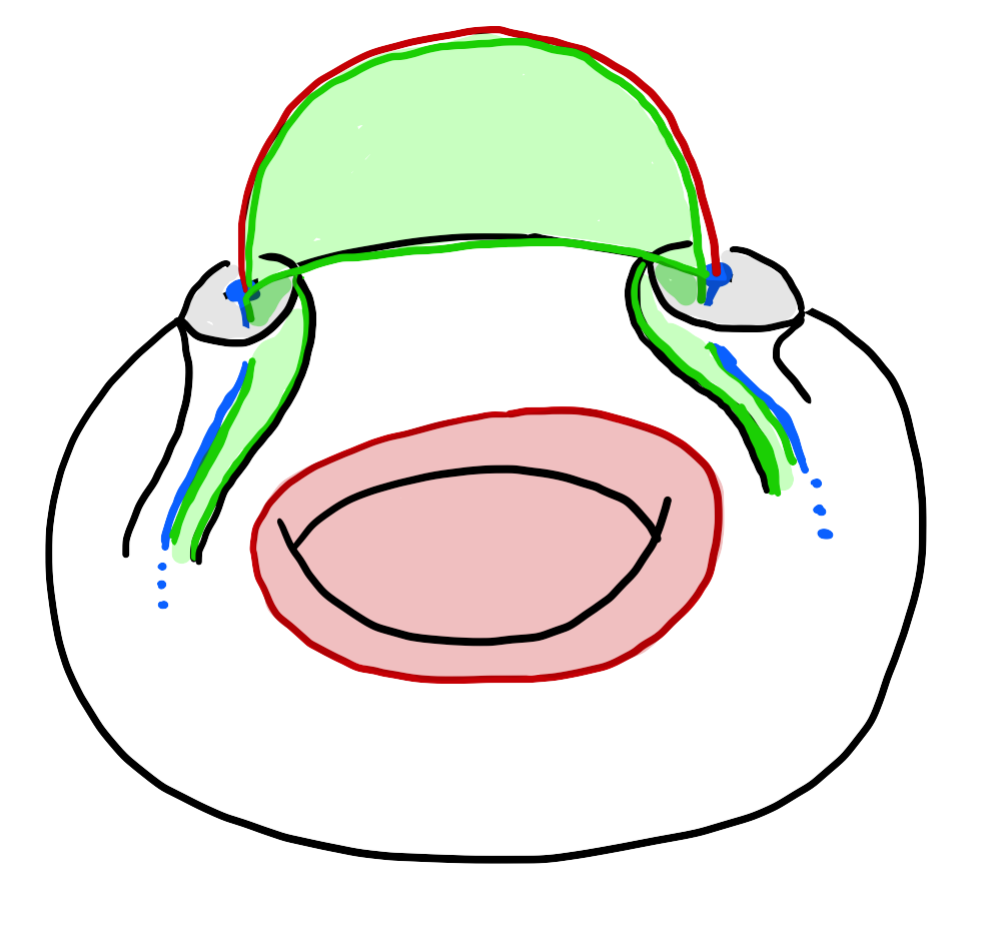}
\caption{A schematic depiction of a standard collar of a Seifert surface for a 1-tangle closure $K(\phi)$. The collar is shown in green. }
\label{fig:stdcollar}
\end{figure}

\begin{figure}[ht!]
\centering
\includegraphics[scale=0.6]{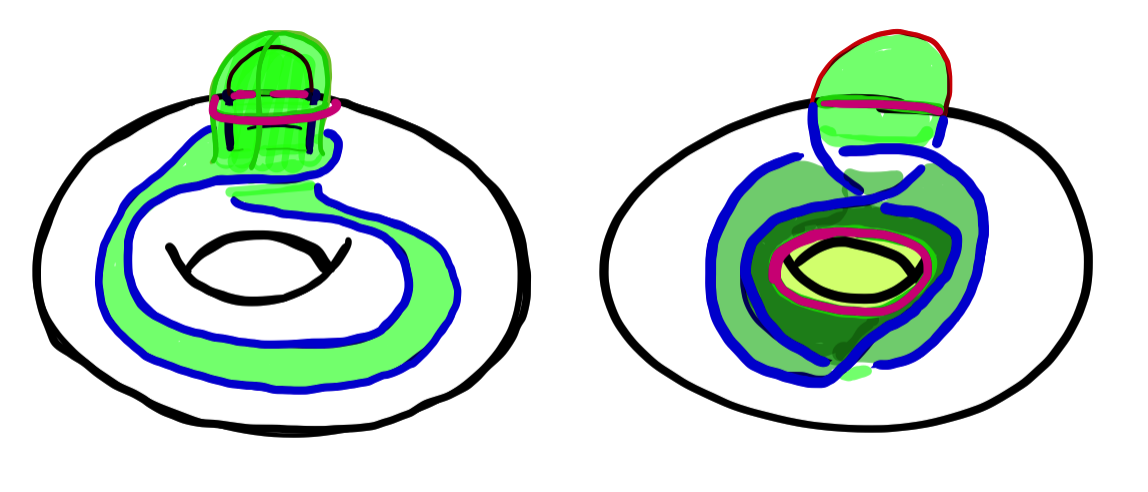}
\caption{On the left is an example of an aligned disc Seifert surface. Notice $|\boundary_{c^*} R| = 1$ and $\boundary_c R = \nil$. On the right is an example of an aligned genus 1 Seifert surface with $|\boundary_c R| = 2$ and $\boundary_{c^*} R = \nil.$ The Seifert surface on the left is a disc obtained by starting with a flat elliptical disc and then pushing one end around the solid torus $V$ and through the disc, creating a bulge. The Seifert surface on the right is the one obtained from Seifert's algorithm applied to the given diagram of the Figure 8 knot.}
\label{fig:normal}
\end{figure}

When $(W, \phi)$ is trivial, it can be helpful to consider the 3-manifold $N$ from the perspective of $K(\phi)$. If $c \subset \circ{S}$ is a preferred longitude bounding a disc in $W$ disjoint from $\phi$, then the 3-manifold $N[c]$ is the closure of the complement of $\nbhd(K(\phi))$. If $\beta$ is the cocore of the 2-handle in $N[c]$ attached along $c$, the 3-manifold $N$ is then the exterior of the arc $\beta$. If $X$ is standard with respect to $\phi$, then $X\setminus N[c]$ is a standard collar with respect to $\phi$ and $R[c]$ is transverse to $\beta$. Conversely, if $X$ is any Seifert surface for $K(\phi)$ such that $X \setminus N[c]$ is a standard collar,  and such that $X$ is transverse to $\beta$, then $X$ is standard with respect to $\phi$ if and only if $R[c] \setminus \beta$ is incompressible in $N = N[c]\setminus \beta$. 

\begin{lemma}\label{standard position}
Assume that $(V, \psi)$ is a nontrivial 1-tangle without a local knot and that $(W, \phi)$ is a trivial complementary 1-tangle. Suppose that $X$ is incompressible. If $X\setminus N[c]$ is a standard collar and $X$ has been isotoped, via an isotopy supported in a small regular neighborhood of $N[c]$, so as to intersect $\beta$ minimally, then $X$ is standard with respect to $\phi$.
\end{lemma}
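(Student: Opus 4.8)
The plan is to deduce the conclusion from the criterion recorded in the paragraph immediately preceding the lemma. Since $X \setminus N[c]$ is a standard collar by hypothesis and $X$ is transverse to $\beta$ (which we may assume, as $|X \cap \beta|$ is minimal), that criterion says $X$ is standard with respect to $\phi$ precisely when $R = R[c]\setminus \beta = X \cap N$ is incompressible in $N$. So the whole task reduces to proving that $R$ is incompressible in $N$. I would use three standing facts: $N[c]$ is the exterior of $K(\phi)$ and hence irreducible; $X$ is incompressible in $N[c]$ by assumption; and, writing $X = R \cup (\text{capping discs})$ with each capping disc meeting $\beta$ exactly once, we have $R \cap \beta = \nil$ while every capping disc contributes a point of $X \cap \beta$.

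First I would argue by contradiction, taking a compressing disc $E \subset N$ for $R$, so that $\boundary E \subset R$ is essential in $R$ and $\interior(E) \cap R = \nil$. Since $E \subset N = N[c]\setminus \nbhd(\beta)$, the disc $E$ is disjoint from $\beta$ and from the capping discs, and so in fact $\interior(E) \cap X = \nil$. Thus $E$ is a disc in $N[c]$ with boundary on $X$ and interior off $X$; were $\boundary E$ essential in $X$, it would compress $X$ in $N[c]$, contradicting the incompressibility of $X$. Hence $\boundary E$ bounds a disc $E' \subset X$. As $\boundary E$ is essential in $R$, the disc $E'$ cannot lie entirely in $R$, so it contains at least one capping disc and therefore meets $\beta$.

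Finally I would run a ball argument. The sphere $P = E \cup E'$ bounds a ball $B$ in the irreducible manifold $N[c]$. Because $\interior(E) \cap X = \nil$ and $E' \subset X$, one has $X \cap P = E'$; consequently $X \setminus \interior(E')$ meets $\boundary B$ only in the circle $\boundary E'$, and since it is connected and contains $\boundary X = K(\phi)$ (which lies on $\boundary N[c]$, outside $B$), it lies entirely outside $B$. Hence $\interior(B) \cap X = \nil$, so exchanging the subdisc $E'$ of $X$ for $E$ across $B$ is an ambient isotopy of $X$, supported in $\nbhd(B) \subset \nbhd(N[c])$ and fixing the standard collar. This isotopy deletes the nonempty set $E' \cap \beta$ from $X \cap \beta$ and introduces no new intersections, strictly lowering $|X \cap \beta|$ and contradicting minimality. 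Therefore $R$ is incompressible in $N$, and by the criterion $X$ is standard with respect to $\phi$.

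The step I expect to be most delicate is the last one: guaranteeing that the disc exchange is a genuine \emph{isotopy} of $X$, not merely a surgery to a different surface, so that the drop in $|X \cap \beta|$ really does contradict minimality. The key point making this work is the identity $X \cap P = E'$, which pins all of $X \setminus \interior(E')$ outside $B$ and forces $\interior(B)$ to be disjoint from $X$; here I would want to be careful to use the connectivity of $X$ (Seifert surfaces being connected) and to note that $B$ is the component of $N[c]\setminus P$ disjoint from $\boundary N[c]$, so that $K(\phi)$ genuinely lies outside it.
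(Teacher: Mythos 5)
Your proof is correct and follows essentially the same route as the paper's (much terser) argument: the paper likewise reduces the lemma to showing $R$ is incompressible, noting that a compressing disc for $R$, combined with the incompressibility of $X$ and the irreducibility of $N[c]$, produces an isotopy of $X$ relative to the collar $X \setminus N[c]$ that reduces $|X \cap \beta|$, contradicting minimality. Your write-up simply fills in the details the paper leaves implicit --- the disc $E' \subset X$, the sphere $E \cup E'$ bounding a ball, and the disc swap being a genuine ambient isotopy supported inside $N[c]$.
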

\begin{proof}
As we discussed, we may consider $N[c]$ as the exterior of a regular neighborhood of $K(\phi) = \boundary X$. Hence, we may isotope $X$, relative $\boundary X$ so that $X \setminus N[c]$ is a collar neighborhood of $\boundary X$, which we may assume is disjoint from the endpoints of $\beta$. A small isotopy also ensures that $X$ is transverse to $\beta$. If $R$ is compressible, then since $X$ is incompressible and $N[c]$ is irreducible, we may isotope $X$ relative to $(X \setminus N[c])$ to reduce $|X \cap \beta|$. The lemma follows.
\end{proof}

To achieve normality, we have to work harder.

\begin{definition}\label{curve types}
See Figure \ref{fig:compressiontypes}. Suppose that $D$ is a $\boundary$-compressing disc for $Q$ such that $\boundary D \cap \boundary N$ is an arc $\delta \subset S$. For $x,y \in \{a, a^*, \epsilon_a\}$, we say that $D$ is an $(xy)$-compression if it joins the corresponding components of $\boundary Q \cap S$. 
\end{definition}

\begin{figure}[ht!]
\centering
\includegraphics[scale=0.4]{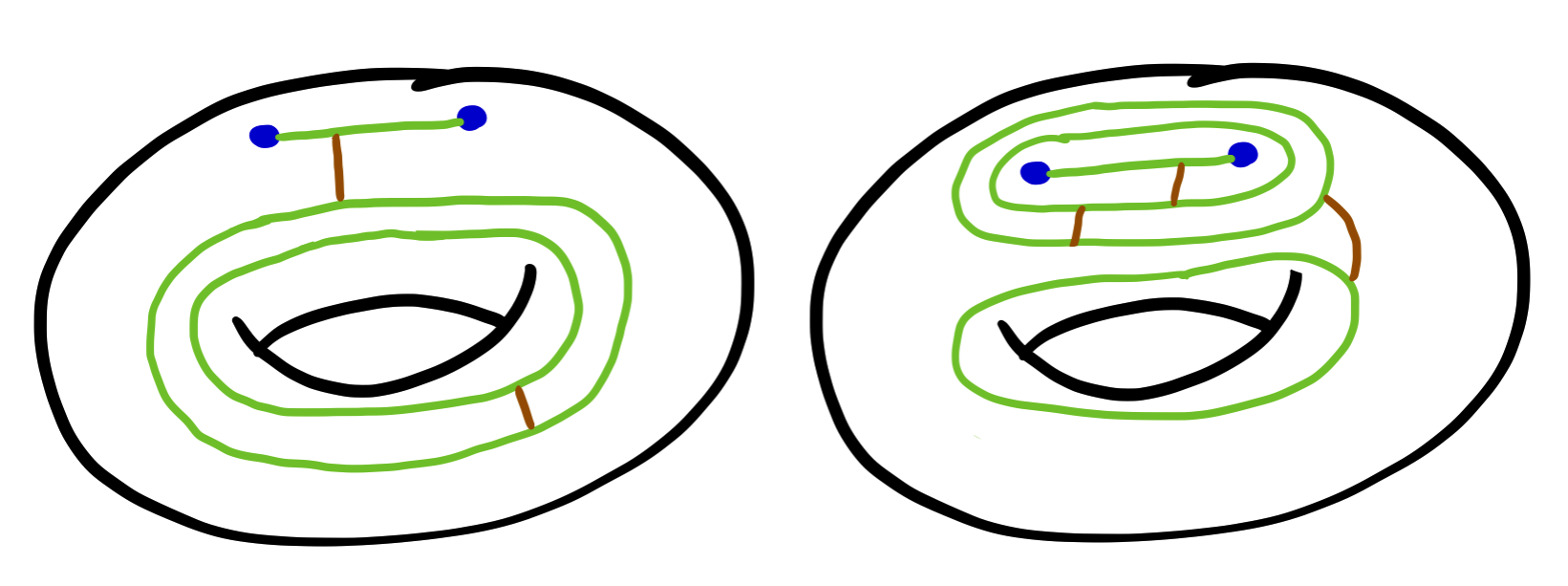}
\caption{We show the surface $S$ with the two points of $\boundary \psi$. The green curves represent the arc $\epsilon_a$ and the components of $\boundary_a Q$ and $\boundary_{a^*} Q$. The brown arcs are possibilities for the arc $\delta$. On the left we show a $(\epsilon_a a)$-compression and a $(aa)$ compression. In the middle, we show a $(\epsilon_a a^*)$-compression, a $(a^* a^*)$-compression, and a $(a a^*)$-compression. }
\label{fig:compressiontypes}
\end{figure}

\begin{lemma}\label{normal position}
Suppose that $\Delta$ is incompressible. Then there is an isotopy of $\Delta$, relative to $K(\phi)$, such that after the isotopy $\Delta$ is normal with respect to $\phi$. If $\Delta$ was originally aligned with $\phi$, then this isotopy is supported in a small regular neighborhood of $N[a]$.
\end{lemma}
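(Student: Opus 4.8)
The plan is to proceed in two stages: first isotope $\Delta$ into aligned position by a (possibly global) isotopy, and then, working entirely within a collar of $N[a]$, remove $\boundary$-compressing discs one at a time by isotopies that reduce the number of intersection circles. Since the second stage never disturbs alignment and is supported near $N[a]$, the final clause of the lemma follows automatically: if $\Delta$ is aligned to begin with, the first stage is vacuous and only the local second stage is needed.

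\textbf{Stage one (alignment).} First make $\Delta$ transverse to $S$. Because $K(\phi)$ meets $S$ in the two points $\boundary\psi$, and each is the endpoint of a single arc of $\Delta\cap S$, the intersection $\Delta\cap S$ is one arc $\epsilon_a$ joining the points of $\boundary\psi$ together with some simple closed curves. An innermost-circle argument, using that $\Delta$ is incompressible and that both $N$ and $W\setminus\phi$ are irreducible (the latter as in the proof of Lemma \ref{disc exists}), removes every circle of $\Delta\cap S$ bounding a disc on either side and arranges $\Delta\cap W$ to be incompressible in $W\setminus\phi$. Since $(W,\phi)$ is trivial, the preferred-longitude disc supplied by Lemma \ref{disc exists} then forces $\Delta\cap W$ to be a union of discs whose boundary circles are, in $\circ{S}$, parallel either to $a$ or to $a^*$; the same incompressibility lets me arrange $Q=\Delta\cap N$ incompressible in $N$. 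This is precisely the aligned condition.

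\textbf{Stage two (normality).} Assign to an aligned $\Delta$ the complexity $|\boundary_a Q|+|\boundary_{a^*}Q|$, the number of intersection circles. If $\Delta$ is not normal and $\Delta\cap S\neq\epsilon_a$, then by definition there is a simple closed curve $d\subset\circ{S}$ meeting $\boundary Q$ minimally and a $d$-$\boundary$-compressing disc $D$ for $Q$, with $\boundary D=\alpha\cup\delta$, $\alpha\subset Q$ and $\delta\subset d$. I run through the possible types of $D$ in the sense of Definition \ref{curve types}: $(aa)$, $(a^*a^*)$, $(aa^*)$, $(\epsilon_a a)$, and $(\epsilon_a a^*)$. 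In each admissible case the circle(s) of $\boundary Q$ met by $\delta$ bound discs of $\Delta\cap W$, so $D$ may be completed by subdiscs of those $W$-side discs to a disc across which $\Delta$ is isotoped; this realizes the $\boundary$-compression as an honest isotopy of $\Delta$ rel $K(\phi)$, supported in a neighborhood of $\delta$, that merges or cancels intersection circles and strictly lowers the complexity while preserving alignment. Minimality of $|d\cap\boundary Q|$ together with irreducibility of $N$ excludes the remaining configurations. The complexity is a nonnegative integer that strictly decreases, so after finitely many moves either $\Delta\cap S=\epsilon_a$ or no $d$-$\boundary$-compressing disc exists for any minimally-intersecting $d$; either way $\Delta$ is normal. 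All such moves take place in $N$ and in a collar of $S$ pushed slightly into $W$, i.e. in a small regular neighborhood of $N[a]$, which yields the final clause.

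The main obstacle is Stage two's case analysis, and specifically the $a^*$-compressions. Since $a^*$ bounds a disc in $S$ rather than an essential curve in $W$, the $(a^*a^*)$, $(aa^*)$ and $(\epsilon_a a^*)$ cases require care to verify that the isotopy realizing the $\boundary$-compression keeps $\Delta\cap W$ a union of discs with boundaries still parallel to $a$ or $a^*$, so that alignment survives, and genuinely lowers the circle count rather than trading one circle for another. I also have to ensure the reduction is monotone in a single complexity: a $\boundary$-compression valid for one curve $d$ must not raise the count for another, which is why the type analysis is organized to always remove, never create, intersection circles, and why the minimality hypothesis on $d$ is invoked to rule out the splitting configurations.
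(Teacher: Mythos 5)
Your overall strategy is the same as the paper's: first isotope $\Delta$ into aligned position, then induct on the number of components of $\Delta \cap S$, using a $d$-$\boundary$-compressing disc to guide an isotopy that reduces that count while preserving alignment. However, there is a genuine gap in your Stage two, and it sits exactly at the hard part of the argument. First, your enumeration of compression types omits the $(\epsilon_a \epsilon_a)$-compressions allowed by Definition \ref{curve types}, and these cannot be ruled out: minimality of $|d \cap \boundary Q|$ constrains their shape (it forces $\delta$ together with a subarc of $\epsilon_a$ to form a curve parallel in $\circ{S}$ to $a$) but does not exclude them, and irreducibility of $N$ does not help here either. Second, your central claim --- that each admissible compression ``merges or cancels intersection circles and strictly lowers the complexity'' --- is false for any compression whose arc $\delta$ has both endpoints on the \emph{same} component of $\Delta \cap S$: such a compression \emph{splits} that component, raising the circle count by one, and simultaneously converts the corresponding component of $\Delta \cap W$ (e.g.\ the disc $\Delta_0$, in the $(\epsilon_a\epsilon_a)$ case, or a disc bounded by a circle of $\boundary_a Q$, in the self-$(aa)$ case) into an annulus, breaking alignment. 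Your plan to ``organize the type analysis to always remove, never create, intersection circles'' is not something you can arrange; the compressing disc is handed to you by the failure of normality, and it may well be of the splitting type.

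The paper closes this gap with a mechanism your proposal does not contain: after performing such a splitting compression, the newly created curve $a'$ bounds a disc in $W$ disjoint from $\Delta$, and --- using incompressibility of the \emph{entire} surface $\Delta$, not merely of $Q$ --- it also bounds a subdisc $\Delta' \subset \Delta$ whose interior must meet $S$; irreducibility of $N[a]$ then lets one swap $\Delta'$ for a disc in $W$ and push it off $S$, eliminating $a'$ and achieving a strict net decrease in $|\Delta \cap S|$. Without this second step the induction does not close. Note also that you flag the $a^*$-cases as the delicate ones, but those are handled just like the generic merging cases; the genuinely delicate configurations are precisely the same-component compressions you either omit ($(\epsilon_a\epsilon_a)$) or assume away (self-$(aa)$, self-$(a^*a^*)$). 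Your Stage one is serviceable in outline, though the claim that every circle of $\Delta \cap S$ is parallel in $\circ{S}$ to $a$ or $a^*$ needs more justification than an appeal to Lemma \ref{disc exists} (a curve bounding a twice-punctured disc in $S$ need not be parallel to $a^*$ in $\circ{S}$); the paper avoids this by quoting Lemma \ref{standard position}.
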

\begin{proof}
By Lemma \ref{standard position}, we may isotope $\Delta$, relative to $K(\phi_1)$ so that it is aligned with $\phi_1$, if it was not already.  It may  be that $\boundary_{a^*} Q \neq \nil$. If $\boundary Q = \epsilon_a$, then the result is immediate. Suppose, therefore, that $\boundary_a Q \cup \boundary_{a^*} Q \neq \nil$. Assume $Q$ admits a $d$-$\boundary$-compressing disc $D$ for some simple closed curve $d \subset \circ{S}$ intersecting $\boundary Q$ minimally up to isotopy in $\circ{S}$. Let $\delta = \boundary D \cap S \subset d$.

We will examine the possibilities for the locations of $\boundary \delta$; in each case the disc $D$ will guide an isotopy of $\Delta$ preserving the fact that $\Delta$ is aligned with $\phi$ and reducing $|\boundary Q|$.

Each component of $S \setminus (\boundary_a Q \cup \boundary_{a^*} Q)$ is one of the following:
\begin{enumerate}
    \item[(i)] an annulus disjoint from $\epsilon_a$
     \item[(ii)] an annulus containing $\epsilon_a$
      \item[(iii)] a pair of pants disjoint from $\epsilon_a$ with two components in $\boundary_a Q$ and one in $\boundary_{a^*} Q$,
    \item[(iv)] a disc containing $\epsilon_a$
\end{enumerate}

Let $S_0 \subset S \setminus (\boundary_a Q \cup \boundary_{a^*} Q$ be the component containing $\delta$.

If $S_0$ is of type (i), then $Q$ would be compressible, a contradiction. 

If $S_0$ is of type (ii), then $D$ is $(\epsilon_a \epsilon_a)$-compression, a $(\epsilon_a a)$-compression, or an $(aa)$-compression. If it is a $(\epsilon_a \epsilon_a)$-compression, by the minimality of $|d \cap \boundary Q|$, the arc $\delta$, together with a subarc of $\epsilon_a$ forms a curve parallel in $\circ{S}$ to $a$. Use $D$ to isotope $\Delta$ so as to perform the $\boundary$-compression of $Q$. This converts the disc $\Delta_0$ into an annulus with one component $a'$ parallel to $a$. Since $a$ bounds a disc in $W$ disjoint from $\phi$ and since $\Delta$ is incompressible, $a$ bounds a disc $\Delta' \subset \Delta$. If $\Delta'$ had interior disjoint from $S$ then the arc $\boundary D \cap \Delta$ would have been inessential in $\Delta$. Thus, the interior of $\Delta'$ intersects $S$. The curve $a'$ bounds a disc in $W$ with interior disjoint from $\Delta$ and $N[a]$ is irreducible, so we may isotope $\Delta'$ to lie in $W$ and then isotope a bit further to eliminate the curve $a'$ from $\Delta \cap S$. Thus, we have an isotopy of $\Delta$, supported in a small neighborhood of $N[c]$ which strictly reduces $|\Delta \cap S|$ and preserves the fact that it is aligned with $\phi$.

Similarly, if $D$ is a $(\epsilon_a a)$-compression, we may use it to isotope $\Delta$, via an isotopy transverse to $\boundary N[a]$ to eliminate a point of $\Delta \cap \beta$ by sliding $\boundary \Delta$ across an endpoint of $\beta$. In this case, as in the previous one, the arc $\epsilon_a$ has changed. If $D$ is an $(aa)$-compression, it either joins a component of $\boundary_{a} Q$ to itself or joins distinct components of $\boundary_a Q$. In the first case, the isotopy guided by $D$ converts a component of $\boundary_a Q$ into a component of $\boundary_a Q$ and a component of $\boundary_{a^*} Q$ and the component of $\Delta \cap W$ containing $\boundary \delta$ into an annulus $A$. The components of $\boundary A$ each bound discs in $W\setminus \phi$, so as in the case of the $(\epsilon_a \epsilon_a)$-compression, we may isotope $\Delta$, via an isotopy supported in a small neighborhood of $N[c]$ so as to strictly reduce $|\Delta \cap S|$ and preserve the fact that $\Delta$ is aligned with $\phi$.  In the second case, the isotopy from $D$ converts two components of $\boundary_a Q$ into a component of $\boundary_{a^*} Q$ and in $W$ it bands to discs together into a new disc. We again have the isotopy we are looking for. 

If $S_0$ is of type (iii), then $D$ is either an $(aa)$, $(aa^*)$, or $(a^* a^*)$-compression. Each case is much like the cases we have already considered and we again find the isotopy we were seeking. 

Finally, if $S_0$ is of Type (iv), $D$ must be an $(\epsilon_a a^*)$-compression. An isotopy of $\Delta$ using $D$ eliminates a component of $\boundary_{a^*} Q$ and changes the arc $\epsilon_a$. In $W$, it bands two discs together and we again find the isotopy we were seeking. 
\end{proof}

We now present two additional observations one concerning normal surfaces and the other concerning standard surfaces.

\begin{definition}
Suppose that $L \subset S^3$ is a 2-component split link and that $B = I \times I$ is embedded in $S^3$ such that one component of $(\boundary I )\times I$ is contained in each component of $L$. The knot $K = (L \setminus (\boundary I \times I)) \cup (I \times \boundary I)$ is obtained by a \defn{band sum} of $L$ using the band $B$ to the link $L$. The band sum is \defn{trivial} if there exists a 2-sphere in $S^3$ whose intersection with $B$ is a single arc joining the components of $I \times \boundary I$.
\end{definition}

\begin{lemma}\label{bubbles}
Suppose that $(V,\psi)$ is a nontrivial 1-tangle and that $(W, \phi_1)$ and $(W,\phi'_2)$ are complementary 1-tangles, with $K(\phi_1)$ an unknot. Let  $\Delta$ be a disc with boundary $K(\phi_1)$ which is  normal with respect to $\phi_1$. Let $a$ be a preferred longitude bounding a disc in $W$ disjoint from $\phi_1$,  and let $Q = \Delta \cap N$. Assume that $(W, \phi'_2)$ is not equivalent to $(W, \phi)$, that it does not contain a local knot, and that $\phi'_2 \cap (\Delta_0)= \boundary \psi$. If $\boundary_a Q = \nil$, then $K(\phi'_2)$ is a nontrivial band sum of $\kappa$ and $K(\phi_1)$.
\end{lemma}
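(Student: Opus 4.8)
The plan is to read the three arcs $\psi$, $\phi_1$, $\phi'_2$, which share the two endpoints $\boundary\psi$, as the edges of a spatial theta-graph. Its three constituent knots are $K(\phi_1)=\psi\cup\phi_1$ (an unknot bounding the normal disc $\Delta$), the target $K(\phi'_2)=\psi\cup\phi'_2$, and $\kappa=\phi_1\cup\phi'_2$. The subdisc $\Delta_0\subset\Delta\cap W$, with $\boundary\Delta_0=\phi_1\cup\epsilon_a$, records a parallelism between $\phi_1$ and $\epsilon_a$, and since $\phi'_2\cap\Delta_0=\boundary\psi$ this parallelism avoids $\phi'_2$. The idea is that $\Delta$ lets us split $\kappa$ off from $K(\phi_1)$ and then recover $K(\phi'_2)$ by reconnecting the two along a band running parallel to $\phi_1$.

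First I would put $\phi'_2$ into good position against all of $\Delta$, not merely $\Delta_0$. Because $\boundary_a Q=\nil$, the surface $\Delta\cap W$ is the union of $\Delta_0$ with discs bounded by the curves of $\boundary_{a^*}Q$; each such curve is inessential in $S$ and, together with a subdisc of $S$, cobounds a ball in $W$. Working from an innermost such ball outward, and using the hypothesis that $\phi'_2$ has no local knot to guarantee that the arcs of $\phi'_2$ trapped in these balls are unknotted, I would isotope $\phi'_2$ in $W$ rel $\boundary\psi$ to remove its intersections with these discs while keeping $\phi'_2\cap\Delta_0=\boundary\psi$. After this, $\phi'_2\cap\Delta=\boundary\psi$.

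Now I would build the band sum. Push $\phi_1$ slightly off $\Delta$ into $W$ to a parallel copy $\phi_1'$, so that $\kappa'=\phi_1'\cup\phi'_2$ is isotopic to $\kappa$ and disjoint from $\Delta$. After pushing $K(\phi_1)$ into the interior of a regular neighborhood $\nbhd(\Delta)$, the knot $K(\phi_1)$ is contained in a ball disjoint from $\kappa'$, so $K(\phi_1)\sqcup\kappa'$ is a $2$-component split link. Let the band $B$ be the product region lying between $\phi_1$ and $\phi_1'$. Band summing along $B$ deletes $\phi_1$ from $K(\phi_1)$ and $\phi_1'$ from $\kappa'$ and reconnects $\psi$ to $\phi'_2$ at the points $\boundary\psi$; the result is exactly $K(\phi'_2)$. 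Hence $K(\phi'_2)$ is a band sum of $\kappa$ and $K(\phi_1)$.

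It remains to show the band sum is nontrivial, and this is the step I expect to be the main obstacle. Arguing by contradiction, a trivial band sum would furnish a $2$-sphere $P$ meeting $B$ in a single spanning arc; this arc is a parallel copy of $\phi_1$, and $P$ separates the $\psi$-side from the $\phi'_2$-side while meeting $K(\phi'_2)$ only at $\boundary\psi$. Cutting $P$ along this arc and exploiting both the product structure of $B$ along $\phi_1$ and the absence of local knots in $\phi'_2$, I would extract a disc in $W$ realizing an isotopy of $\phi'_2$ onto $\phi_1$ rel $\boundary\psi$, making $(W,\phi'_2)$ equivalent to $(W,\phi_1)$, contrary to hypothesis. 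The delicate point is precisely converting the formal triviality of the band sum into this honest isotopy in $W$, since that is where the inequivalence and the no-local-knot hypotheses (together with $\boundary_a Q=\nil$ and normality) must all be made to bite.
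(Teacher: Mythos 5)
Your first step is where the proof breaks down, and the failure cannot be repaired by the no-local-knot hypothesis. Suppose $\boundary_{a^*} Q \neq \nil$ (this genuinely occurs for normal discs; see the left side of Figure \ref{fig:normal}). Each component $D_i$ of $\Delta \cap W$ other than $\Delta_0$ is a disc whose boundary bounds a disc $E_i \subset S$ containing $\epsilon_a$, hence containing $\boundary \psi$, and $D_i \cup E_i$ bounds a 3-ball $B_i \subset W$ containing $\Delta_0$. Since $\boundary \phi'_2 = \boundary \psi$ lies in the interior of $E_i$, the interior of $\phi'_2$ near its endpoints lies in $\interior(B_i)$; so if $\phi'_2$ were disjoint from $D_i$, the entire arc $\phi'_2$ would lie in $B_i$. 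That would force the class of $(\phi'_2, \boundary \phi'_2)$ in $H_1(W, B_i) \cong H_1(W) \cong \Z$ --- the winding number of $\phi'_2$ around $W$ --- to vanish. This class is invariant under any isotopy of $\phi'_2$ rel $\boundary \psi$, whether or not the isotopy crosses $\phi_1$, and it is unaffected by the presence or absence of local knots. So whenever $\phi'_2$ winds algebraically a nonzero number of times around $W$ --- exactly the situation in which the lemma is actually used, since Proposition \ref{Seifert business} invokes it when $\omega(\phi_1,\phi'_2) \geq 2$ --- the isotopy you claim in your first step does not exist: pushing arcs \emph{out} of a ball through a boundary disc is unobstructed, but pushing them \emph{in} is not. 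The paper's proof is built around precisely this point: it never removes the intersections of $\phi'_2$ with the inner discs. Instead it fixes a product structure $D \times I$ on the outermost ball $B'$, arranges each inner disc as a vertical annulus plus a horizontal disc, isotopes $\phi'_2$ so that $\phi'_2 \cap B' = \boundary \psi \times I$ (still crossing every inner disc twice), and takes the band $\Phi = \epsilon_a \times [1/2,1]$ to run vertically through the inner discs, with the splitting sphere the frontier of a neighborhood of $\Delta$. (A secondary issue: isotopies of $\phi'_2$ across $\phi_1$, which your step 1 allows, can change the knot type of $\kappa = \phi_1 \cup \phi'_2$, so even if the step succeeded you would have produced a band sum with a possibly different $\kappa$ than the one in the statement.)

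Your final paragraph is, as you acknowledge, a plan rather than a proof, and it anticipates only half of the needed case analysis. In the paper, a sphere $P$ exhibiting triviality of the band sum is isotoped to minimize $|P \cap S|$; the components of $P \cap W$ are then discs whose boundaries are parallel to $a$ or to $a^*$, and at most one of the latter kind occurs since each such disc meets $\Phi$ in an arc. If no component has boundary parallel to $a$, then $P \cap V$ is a single compressing disc for $\circ{S}$ on the $V$-side, contradicting Lemma \ref{Jaco lem} or the minimality of $|P \cap S|$; this is where the nontriviality of $(V,\psi)$ enters. If some component has boundary parallel to $a$, it is an essential disc in $W$ disjoint from $\phi_1$ and $\phi'_2$, so $\omega(\phi_1, \phi'_2) = 0$, and Lemma \ref{lem: wrapping well def} then contradicts the hypotheses that $(W,\phi'_2)$ is inequivalent to $(W,\phi_1)$ and has no local knot. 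Your sketch aims only at the second contradiction (extracting an equivalence of the two 1-tangles); the dichotomy itself, and the $V$-side case, are the substance of the step you deferred.
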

\begin{proof}
Each component $D$ of $\Delta \cap W$ other than $\Delta_0$ is a disc whose boundary bounds a disc $E \subset S$ containing the arc $\epsilon_a$.  The sphere $D \cup E$ bounds a 3-ball in $W$ containing $\phi$.  Let $\Delta'$ be the frontier of a regular neighborhood of $\Delta \cap W$. Observe that $\Delta'$ is the disjoint union of $2|\boundary_{a^*} Q| + 1$ discs. Let $D$ be the outermost disc and $E \subset S$ the disc containing $\epsilon_a$ with $\boundary D = \boundary E$. There is a product structure $D \times I$ on the 3-ball $B' \subset W$ with boundary $D \cup E$ such that $D = D \times \{1\}$ and $E = D \times \{0\}$. We may arrange that the disc $\Delta_0$ is vertical in this product structure; specifically $\Delta_0 = \epsilon_a \times [0, 1/2]$. We may also arrange that each disc of $(\Delta \cap W) \setminus (D \cup \Delta_0)$ consists of a vertical annulus and a horizontal disc. That is, the annulus is of the form $a' \times [0, x]$ for some $x \in (1/2, 1)$ and simple closed curve $a' \subset E$ and the disc is of the form $E' \times \{x\}$ for some subdisc $E' \subset E$.  We may then isotope $\phi'_2$ so that $\phi'_2 \cap B' = \boundary \psi \times I$. Push the subarcs $ \boundary \psi \times [0,1/2]$ slightly off $\Delta_0$, as in Figure \ref{band sum fig}. 

\begin{figure}[ht!]
\labellist
\small\hair 2pt
\pinlabel {$B'$} at 310 346
\pinlabel {$\phi'_2$} [b] at 292 439
\pinlabel {$\Phi$} at 411 347
\pinlabel {$\Delta_0$} at 410 185
\pinlabel {$S$} at 73 97
\pinlabel {$W$} at 161 291
\pinlabel {$D$} [r] at 259 363
\pinlabel {$E$} [b] at 244 168
\pinlabel {$\psi$} [t] at 310 47
\pinlabel {$V$} at 70 47
\endlabellist
\centering
\includegraphics[scale=0.5]{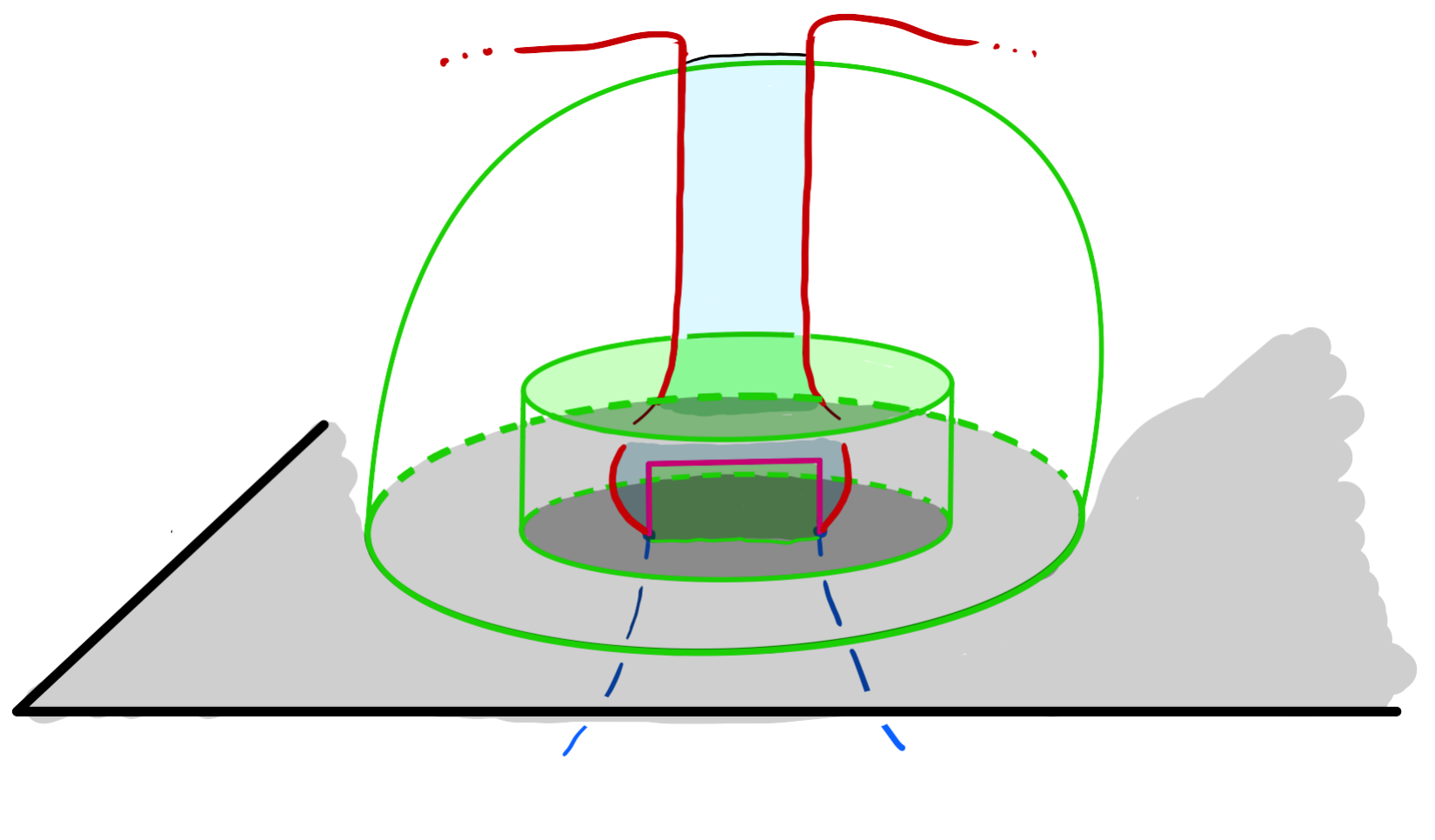}
\caption{ The knot $K(\phi'_2)$ can be by banding the  knot $K(\phi_1)$ to a knot isotopic in $W$ to $\phi_1 \cup \phi'_2$. The band $\Phi$ is shown in blue. The green discs are $\Delta \cap W$. See the proof of Lemma \ref{bubbles}.}
\label{band sum fig}
\end{figure}

Let $\Phi$ be the band that is the union of $\epsilon_a \times [1/2,1]$ with the narrow discs created by isotoping $\phi'_2$ off $\Delta_0$. $\Phi$ is a band joining the knot $\kappa = (\phi'_2 \setminus B') \cup (\epsilon_a \times \{1\})$ to $K(\phi_1)$, as in Figure \ref{band sum fig}. Observe that in $W$, $\kappa$ is isotopic to $\phi_1 \cup \phi'_2$. The knot $K(\phi'_2)$ is the result of attaching the band $\Phi$ to $\kappa \cup K(\phi_1)$. 

We claim that since $K(\phi_1)$ is the unknot, then this is a nontrivial band sum. Let $P$ be the sphere that is the frontier of a small regular neighborhood of $\Delta$. Note that $P \cap W$ consists of $2|\boundary_{a^*} R| + 1$ discs. For convenience, we may slightly expand $B'$ and adjust the product structure so that each of these discs are interior to $B'$ and are the union of a vertical annulus and a horizontal disc. The sphere $P$ separates  $\kappa$ from $K(\phi_1)$, so $K(\phi'_2)$ is the band sum of $\kappa \cup K(\phi_1)$ using $\Phi$. 

Suppose, for a contradiction, that the band sum is trivial. Then there is a sphere $P$ separating $\kappa$ from $K(\phi_1)$ and intersecting $\Phi$ once. We may choose $P$ to minimize $|P \cap S|$. Note that $P \cap W$ consists of discs, each of which has boundary parallel to $a$ or $a^*$. Each component of $P \cap W$ of the latter kind intersects the band $\Phi$ in a single arc, so there is at most one component of the latter kind. If no component of $P \cap S$ is parallel to $a$, then $P \cap V$ is a single disc, contradicting Lemma \ref{Jaco lem} or the minimality of $|P \cap S|$. 

On the other hand, if some component $D \subset P \cap W$ has boundary parallel to $S$, then $\omega(\phi_1, \phi'_2) = 0$ and the result follows from  Lemma \ref{lem: wrapping well def}. 
\end{proof}

\begin{lemma}\label{disjoint arc}
Let $(V,\psi)$ be a nontrivial 1-tangle and $(W,\phi_2)$ a trivial complementary 1-tangle. Suppose that $F'$ is an incompressible Seifert surface for $K(\phi_2)$ that is standard with respect to $\phi_2$. Let $\Sigma' = F \cap N$.   Let $b$ be a preferred longitude bounding a disc in $W$ disjoint from $\phi_2$. If $\Sigma'[b]$ is properly isotopic in $N[b]$ to a surface disjoint from the cocore $\beta$, then either $F'$ is itself disjoint from $\beta$ or $\Sigma'$ is $d$-$\boundary$-compressible in $N$ for some curve $d \subset \boundary N[b]$ intersecting $\boundary \Sigma'$ minimally up to isotopy. 
\end{lemma}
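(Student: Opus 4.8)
The plan is to reinterpret the hypothesis as a statement about isotoping the \emph{arc} $\beta$, and then to extract a boundary-compression from a bigon between $\beta$ and the Seifert surface. First I would dispose of the easy alternative. Since $F'$ is standard with respect to $\phi_2$ we have $\boundary_{b^*}\Sigma' = \nil$, so $\Sigma'[b]$ meets $\beta$ transversely in exactly $|\boundary_b\Sigma'|$ points, one in each capping disc. If $\boundary_b\Sigma' = \nil$ then $\Sigma'[b]\cap\beta = \nil$ and $F'$ is already disjoint from $\beta$, which is the first conclusion. So I assume $\boundary_b\Sigma'\neq\nil$ and aim to produce a $d$-$\boundary$-compressing disc for $\Sigma'$ in $N$.

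Recall that $N[b]$ is the exterior of $K(\phi_2)$, hence irreducible, and that $\Sigma'[b]$ is an incompressible Seifert surface in it. A proper isotopy of $\Sigma'[b]$ rel $\boundary\Sigma'[b] = K(\phi_2)$ extends to an ambient isotopy of $N[b]$ fixing $\boundary N[b]$; applying its inverse to $\beta$ shows that $\beta$ is isotopic, rel $\boundary\beta$, to an arc disjoint from $\Sigma'[b]$. Thus the geometric intersection of $\beta$ with the incompressible surface $\Sigma'[b]$, minimized over this isotopy class, is zero, while the present intersection is positive. Because $\boundary\beta\subset\boundary N[b]$ is disjoint from $K(\phi_2)=\boundary\Sigma'[b]$, no intersection point can be cancelled by sliding an endpoint of $\beta$ off $\boundary N[b]$, so every reduction must pair off two interior intersection points. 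A standard innermost-disc, outermost-arc argument (using incompressibility of $\Sigma'[b]$) then produces an embedded bigon: a disc $D\subset N[b]$ with $\boundary D = \gamma\cup\alpha$, where $\gamma$ is a subarc of $\beta$ meeting $\Sigma'[b]$ only in $\boundary\gamma$, $\alpha\subset\Sigma'[b]$, and $\interior(D)$ is disjoint from $\beta\cup\Sigma'[b]$. The two points of $\boundary\gamma$ are capping-disc intersections, so they lie on two distinct components of $\boundary_b\Sigma'$.

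Next I would drill $\beta$ out to return to $N$. Let $A = \boundary\nbhd(\beta)\cap\boundary N$ be the annulus created by the drilling; its core is isotopic to $b$, and the components of $\boundary_b\Sigma'$ appear on $A$ as core-parallel circles. The disc $D' = D\cap N$ is then a disc with $\boundary D' = \alpha'\cup\eta$, where $\alpha'\subset\Sigma'$ and $\eta$ is an arc properly embedded in $A$ joining the two components of $\boundary_b\Sigma'$ carrying $\boundary\gamma$, with $\interior(\eta)$ disjoint from $\boundary\Sigma'$. Hence $D'$ is a $\boundary$-compressing disc for $\Sigma'$ whose boundary arc $\eta$ lies in $\circ{S}$ and joins two distinct components of $\boundary_b\Sigma'$.

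To match the statement, the arc $\eta$ must be realized as a component of $d\setminus\boundary\Sigma'$ for a simple closed curve $d\subset\circ{S}$ that meets $\boundary\Sigma'$ \emph{minimally} up to isotopy, and this is the main obstacle. I would extend $\eta$ to some simple closed curve $d_0\subset\circ{S}$ transverse to $\boundary\Sigma'$ and having $\eta$ as one of its arcs. If $d_0$ does not meet $\boundary\Sigma'$ minimally, there is a bigon between $d_0$ and $\boundary\Sigma'$ in $\circ{S}$; I would reduce $d_0$ toward minimal position across such bigons, tracking the effect on $D'$ and checking that a $\boundary$-compression of $\Sigma'$ along the resulting minimal curve $d$ survives. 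Controlling this reduction is exactly where the standardness of $F'$ is used: the standard-collar condition (together with $\boundary_{b^*}\Sigma'=\nil$) pins down the components of $\boundary\Sigma'$ lying near $\epsilon_b$ and the tube $\boundary\nbhd(\psi)$, so that $\eta$ can be completed and the bigon reductions carried out without reintroducing intersections or destroying the disc. Verifying that a minimal $d$ carrying a $d$-$\boundary$-compressing disc genuinely results is the delicate point of the argument.
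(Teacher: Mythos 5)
Your reduction of the hypothesis to an isotopy of the arc $\beta$ rather than the surface is the same first move the paper makes, and your disposal of the case $\boundary_b\Sigma' = \nil$ is fine. But the step on which everything else rests is false: you claim that, because $\boundary\beta$ is disjoint from $\boundary\Sigma'[b]$, ``no intersection point can be cancelled by sliding an endpoint of $\beta$,'' so that intersections must cancel in pairs along interior bigons. Both $\boundary\beta$ and the longitude $\boundary\Sigma'[b]$ lie on the torus $\boundary N[b]$, and during a proper isotopy the endpoints of $\beta$ sweep out paths on this torus which generically cross $\boundary\Sigma'[b]$ transversally; each such crossing cancels a \emph{single} interior intersection point of $\beta$ with $\Sigma'[b]$. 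Disjointness at each fixed time does not forbid these crossings. This is not a repairable technicality: the lemma's second conclusion --- a $d$-$\boundary$-compressing disc whose arc lies on $\boundary N[b]$ --- is precisely the embedded trace of such an endpoint crossing, so an argument that outlaws endpoint crossings can never produce it. Worse, in the application the lemma was built for (Case 2 of the Krebes theorem), $|F' \cap \beta| = 1$: a single intersection point cannot be cancelled in pairs, so under your dichotomy the lemma's hypothesis would simply be contradictory, whereas in fact the isotopy there slides the unique intersection point off an end of $\beta$.

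A second, related problem is that your bigon, even when it exists, yields a compression in the wrong place. Drilling out $\beta$ turns it into a disc whose boundary arc $\eta$ lies on the attaching annulus $\nbhd(b) \subset \circ{S}$, joining two components of $\boundary_b\Sigma'$; that annulus is exactly the part of $\boundary N$ discarded in forming $\boundary N[b]$, so this is not a $d$-$\boundary$-compression with $d \subset \boundary N[b]$ as the statement requires. (Indeed, the paper's proof notes that this configuration --- its case (2$'$) --- cannot occur at all, because $\Sigma'$ is incompressible.) The paper avoids both difficulties by treating the whole isotopy as a map $H \co I \times I \to N[b]$, minimizing the $1$-manifold $G = H^{-1}(\Sigma'[b])$, and classifying its arcs: the arcs with an endpoint on $(\boundary I) \times I$ --- the traces of the moving endpoints of $\beta$ --- are the ones that, after a doubling argument plus Dehn's lemma and the loop theorem, give the embedded disc with boundary arc on $\boundary N[b]$. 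A smaller inaccuracy: the hypothesis is a \emph{proper} isotopy of $\Sigma'[b]$, not an isotopy rel $\boundary\Sigma'[b]$, so you may not assume the ambient extension fixes $\boundary N[b]$ pointwise, nor that $\beta$ moves rel $\boundary\beta$.
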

\begin{proof}
Since $F'$ is standard with respect to $\phi_2$, $\boundary_{b^*} \Sigma' = \nil$. Hence, the surface $\Sigma'[b]$ is properly embedded in $N[b]$ and has a single boundary component. Indeed, $\Sigma'[b]$ is obtained by removing a collar neighborhood of $\boundary F'$ from $F'$. Since $\Sigma'[b]$ is isotopic in $N[b]$ to a surface disjoint from $\beta$, the arc $\beta$ is properly isotopic in $N[b]$ to an arc $\beta'$ disjoint from $\Sigma'[b]$.

Let 
\[
H\co I \times I \to N[b]
\]
be such an isotopy. In particular, $H(t, 0)$ is a parameterization of $\beta$ and $H(t, 1)$ is a parameterization of $\beta'$ and for all $t \in [0,1]$, $H(0, t)$ and $H(1,t)$ lie in the torus $\boundary N[b]$. We may assume that $H$ is transverse to $\Sigma'[b]$, so that $H^{-1}(\Sigma'[b])$ is a 1-manifold $G$: the union of embedded arcs and circles in $I \times I$. Assume that we have chosen $H$ and $\beta'$ so as to minimize $|G|$.

Each circle of $G$ bounds a disc in $I \times I$. Since $\Sigma'[b]$ is incompressible, we may surger $H$ (without affecting $\beta$ or $\beta'$) to eliminate all circles from $G$, contradicting our choice of $H$. Consequently, each component of $G$ is an arc. Since $|\beta' \cap \Sigma'[b]| = 0$, each arc of $G$ is one of the following:
\begin{enumerate}
\item a \defn{spanning arc}: an arc having one endpoint in $\{0\} \times I$ and the other in $\{1\} \times I$;
\item an arc of $G$ having both endpoints in $(0,1) \times \{0\}$;
\item an arc of $G$ having an endpoint in $(0,1) \times \{0\}$ and its other endpoint in $(\boundary I) \times I$;
\item an arc having both endpoints in $\{0\} \times I$ or both endpoints in $\{1\} \times I$.
\end{enumerate}

If there is a spanning arc, we may replace $\beta'$ with $H(I \times \{t_0\})$ for an appropriate value of $t_0$ and again contradict the choice of $H$ and $\beta'$. Hence, $G$ contains no spanning arcs. Let $\zeta \subset G$ be an outermost arc, cobounding a subdisc $\Delta' \subset I \times I$ with (respectively):
\begin{enumerate}
\item[(2)] a subarc of $(0,1) \times \{0\}$, 
\item[(3)] a subarc of $(\boundary I) \times I$ and a subarc of $(0,1) \times \{0\}$
\item[(4)] a subarc of $(\boundary I) \times I$.
\end{enumerate}
Standard 3-manifold theory (eg. a doubling argument combined with an application of Dehn's lemma and the loop theorem) then implies that $\Sigma$ is $\boundary$-compressible via a $\boundary$-compressing disc $D$ which is the intersection with $N$ of a disc $D$ such that $\boundary D$ is the union of an arc in $\Sigma$ with either (2') a subarc of $\beta$, (3') a subarc of $\beta$ and an arc in $\boundary N[c]$, or (4') just an arc in $\boundary N[c]$, corresponding to (2), (3), and (4) respectively. Since $\Sigma'$ is incompressible, (2') does not occur. If (3') occurs, then the frontier of a regular neighborhood of $D$ is a disc satisfying (4'). Conclusion (4') is what the lemma claims.
\end{proof}

\section{Proof of Theorem \ref{Main Thm}}\label{final proof}

We restate the theorem for convenience:

\begin{maintheorem}
If $(V, \psi)$ is nontrivial, then for any arc $\phi \subset W$ with $K(\phi)$ the unknot, the arc $\phi \subset W$ is trivial and, up to isotopy in $W$ relative to $\boundary \psi$, there are at most two such arcs $\phi$. Furthermore, if $\phi_1, \phi_2$ are two such arcs then $\omega(\phi_1, \phi_2) = 1$. 
\end{maintheorem}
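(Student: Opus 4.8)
The plan is to reduce the general theorem to the two special cases already handled in the excerpt, so the only genuinely new work concerns the case of large wrapping index. First I would invoke Lemma \ref{trivial} to dispose of the triviality claim: once $(V,\psi)$ is nontrivial and $K(\phi)$ is the unknot, the complementary tangle $(W,\phi)$ is automatically trivial. The substance of the theorem is the bound ``at most two'' together with the assertion $\omega(\phi_1,\phi_2)=1$. So I would fix a nontrivial $(V,\psi)$, assume for contradiction that there are three pairwise inequivalent complementary tangles $(W,\phi_1)$, $(W,\phi_2)$, $(W,\phi_3)$ all giving the unknot, and aim to derive a contradiction; simultaneously I must show that any two unknot-producing arcs satisfy $\omega=1$.

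The key organizing principle is to stratify by wrapping index. If two of the arcs, say $\phi_1$ and $\phi_2$, have $\omega(\phi_1,\phi_2)=1$, then Corollary \ref{small wrapping} already forbids a \emph{third} arc at wrapping distance $1$ from $\phi_1$, and the $\omega=1$ conclusion holds by construction. Hence the crux is to rule out $\omega(\phi_1,\phi_2)\geq 2$ entirely: I would prove that \emph{any} two unknot closures of a nontrivial $(V,\psi)$ must satisfy $\omega(\phi_1,\phi_2)=1$ (never $0$, never $\geq 2$). The value $0$ is excluded because by Lemma \ref{lem: wrapping well def} it would force $\phi_1,\phi_2$ to be equivalent (both being trivial and local-knot-free, since the unknot has no local knots). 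So the entire theorem collapses to: \textbf{$\omega(\phi_1,\phi_2)\geq 2$ is impossible.}

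To attack $\omega\geq 2$ I would set up the sutured-manifold machinery flagged in Section \ref{band sums}. Choosing preferred longitudes $a$ (disjoint from $\phi_1$) and $b$ (disjoint from $\phi_2$), I would take an incompressible disc $\Delta$ bounded by the unknot $K(\phi_1)$ and an incompressible Seifert surface $F'$ for $K(\phi_2)$, put them into normal and standard position via Lemmas \ref{normal position} and \ref{standard position}, and examine the induced surfaces $Q=\Delta\cap N$ and $\Sigma'=F'\cap N$ in $N=V\setminus\nbhd(\psi)$, whose boundary is a genus $2$ surface. The plan is to feed these into Theorem \ref{Taylor thm} (the author's 2-handle-addition result from sutured manifold theory): capping off along $b$ should realize $N[b]$ as the exterior of $K(\phi_2)$, and a norm/taut-decomposition argument should show that if $\omega(\phi_1,\phi_2)\geq 2$ then the Thurston norm forces $\Sigma'[b]$ to be disjoint (after isotopy) from the cocore $\beta$. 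Lemma \ref{disjoint arc} then converts this disjointness into either $F'$ itself avoiding $\beta$ or a $d$-$\boundary$-compression contradicting normality; and the band-sum analysis of Lemma \ref{bubbles}, together with Gabai's and Scharlemann's theorems on band sums (a nontrivial band sum of a split link cannot be the unknot), delivers the contradiction with $K(\phi_2)$ being unknotted. The $\boundary_a Q = \nil$ hypothesis of Lemma \ref{bubbles} is exactly what a high wrapping index should guarantee, closing the loop.

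The hard part will be the sutured-manifold step: correctly tracking the several boundary slopes $a, a^*, b, b^*$ on $\boundary N$, arranging a conditioned decomposing surface and a taut hierarchy so that Theorem \ref{Taylor thm} applies, and extracting from norm-minimality the precise conclusion that $\omega\geq 2$ makes $\Sigma'[b]$ isotopic off $\beta$. The bookkeeping between the $V$-side picture (arcs, bubbles, band sums) and the $N$-side sutured picture (norms, $\boundary$-compressions) is where errors are most likely, and where the careful normal/standard-position lemmas of Section \ref{band sums} earn their keep; everything downstream of that disjointness conclusion is then a matter of assembling Lemmas \ref{disjoint arc} and \ref{bubbles} with the classical band-sum results.
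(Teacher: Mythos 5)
Your high-level skeleton is the paper's: Lemma \ref{trivial} disposes of triviality, Lemma \ref{lem: wrapping well def} excludes $\omega = 0$, Corollary \ref{small wrapping} gives uniqueness at wrapping index one, and the crux is indeed showing $\omega(\phi_1,\phi_2) \geq 2$ is impossible --- this is exactly Proposition \ref{Seifert business}. But your execution of that crux has a genuine gap. The claim that ``the $\boundary_a Q = \nil$ hypothesis of Lemma \ref{bubbles} is exactly what a high wrapping index should guarantee'' is false, and it inverts the actual logic. Whether $\boundary_a Q = \nil$ is a case split on the geometry of the normalized disc $\Delta$ bounded by $K(\phi_1)$ (how many curves of $\Delta \cap S$ are parallel to $a$); the wrapping index does not control this, and both cases occur. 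They receive disjoint, parallel arguments: when $\boundary_a Q = \nil$, Lemma \ref{bubbles} plus Scharlemann's band-sum theorem shows $K(\phi_2)$ is not the unknot; when $\boundary_a Q \neq \nil$, one uses $\omega \geq 2$ (via Lemma \ref{wrapping 1 lem}) to get $a \cap b \neq \nil$, builds a second suture $b'$ from an outermost returning arc of $a$ in the annulus $S \setminus \nbhd(b)$, and applies Theorem \ref{Taylor thm} to $Q$. Your proposal instead chains the sutured-manifold step, Lemma \ref{disjoint arc}, and the band-sum step into one sequence, which does not parse: Theorem \ref{Taylor thm} requires $\boundary Q$ to meet the sutures nontrivially (precisely what the second case supplies), and the band-sum argument is an alternative to it, not a downstream consequence. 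You also never engage with conclusions (SM1) and (SM3) of Theorem \ref{Taylor thm}; ruling out (SM3) is where the careful construction of $b'$ and the intersection-count inequalities (I1)--(I3) enter, and that counting argument is a substantive piece of the proof for which your outline has no replacement.

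Separately, Lemma \ref{disjoint arc} is not needed for this theorem, and your proposed use of it is incorrect. Once (SM2) produces a minimal genus Seifert surface for $K(\phi_2)$, standard with respect to $\phi_2$, whose capped-off piece $\Sigma[b]$ is isotopic in $N[b]$ off the cocore $\beta$, the unknot assumption makes that surface a disc; a disc in $N[b]$ disjoint from $\beta$ is a properly embedded essential disc in $N$, contradicting the $\boundary$-irreducibility of $N$ (Lemma \ref{Jaco lem}). That is the entire contradiction. Your alternative --- reading the $d$-$\boundary$-compression alternative of Lemma \ref{disjoint arc} as ``contradicting normality'' --- conflates two different notions: normality concerns $d$-$\boundary$-compressing discs for curves $d \subset \circ{S}$ in the genus-2 boundary of $N$, whereas Lemma \ref{disjoint arc} produces compressions along curves in the torus $\boundary N[b]$. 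In the paper that lemma is reserved for the Krebes tangle (Theorem \ref{Krebes thm}), where $K(\phi_2)$ is the figure-eight knot rather than the unknot, and there the resulting $d$-$\boundary$-compression yields a contradiction only after a further fundamental-group argument about meridians, not by any appeal to normality.
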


For the proof, we make use of the following theorem of the author, which as promised, relies on sutured manifold theory terminology. We give a simplified (though logically complete) statement, adapted to our context. It is obtained from the statement as given in \cite{Taylor-2h}*{Theorem 2.1} by strengthening the hypotheses and weakening the conclusion.

\begin{theorem}[{cf. \cite{Taylor-bandtaut}*{Theorem 10.7}}]\label{Taylor thm}
    Suppose that $(N,\gamma)$ is a taut sutured manifold with $\boundary N$ a genus 2 surface and with three nonseparating curves $\gamma \subset \boundary N$, dividing $\boundary N$ into two pairs-of-pants. Let $b \subset \gamma$ be one of the curves. Suppose that $Q \subset N$ is an incompressible and $b$-$\boundary$-incompressible connected surface with $\boundary Q$ intersecting $\gamma$ minimally and nontrivially. Then one of the following is true:
    \begin{enumerate}
        \item[(SM1)] $N[b]$ has a summand that is a nontrivial lens space;
        \item[(SM2)] The arc $\beta$, which is the cocore of the 2-handle in $N[b]$ attached along $b$, can be properly isotoped to be disjoint from the first surface $\Sigma[b]$ in a taut sutured manifold hierarchy for $(N[b], \gamma \setminus b)$. Furthermore, $(\Sigma[b],\boundary \Sigma[b])$ can be taken to represent $\pm y$ for any nontrivial $y \in H_2(N[b], \boundary N[b])$;
        \item[(SM3)] $-2\chi(Q) + |\boundary Q \cap \gamma| \geq 2|\boundary Q \cap b|$
    \end{enumerate}
\end{theorem}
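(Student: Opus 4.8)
The plan is to run the sutured-manifold $2$-handle-addition machinery of Gabai \cites{Gabai1, Gabai2, Gabai3} and Scharlemann \cite{Scharlemann}, using $Q$ as a parameterizing surface. The three conclusions should correspond to the three ways attaching the $2$-handle along $b$ can interact with a taut hierarchy for the capped manifold: a genuine degeneration (SM1), a ``transverse'' interaction in which the cocore is carried off the top decomposing surface (SM2), and a controlled complexity increase witnessed by $Q$ (SM3).

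First I would form the sutured manifold $(N[b], \gamma \setminus b)$. Attaching the $2$-handle along the nonseparating curve $b$ drops the boundary genus to one, each pair-of-pants $R_\pm$ is capped into an annulus, and the two remaining curves of $\gamma$ become the sutures; thus $(N[b], \gamma\setminus b)$ is a candidate taut sutured manifold with toroidal boundary. If it fails to be taut, I expect the obstruction to be a reducing sphere arising from the $2$-handle: combining irreducibility of $N$ with Gabai's analysis of how capping off a norm-minimizing surface can fail to remain norm-minimizing, this should force a nontrivial lens-space summand of $N[b]$, which is conclusion (SM1). So I would then assume $(N[b], \gamma\setminus b)$ is taut.

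Given a nontrivial class $y \in H_2(N[b], \boundary N[b])$, I would choose a conditioned, norm-minimizing surface $\Sigma[b]$ representing $\pm y$ to begin a taut hierarchy for $(N[b], \gamma\setminus b)$, whose existence is guaranteed by Gabai's theorem that taut sutured manifolds admit taut hierarchies. Isotoping the cocore $\beta$ to meet $\Sigma[b]$ minimally, if the minimum is zero then $\beta$ is disjoint from $\Sigma[b]$, and since $y$ was arbitrary this is exactly conclusion (SM2). The remaining case is $|\beta \cap \Sigma[b]| > 0$, i.e. $\Sigma = \Sigma[b] \cap N$ has boundary running over the $2$-handle region.

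The heart of the argument, and the step I expect to be the main obstacle, is extracting the norm inequality (SM3) in this last case. Here I would track the index $I(Q) = -2\chi(Q) + |\boundary Q \cap \gamma|$ of the parameterizing surface $Q$ through the hierarchy, following Scharlemann's combinatorial estimates. The hypotheses that $Q$ is incompressible and $b$-$\boundary$-incompressible are precisely what prevent $Q$ from being simplified against the decomposing surfaces, so that its index is not spent frivolously, while each of the $|\beta \cap \Sigma[b]|$ intersections of the cocore with the top surface must be ``paid for'' by crossings of $\boundary Q$ with $b$. Bookkeeping the index down the hierarchy—which terminates in balls, where the index is nonnegative—and lifting the result back to the top should yield $-2\chi(Q) + |\boundary Q \cap \gamma| \geq 2|\boundary Q \cap b|$. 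The delicate points are verifying that the parameterizing-surface index behaves monotonically under each sutured decomposition and keeping the separating/nonseparating bookkeeping of $\boundary Q$ against the three sutures consistent across the hierarchy; these are exactly the technical estimates developed in \cite{Taylor-2h} and \cite{Taylor-bandtaut}, of which the present statement is a streamlined specialization.
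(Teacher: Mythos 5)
There is an important mismatch of expectations here: the paper does not prove this statement at all. It is imported verbatim (up to strengthening hypotheses and weakening conclusions) from the author's earlier published work --- Theorem 10.7 of \cite{Taylor-bandtaut}, equivalently a specialization of Theorem 2.1 of \cite{Taylor-2h} --- and the paper explicitly says so. Consequently your proposal cannot be matched against an in-paper argument; it can only be judged as a reconstruction of the cited proof. As such, it correctly names the machinery (taut hierarchies for $(N[b], \gamma\setminus b)$, the cocore $\beta$, Scharlemann-style parameterizing surfaces with index $-2\chi(Q)+|\boundary Q \cap \gamma|$), but it is not an independent proof: the entire derivation of (SM3), which is the heart of the theorem and the reason the band-taut sutured manifold theory of \cite{Taylor-bandtaut} was developed in the first place, is deferred with the phrase ``these are exactly the technical estimates developed in \cite{Taylor-2h} and \cite{Taylor-bandtaut}.'' Once you allow yourself that citation, the honest move is to cite the theorem outright, which is exactly what the paper does.

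Beyond the deferral, there is a concrete flaw in how you wire the trichotomy. You reduce to ``if $(N[b],\gamma\setminus b)$ is not taut, then (SM1); if taut, then (SM2) or (SM3).'' The first implication is not a real theorem: tautness of a 2-handle addition can fail because $N[b]$ is reducible or because the capped-off boundary surfaces fail to be norm-minimizing, and neither failure forces a nontrivial lens space summand in general (a reducing sphere in $N[b]$ can bound arbitrary summands; extracting a lens space requires controlling how the sphere meets the cocore $\beta$, which is itself a delicate sutured-manifold argument). In the actual proof the lens space conclusion emerges from the analysis of how the band/cocore can degenerate during a band-taut hierarchy, not from a top-level ``non-taut $\Rightarrow$ lens space'' dichotomy; and the non-taut case must be allowed to land in (SM3) as well, which again requires the parameterizing-surface estimates you have postponed. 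So even read purely as an outline of the cited proof, the proposal's case structure does not match the argument it is standing in for.
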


We prove the following and then conclude the proof of Theorem \ref{Main Thm}. 
\begin{proposition}\label{Seifert business}
Suppose that $(V, \psi)$ is a nontrivial 1-tangle and that $(W, \phi_1)$ and $(W, \phi_2)$ are trivial complementary 1-tangles, with $\omega(\phi_1, \phi_2) \geq 2$. Let $b \subset \circ {S}$ be a preferred longitude bounding a disc in $W$ disjoint from $\phi_2$. Suppose that $K(\phi_1)$ is the unknot. Then $K(\phi_2)$ is not the unknot and one of the following holds:
\begin{enumerate}
\item There is a complementary 1-tangle $(W, \phi'_2)$ equivalent to $(W, \phi_2)$ such that $K(\phi_2)$ is the result of a nontrivial band sum of $K(\phi_1)$ and $\kappa = \phi_1 \cup \phi'_2$. 
\item $x(N) \leq x(N[b])$. In particular there exists a minimal genus Seifert surface $F$ for $K(\phi_2)$ which is standard with respect to $\phi_2$ such that $\Sigma[b] = F \cap N[b]$ is isotopic in $N[b]$ to a surface disjoint from $\beta$.
\end{enumerate}
\end{proposition}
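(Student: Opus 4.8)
The plan is to run Theorem \ref{Taylor thm} with the \emph{unknot} $K(\phi_1)$ supplying the test surface, but with the $2$-handle attached along the $\phi_2$-longitude $b$; this asymmetry is what extracts information about $K(\phi_2)$. First observe that since $K(\phi_1)$ is the unknot, $\psi$ has no local knot (a local knot would survive as a summand of every closure), so Lemma \ref{Jaco lem} applies and $N$ is irreducible and $\boundary$-irreducible. This lets us regard $N$ as a taut sutured manifold $(N,\gamma)$, with $\gamma$ three nonseparating curves cutting the genus $2$ surface $\boundary N$ into two pairs-of-pants and with $b$ one of the three. I would take a disc Seifert surface $\Delta$ for $K(\phi_1)$, put it in normal position with respect to $\phi_1$ via Lemmas \ref{standard position} and \ref{normal position}, and set $Q=\Delta\cap N$. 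Normality provides precisely the hypotheses of Theorem \ref{Taylor thm}: $Q$ is incompressible, $\boundary Q$ meets $\gamma$ (hence $b$) minimally, and $Q$ is $b$-$\boundary$-incompressible.

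Applying Theorem \ref{Taylor thm} to $(N,\gamma)$, $b$, and $Q$ gives the trichotomy (SM1)--(SM3). Conclusion (SM1) is immediately excluded, because $N[b]$ is the exterior of the knot $K(\phi_2)\subset S^3$, hence irreducible, and so has no nontrivial lens space summand. Conclusion (SM2) yields case (2): choosing $y\in H_2(N[b],\boundary N[b])$ to be the Seifert class of $K(\phi_2)$, the first surface $\Sigma[b]$ of the taut hierarchy is a norm-minimizing, hence minimal genus, Seifert surface for $K(\phi_2)$, and the cocore $\beta$ is disjoint from it. Since $\Sigma[b]$ is disjoint from $\beta$ it already lies in $N=N[b]\setminus\nbhd(\beta)$ with no $b$-parallel boundary to cap, so it represents a nontrivial class in $H_2(N,\boundary N)$ of the same Thurston norm, giving $x(N)\leq x(N[b])$; Lemmas \ref{standard position} and \ref{disjoint arc} then put it into the standard form promised by (2).

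Conclusion (SM3) is where band sums enter and is the main obstacle. The inequality $-2\chi(Q)+|\boundary Q\cap\gamma|\geq 2|\boundary Q\cap b|$ must be confronted with two facts: $Q$ comes from a \emph{disc}, so $\chi(Q)$ is as large as the combinatorics allow, and $|\boundary Q\cap b|$ is forced to be positive by $\omega(\phi_1,\phi_2)\geq 2$, since Lemma \ref{wrapping 1 lem} shows that $a$ and $b$ cannot then be isotoped apart in $\circ{S}$. The heart of the argument is the bookkeeping expressing $\chi(Q)$ and the intersection numbers of $\boundary Q$ with $b$, with the meridian $\mu$, and with the $a$- and $a^*$-parallel components in terms of the combinatorics of the normal disc, and showing that (SM3) is compatible with $\omega\geq 2$ only when $\boundary_a Q=\nil$. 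Granting $\boundary_a Q=\nil$, Lemma \ref{bubbles} exhibits $K(\phi_2)$ as a nontrivial band sum of $K(\phi_1)$ and $\kappa=\phi_1\cup\phi'_2$, which is case (1). I expect this combinatorial step, together with verifying tautness of $(N,\gamma)$, to be the delicate part.

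Finally, that $K(\phi_2)$ is not the unknot follows from the dichotomy. In case (1), a nontrivial band sum of the unknot with another knot cannot itself be the unknot, by the band sum results of Section \ref{band sums} together with the theorems of Gabai and Scharlemann. In case (2), were $K(\phi_2)$ the unknot, then $N[b]$ would be a solid torus and $\Sigma[b]$ a meridian disc disjoint from $\beta$; pushing this disc into $W$ and invoking Lemma \ref{lem: wrapping well def} would realize $\omega(\phi_1,\phi_2)\leq 1$, contradicting the standing hypothesis $\omega(\phi_1,\phi_2)\geq 2$.
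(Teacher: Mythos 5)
Your overall strategy is the paper's: put a disc $\Delta$ bounded by $K(\phi_1)$ in normal position, set $Q = \Delta \cap N$, feed $Q$ into Theorem \ref{Taylor thm} with the $2$-handle attached along $b$, extract conclusion (2) from (SM2), and extract conclusion (1) from Lemma \ref{bubbles} plus Scharlemann's band-sum theorem. Your opening observation that $\psi$ has no local knot, and your justification that (SM1) fails because $N[b]$ is a knot exterior in $S^3$ (hence irreducible), are both correct and even tidier than the paper at that point. But there are genuine gaps. First, normality does \emph{not} ``provide precisely the hypotheses of Theorem \ref{Taylor thm}'': by definition, $\Delta$ is normal if it is aligned and \emph{either} $\Delta \cap S = \epsilon_a$ \emph{or} $Q$ admits no $d$-$\boundary$-compressing disc. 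In the first alternative $Q$ may perfectly well be $b$-$\boundary$-compressible, so your plan of applying the trichotomy unconditionally and then deducing $\boundary_a Q = \nil$ from (SM3) breaks down exactly in (part of) the case it is supposed to produce. This is why the paper splits into cases \emph{before} invoking sutured manifold theory: when $\boundary_a Q = \nil$ it goes directly to Lemma \ref{bubbles} and Scharlemann, never touching Theorem \ref{Taylor thm}, and it applies that theorem only when $\boundary_a Q \neq \nil$.

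Second---and this is the heart of the matter---you never construct the suture set $\gamma$. Theorem \ref{Taylor thm} needs three curves cutting $\boundary N$ into two pairs of pants, and the paper takes $\gamma = \mu \cup b \cup b'$, where the third curve $b'$ is built with great care: $\omega(\phi_1,\phi_2) \geq 2$ gives $a \cap b \neq \nil$ (Lemma \ref{wrapping 1 lem}), an annulus argument in $A = S \setminus \nbhd(b)$ shows that $\epsilon_a$ and $a^*$ must also cross $b$, and $b'$ is then the frontier of a neighborhood of $b_+ \cup D$ for an outermost returning disc $D$ of $a \cap A$. It is precisely this choice that yields the inequalities $|\boundary_a Q \cap b'| \leq |\boundary_a Q \cap b| - 2$, $|\boundary_{a^*} Q \cap b'| \leq |\boundary_{a^*} Q \cap b| - 2$, and $|\epsilon_a \cap b'| \leq |\epsilon_a \cap b| - 1$, which make (SM3) flatly contradictory when $\boundary_a Q \neq \nil$. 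For an arbitrary choice of $b'$, the inequality in (SM3) can hold even with $\boundary_a Q \neq \nil$, and then your argument yields nothing, since Lemma \ref{bubbles} requires $\boundary_a Q = \nil$. So the step you defer as ``delicate bookkeeping'' is not a verification but the construction on which the whole proof turns. Finally, your exclusion of the unknot in case (2) is incorrect: the disc $\Sigma[b]$, once isotoped off $\beta$, is properly embedded in $N \subset V \setminus \nbhd(\psi)$; it cannot be ``pushed into $W$'', and Lemma \ref{lem: wrapping well def} concerns essential discs in $W$ disjoint from both arcs, i.e.\ $\omega(\phi_1,\phi_2) = 0$, which is not what this disc gives. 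The correct argument is shorter: a disc in $N[b]$ disjoint from the cocore $\beta$ lies in $N$ and is essential there (its boundary meets a meridian of $K(\phi_2)$ once), contradicting the $\boundary$-irreducibility of $N$ supplied by Lemma \ref{Jaco lem}.
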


\begin{proof}[Proof of Proposition \ref{Seifert business}]
Assume that $(V,\psi)$ is a nontrivial 1-tangle and that $(W, \phi_1)$ and $(W, \phi_2)$ are trivial complementary 1-tangles such that $K(\phi_1)$ is the unknot and $\omega(\phi_1, \phi_2) \geq 2$. Let $a, b \subset \circ{S}$ be preferred longitudes bounding discs in $W$ disjoint from $\phi_1, \phi_2$ respectively.  Let $\beta \subset N[b]$ be the cocore of the 2-handle attached along $b$.

The 3-manifold $N = V \setminus \nbhd(\psi)$ has a single boundary component of genus 2. Let $\mu \subset \boundary N$ be a meridian of $\psi$. We will eventually define a curve $b' \subset \boundary N$ so that $(N,\gamma) = (N, \mu \cup b \cup b')$ is a sutured manifold to which we can profitably apply Theorem \ref{Taylor thm}.

Let $\Delta$ be a disc with $\boundary \Delta = K(\phi_1)$. By Lemma \ref{normal position}, it can be isotoped so that it is normal with respect to $\phi_1$. Let $Q = \Delta \cap N$. Recall that $\boundary Q \cap S = \epsilon_a \cup \boundary_a Q \cup \boundary_{a^*} Q$. Let $\boundary_0 Q = \boundary \Delta$. We may assume that $a$, $a^*$, $b$, $\epsilon_a$, $\boundary_a Q$, and $\boundary_{a^*} Q$ all intersect minimally up to isotopy in $\circ{S}$. 

By Lemma \ref{standard position}, there is a minimal genus Seifert surface for $K(\phi_2)$ which is standard with respect to $\phi_2$. Give it an orientation and let $y \in H_2(N[b], \boundary N[b])$ be the class represented by its intersection with $N[b]$. 

\textbf{Case 1:} $\boundary_a Q = \nil$.

Let $\Delta_0 \subset \Delta \cap W$ be the disc with boundary $\phi_1 \cup \epsilon_a$. Isotope $\phi_2$ in $W$ relative to $\boundary \phi_2$ to an arc $\phi'_2 \subset W$ with interior disjoint from $\Delta_0$. This may require isotoping it across $\phi_1$. Let $\kappa = \phi_1 \cup \phi'_2$. Since $K(\phi_1)$ is the unknot, according to Lemma \ref{bubbles}, $K(\phi_2)$ is the nontrivial band sum of $\kappa$ with $K(\phi_1)$. By a theorem of Scharlemann \cite{Scharlemann-bandsum}, $K(\phi_2)$ is not the unknot.

  \textbf{Case 2:} $\boundary_a Q \neq \nil$.

By Lemma \ref{wrapping 1 lem}, $a \cap b \neq \nil$. We claim also that neither $\epsilon_a$ nor $a^*$ are disjoint from $b$. To see this, as in Figure \ref{Fig:AnnulusArg}, consider the annulus $A = S \setminus \nbhd(b)$. Let $b_\pm$ be the components of $\boundary A$. The intersection $a \cap A$ consists of spanning arcs and returning arcs and there are the same number of returning arcs incident to each of $b_\pm$. Since $a$ and $b$ are isotopic in $S$ (but not in $\circ{S}$), their algebraic intersection number is zero, so there is at least one returning arc incident to each of $b_\pm$. Since $a$ and $b$ intersect minimally in $\circ{S}$, each returning arc cobounds a disc in $A$ with a subarc of $\boundary A$ containing at least one point of $\boundary \psi$. If such a disc were to contain both points, we could not have returning arcs incident to both of $b_\pm$. Thus, each such disc contains exactly one component of $\boundary \psi$. In particular, the returning arcs incident to each of $b_\pm$ are all parallel. Let $D_\pm$ be the innermost discs in $A$ cut off by returning arcs incident to each component of $\boundary A$. The arc $\epsilon_a$ is disjoint from $a$ and $\boundary \epsilon_a = \boundary \psi$. Thus, each of $D_\pm$ must contain an arc of $\epsilon_a \cap A$, so $\epsilon_a \cap b \neq \nil$. Since the curve $a^*$ is the frontier of a regular neighborhood of $\epsilon_a$, the curve $a^*$ must also intersect $b$.

\begin{figure}[ht!]
\labellist
\small\hair 2pt
\pinlabel {$b_+$} [br] at 553 1680
\pinlabel {$b_-$} [tl] at 714 977
\pinlabel {$\epsilon_b$} [b] at 913 1475
\pinlabel {$D_+$} at 273 1279
\pinlabel {$D_-$} [b] at 1300 980
\endlabellist
\centering
\includegraphics[scale=0.1]{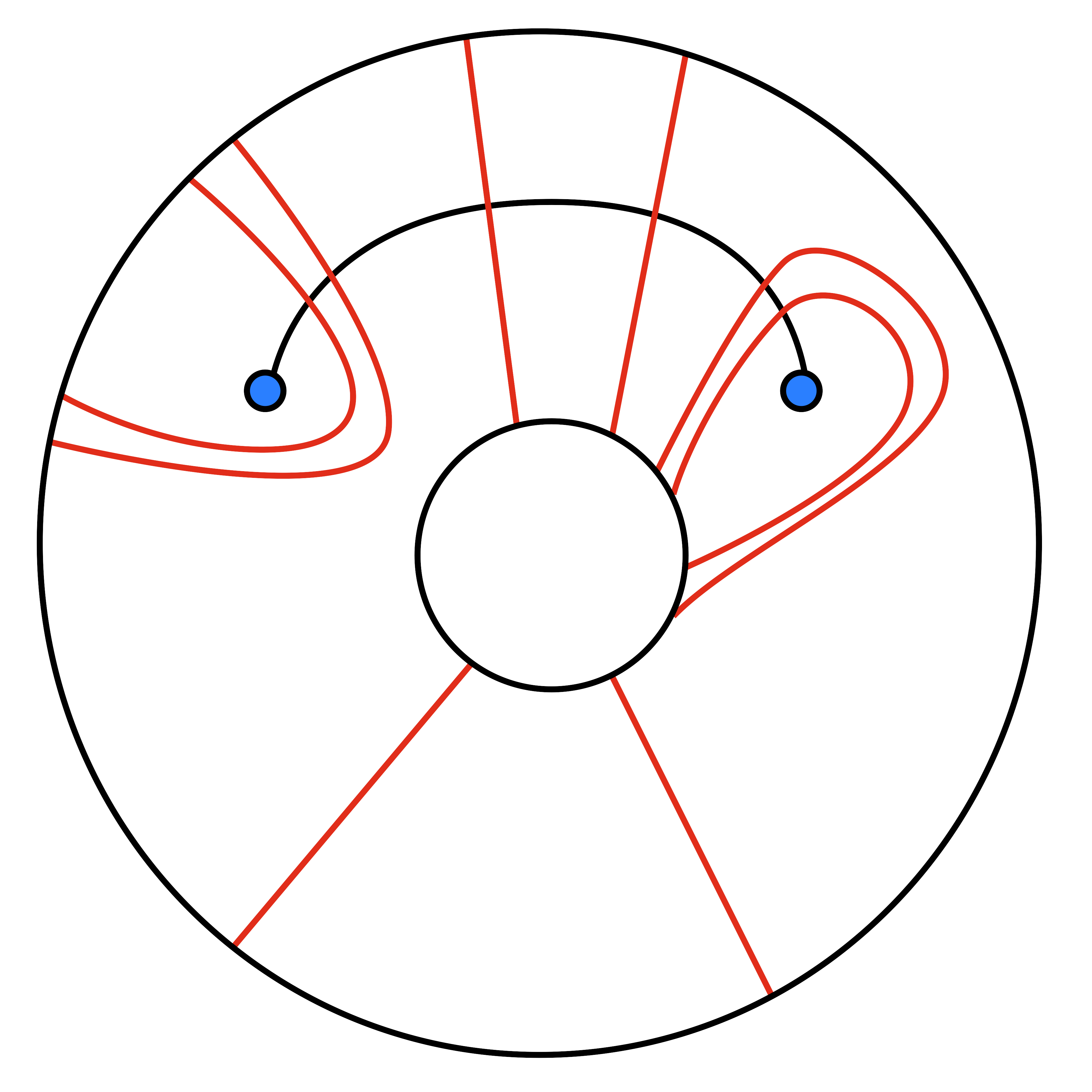}
\caption{The annulus $A = S \setminus \nbhd(b)$ contains the arc $\epsilon_b$. The curve $a$ (shown in red) intersects $A$ in spanning arcs and returning arcs. There must be the same number of returning arcs incident to each boundary component of $A$. Although in our diagram, each arc of $a \cap A$ intersects $\epsilon_b$ exactly once, in general the arcs may intersect $\epsilon_b$ more times.}
\label{Fig:AnnulusArg}
\end{figure}

We now define the curve $b' \subset \circ{S}$. Although there are many ways to do this, we will need to be careful about intersection numbers. See Figure \ref{Fig:AnnulusArg2}.

\begin{figure}[ht!]
\labellist
\small\hair 2pt
\pinlabel {$b_+$} [br] at 577 1710
\pinlabel {$b_-$} [tl] at 802 1071
\pinlabel {$b'$} [bl] at 237 567
\pinlabel {$D$} at 323 1280
\endlabellist
\centering
\includegraphics[scale=0.1]{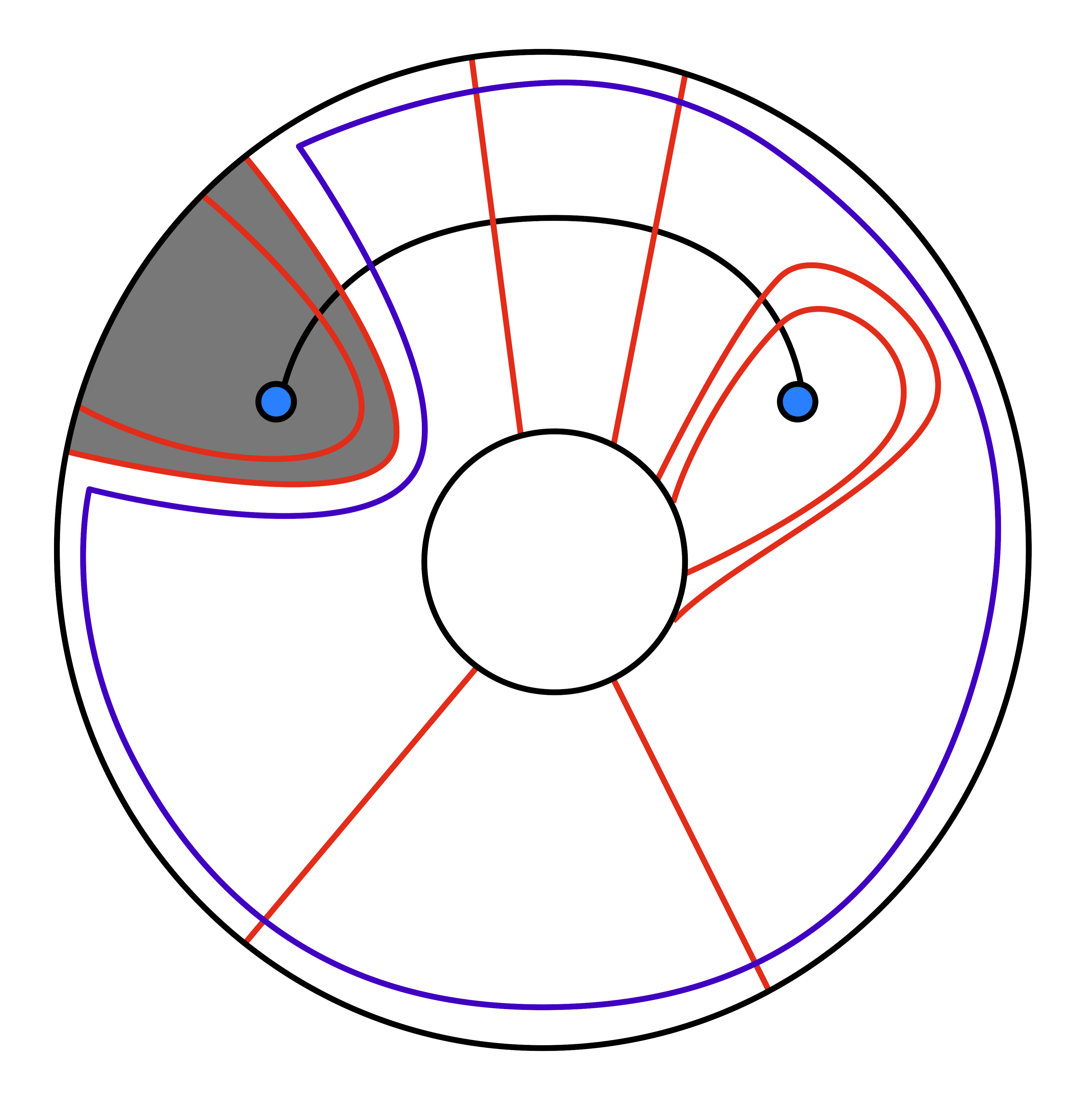}
\caption{We modify Figure \ref{Fig:AnnulusArg} to show the disc $D$ and the curve $b'$.}
\label{Fig:AnnulusArg2}
\end{figure}

Let $\zeta \subset a \cap A$ be an outermost returning arc, incident to a component $b_+ \subset \boundary A$, and let $D \subset A$ be the disc it cobounds with a subarc of $b_+$. Let $b' \subset (A \setminus \boundary \psi)$ be the simple closed curve that is the frontier of small regular neighborhood of $b_+ \cup D$.  Note that $b'$ is disjoint from $b$, parallel in $S$ to $b$, and that $b \cup b'$ separate the points of $\boundary \psi$. We also note that, by the construction, we have the following inequalities:
    \begin{enumerate}
        \item[(I1)] $|\boundary_a Q \cap b'| \leq |\boundary_a Q \cap b| - 2$
        \item[(I2)] $|\boundary_{a^*} Q \cap b'| \leq |\boundary_{a^*} Q \cap b| - 2$ (assuming $\boundary_{a^*} Q \neq \nil$.)
        \item[(I3)] $|\epsilon_a \cap b'| \leq |\epsilon_a \cap b| - 1$
\end{enumerate}

The surface $\boundary N \setminus \gamma$ is the union of two pairs of pants, each incident to each component $\gamma =  \mu \cup b \cup b'$, so $(N, \gamma)$ is a sutured manifold. Since no component of $\gamma$ can bound a disc in $N$, and since $N$ is irreducible, the sutured manifold $(N, \gamma)$ is taut. Since $a \cap b \neq \nil$, and since $Q$ is incompressible and $b$-$\boundary$-incompressible, we may apply Theorem \ref{Taylor thm}. Since $N[b] \subset S^3$ is a solid torus,  it does not have a lens space summand. Thus, either Conclusion (SM2) or Conclusion (SM3) holds. 

\textbf{Case 2a: } Conclusion (SM2) holds.

According to Conclusion (SM2), $(N[b], \gamma \setminus b)$ is taut and the arc $\beta$ may be properly isotoped  in $N[b]$ to an arc disjoint from the first surface $\Sigma[b]$ in a taut sutured manifold hierarchy for $(N[b], \gamma \setminus b)$. Before making use of the isotopy of $\beta$, we consider the surface $\Sigma[b]$ in more detail.  According to Conclusion (SM2), $(\Sigma[b],\boundary \Sigma[b])$ may be chosen to represent $\pm y$. The surface $\Sigma[b]$ is taut and conditioned \cite{Scharlemann}*{Definition 2.4} in $(N[b], \gamma \setminus b)$. ``Conditioned'', together with the definition of $y$ and the fact that  that $(\Sigma[b],\boundary \Sigma[b])$ represents $y$, implies (in our setting), that $\boundary \Sigma[b]$ is a single curve intersecting each component of $\gamma\setminus b$ exactly once. We can extend $\boundary \Sigma[b]$ through the solid torus $\nbhd(K(\phi_2)) = S^3 \setminus \interior(N[b])$ so that it becomes a Seifert surface $F$ for $K(\phi_2)$ whose intersection with $\nbhd(K(\phi_2))$ is a standard collar. Since $\Sigma[b]$ is taut and represents $\pm y$, $F$ is a minimal genus Seifert surface for $K(\phi_2)$. A small isotopy in $N[b]$ ensures it is transverse to $\beta$. Since $F$ is incompressible and $N[b]$ is irreducible, there is an isotopy of $\Sigma[b]$ relative to $\boundary \Sigma$ so that $\Sigma = F \cap N$ is incompressible in $N$. Consequently, we may assume that $F$ is standard with respect to $\phi$.

Since $\beta$ is properly isotopic in $N[b]$ to an arc disjoint from $\Sigma[b]$, the surface $\Sigma[b]$ is properly isotopic in $N[b]$ to a surface $\Sigma'[b]$ disjoint from $\beta$. Hence, $x(N) \leq x(N[b])$. If $K(\phi_2)$ is the unknot, then $F$, $\Sigma[b]$, and $\Sigma'[b]$  are all discs. In which case, $\Sigma'[b]$ is an essential disc in $N$, contradicting Lemma \ref{Jaco lem}. Thus, $K(\phi_2)$ is not an unknot, as claimed.

\textbf{Case 2b:} Conclusion (SM3) holds.

In this case,
\[
-2\chi(Q) + |\boundary Q \cap \gamma| \geq 2|\boundary Q \cap b|
\]
We will encounter a contradiction.

Rewrite the inequality to take into account the different types of boundary components of $Q$:
\[
-2(1 - |\boundary_a Q| - |\boundary_{a^*} Q|) \geq (|\boundary_a Q \cap b| - |\boundary_{a} Q \cap b'|) + (|\boundary_{a^*} Q \cap b| - |\boundary_{a^*} Q \cap b'|) + |\epsilon_1 \cap b| - |\epsilon_1 \cap b'| - |\boundary_0 Q \cap \mu|.
\]
Hence, applying inequalities (I1), (I2), and (I3), and using the fact that $(a \cup a^*) \cap \mu = \nil$, we get:
\[
-2 \geq |\boundary_a Q|(|a \cap b| - |a \cap b'| - 2) + |\boundary_{a^*} Q|(|a^* \cap b| - |a^* \cap b'| - 2) + |\epsilon_1 \cap b| - |\epsilon_1 \cap b'| - 1
\]
But (Count 1), (Count 2), and (Count 3) then imply that $-2 \geq 0$, a contradiction. Thus, this case does not occur.
\end{proof}

\begin{proof}[Proof of the Theorem \ref{Main Thm}]
Suppose that $(V,\psi)$ is nontrivial and that $(W, \phi_1)$ and $(W, \phi_2)$ are inequivalent complementary 1-tangles such that $K(\phi_1)$ and $K(\phi_2)$ are unknots. By Lemma \ref{trivial}, both $(W, \phi_1)$ and $(W, \phi_2)$ are trivial. By Proposition \ref{Seifert business}, $\omega(\phi_1, \phi_2) \leq 1$. By Lemma \ref{lem: wrapping well def},  $\omega(\phi_1, \phi_2) = 1$. Our theorem then follows immediately from Corollary \ref{small wrapping}. 
\end{proof}

\section{The Krebes 1-tangle}\label{krebes section}
\begin{krebestheorem}
The Krebes 1-tangle is persistent.
\end{krebestheorem}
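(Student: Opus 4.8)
The plan is to separate the trivial complementary arcs into two parity classes and dispatch them with completely different tools. First I would record that the Krebes 1-tangle $(V,\psi)$ is nontrivial and, more precisely, that $\psi$ has no local knot and that $N = V\setminus\nbhd(\psi)$ is $\boundary$-irreducible, so that Lemma \ref{trivial}, Theorem \ref{Main Thm}, and the sutured-manifold machinery of Section \ref{final proof} all apply. By Lemma \ref{trivial}, every unknot closure $(W,\phi)$ is automatically trivial, so it suffices to examine trivial complementary arcs, which are organized by their winding number in $W$.

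The odd case is handled by Fox $3$-colorings, recovering Abernathy's even/odd dichotomy. I would fix the standard diagram of $\psi$, solve the coloring equations across the tangle, and read off the relation the tangle imposes between the colors at the two points of $\boundary\psi$. Closing up with an arc $\phi$ of odd winding then forces a consistent \emph{nontrivial} $3$-coloring of $K(\phi)$; since the unknot admits only trivial $3$-colorings, no odd closure can be the unknot. This is the elementary coloring proof promised in the introduction, and it shows that any unknot closure must be even.

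It remains to eliminate the even closures, and here I would first cut the problem down using Theorem \ref{Main Thm}. That theorem produces at most two unknot closures, any two of which satisfy $\omega(\phi_1,\phi_2)=1$; by the winding-number computation behind Lemma \ref{coilers calculation} and the remark following it, wrapping index $1$ forces the two closures to have winding numbers of opposite parity in $W$. Combined with the coloring step (every unknot closure is even), two distinct unknot closures become impossible, so there is at most one unknot closure, and it is even. The final task is to rule out this single even candidate.

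The hard part will be this last elimination, where I would run a variation of the argument of Proposition \ref{Seifert business}. Assuming the surviving even closure $K(\phi)$ is the unknot, I would take a spanning disc, put it in normal position via Lemma \ref{normal position}, form $Q = \Delta\cap N$, set up sutures $\gamma = \mu\cup b\cup b'$ adapted to the closure, and apply Theorem \ref{Taylor thm}. Conclusion (SM1) is excluded because $N[b]$ is a solid torus, while the disc (solid-torus) structure of the assumed unknot exterior should obstruct (SM2), since the cocore $\beta$ meets every meridian disc; this would force the Euler-characteristic inequality (SM3). The crux is then to derive a contradiction by combining (SM3) with a lower bound on the Thurston norm $x(N)$ of the Krebes complement (equivalently, a genus bound for the even closures) coming from the explicit geometry of $\psi$; alternatively, when $Q$ has no longitudinal boundary components one lands in the band-sum situation of Lemma \ref{bubbles} and concludes, via Scharlemann's and Gabai's theorems, that the resulting band sum of two knotted pieces cannot be trivial. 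Establishing this norm/genus estimate for the specific Krebes tangle is where essentially all of the remaining work lies.
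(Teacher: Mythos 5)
Your first two steps (the $3$-coloring parity obstruction and the reduction via Theorem \ref{Main Thm} to a single remaining candidate closure) track the paper's strategy: the paper likewise uses the $3$-coloring of the induced 2-tangle to kill closures of odd wrapping index. But there is a genuine gap in your final elimination step, and it is precisely the heart of the matter. The sutured-manifold machinery you invoke --- Theorem \ref{Taylor thm}, Proposition \ref{Seifert business}, Lemma \ref{bubbles} --- is intrinsically a \emph{comparison} between two complementary arcs: the hypothetical unknot closure $\phi_1$ supplies the disc $\Delta$ and the surface $Q$, while a \emph{second} closure $\phi_2$ supplies the 2-handle curve $b$, and every conclusion of Proposition \ref{Seifert business} is a statement about $K(\phi_2)$, not about the unknot. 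Applying the theorem to the lone surviving closure with ``sutures adapted to the closure'' is not meaningful: without a second arc there is no curve $b$, no manifold $N[b]$, and nothing for $Q$ to intersect. Your parenthetical band-sum alternative has the same problem --- Lemma \ref{bubbles} produces a band-sum description of $K(\phi'_2)$ for a second arc $\phi'_2$, and with $K(\phi_1)$ the unknot one of the two summands is automatically unknotted, so ``a band sum of two knotted pieces'' never arises.

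The missing idea is to anchor the entire argument at an explicit, well-understood second closure: the paper uses the arc $\phi_2$ whose closure $K(\phi_2)$ is the Figure~8 knot, together with a concrete minimal genus Seifert surface $F'$ that is standard with respect to $\phi_2$ and meets the cocore $\beta$ in exactly one point. The special properties of the Figure~8 knot then terminate both branches of Proposition \ref{Seifert business}: in the band-sum case, Gabai--Scharlemann genus superadditivity forces $g(\kappa)=0$ and hence that the Figure~8 knot is slice, which it is not; in the other case, \emph{fiberedness} of the Figure~8 knot makes its minimal genus Seifert surface unique up to isotopy, so the hierarchy surface of conclusion (SM2) must be the explicit $F'$, and Lemma \ref{disjoint arc} plus a $\pi_1$ argument (a meridian of $K(\phi_2)$ cannot be homotopic into the image of $\pi_1$ of a Seifert surface) shows $F'$ cannot be pushed off $\beta$. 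Note also that your plan to force (SM3) and then contradict it with a Thurston-norm lower bound for the Krebes complement misreads how (SM3) is refuted: in the paper (SM3) is contradicted by intersection counts with the carefully chosen suture $b'$ (inequalities (I1)--(I3)), not by any norm estimate, and indeed no norm estimate could do the job in the other branch, since $x(N)\leq x(N[b])$ is perfectly consistent for the Krebes tangle. So the ``norm/genus estimate'' you defer to is not merely unproven --- it is not the right tool.
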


\begin{proof}[Proof of Theorem \ref{Krebes thm}]

Let $(V, \psi)$ be the Krebes 1-tangle, as in Figure \ref{Fig:Krebes}. Let $\phi_2$ be the arc shown on the left of Figure \ref{fig: Krebes2}, such that $K(\phi_2)$ is the Figure 8 knot. Let $b$ be the indicated longitude; it bounds a disc in $W\setminus \phi_2$. Let $\beta \subset N[b]$ be the cocore of the 2-handle.  Since $S[b]$ bounds a ball that is a regular neighborhood of $\phi'_2$, we may extend the endpoints of $\beta$ vertically through the ball to lie on $\phi_2$, creating a $\theta$-curve $\Gamma$ containing $K(\phi_2)$ as a constituent knot, having the curve $b$ as a meridian of the edge  $\beta = \Gamma \setminus K(\phi_2)$ and with $N = S^3 \setminus \nbhd(\Gamma)$. The middle and right of Figure \ref{fig: Krebes2} show two perspectives of $\Gamma$. 

Let $F'$ be the minimal genus Seifert surface for $K(\phi_2)$ given by Seifert's algorithm applied to the diagram in Figure \ref{fig: Krebes2}. As shown in Figure \ref{fig: Krebes3}, there is an isotopy of $F'$, relative to $K(\phi_2)$, so that $F'$ remains standard with respect to $\phi_2$ and so that $\beta \cap F'$ contains a single point in the interior of $F'$. (For clarity, the figure actually shows an isotopy of $\Gamma$ but reversing it gives an isotopy of $F'$.)

\begin{figure}[ht!]
\labellist
\small\hair 2pt
\pinlabel {$\phi_2$} [b] at 125 195
\pinlabel {$b$} [t] at 121 94
\pinlabel {$\phi_2$} [b] at 335 219
\pinlabel {$\beta$} [t] at 340 97
\pinlabel {$\beta$} [t] at 564 91
\endlabellist
\centering
\includegraphics[scale=0.6]{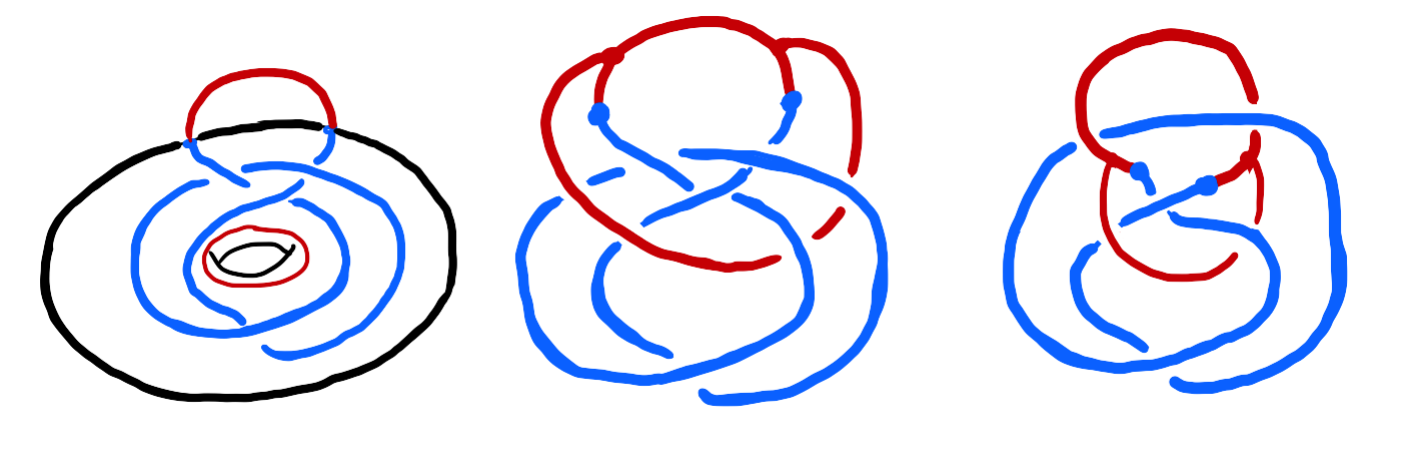}
\caption{On the left we see that the closure of the Krebes tangle using the arc $\phi_2$ is the Figure 8 knot. The preferred longitude $b$ bounds a disc in $W$ disjoint from $\phi_2$. The middle and right show two perspectives of the $\theta$-curve $\Gamma$ having $K(\phi_2)$ as a constituent knot and an extension of the arc $\beta$ as the edge $\Gamma \setminus K(\phi_2)$.  }
\label{fig: Krebes2}
\end{figure}

\begin{figure}[ht!]
\centering
\includegraphics[scale=0.4]{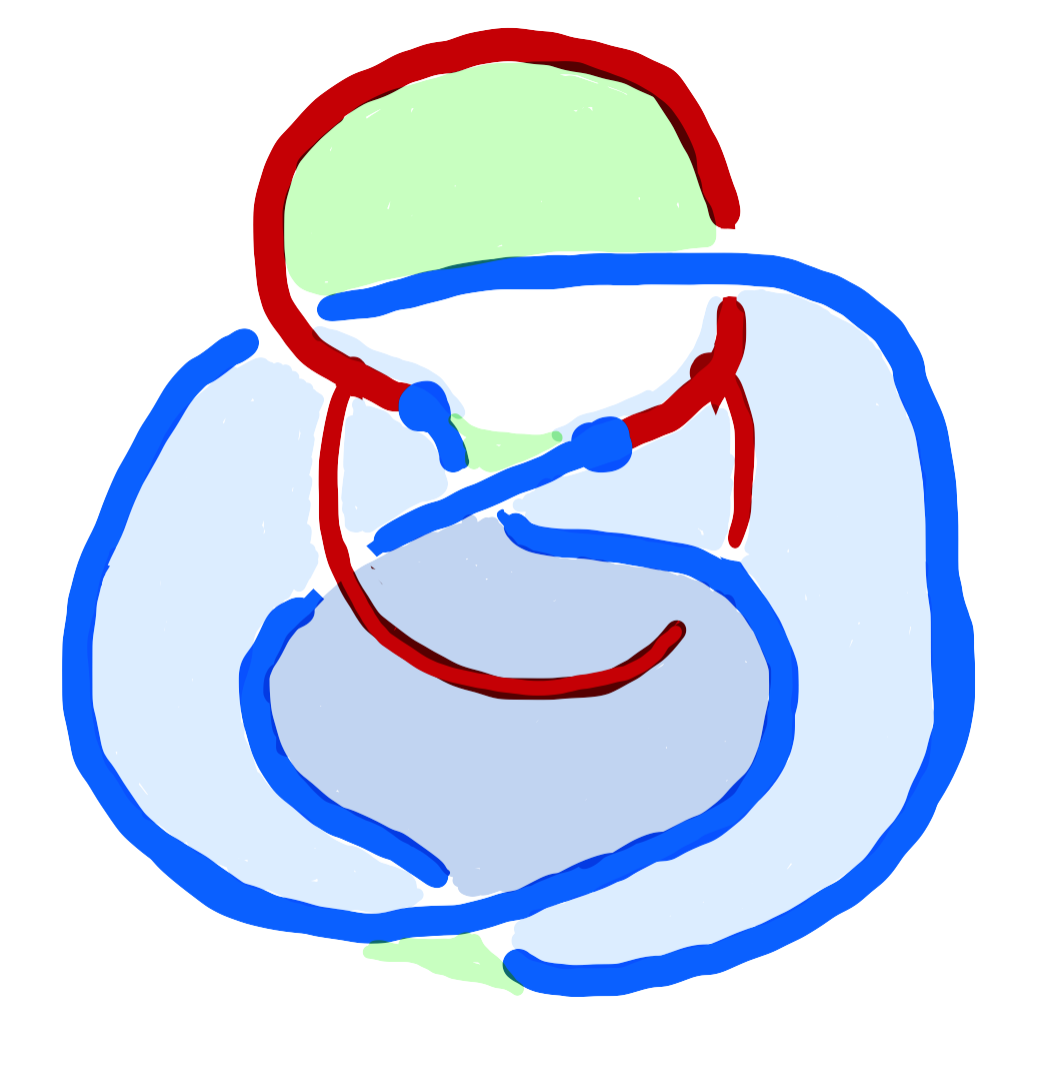}
\caption{There is a minimal genus Seifert surface $F'$ for $K(\phi_2)$ intersecting $\beta$ in a single point which is standard with respect to $\phi_2$. }
\label{fig: Krebes3}
\end{figure}

Suppose, to obtain a contradiction, that there exists  a complementary 1-tangle $(W, \phi_1)$ such that $K(\phi_1)$ is the unknot. By Lemma \ref{lem: wrapping well def}, $\omega(\phi_1, \phi_2) \geq 1$. 

Suppose, first, that $\omega(\phi_1, \phi_2) = 1$.  By Lemma \ref{wrapping 1 lem}, $\omega(\phi_2, \phi_1) = 1$. By Lemma \ref{reduction}, the 2-tangle $(V[b], \psi \cup \beta)$ admits an unknot closure. However, (inspired by \cites{SiWi, KL}), Figure \ref{fig:Krebescoloring} shows that $(V[b], \psi \cup \beta)$ admits a $\Z/3\Z$ coloring (aka a ``3-coloring'') of its strands. Note that the coloring of the endpoints is constant, so this 3-coloring can be extended to a 3-coloring of any knot into which the 2-tangle embeds. In fact, if we take an odd number of parallel copies of the arc $\beta$, we see that the resulting tangle also admits a 3-coloring. Hence, if the Krebes' 1-tangle admits an unknot closure it must be by a complementary 1-tangle $\phi_1$ with $\omega(\phi_2, \phi_1)$ even. This gives a ``low-tech'' proof of Abernathy's result \cite{Abernathy}*{Theorem 1.3} that the determinant of any knot obtained by an odd closure of the Krebes tangle has determinant a multiple of 3. Consequently, we conclude that  $\omega(\phi_1, \phi_2) \geq 2$.

\begin{figure}[ht!]
\centering
\includegraphics[scale=0.4]{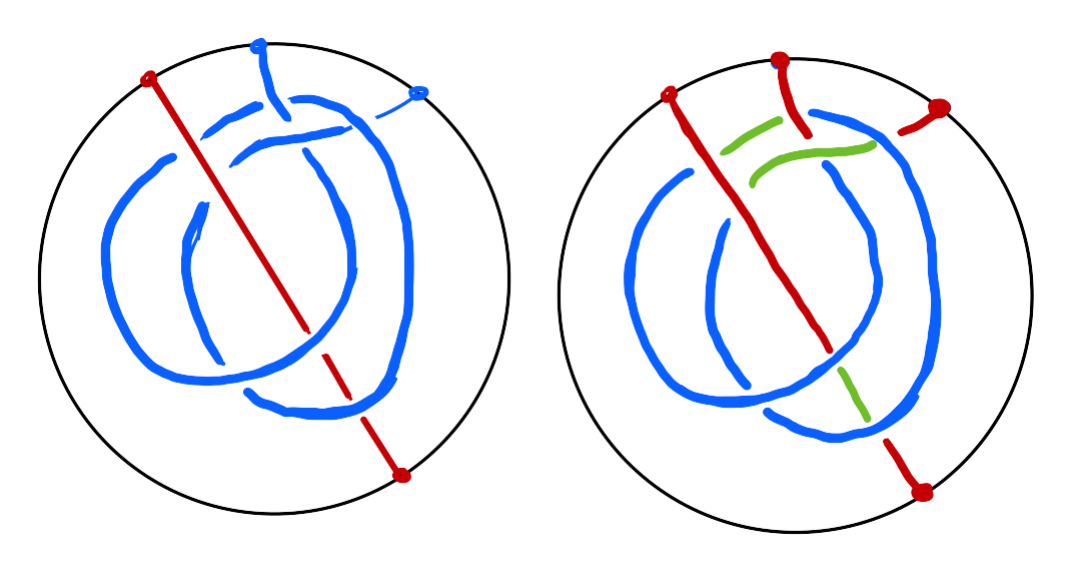}
\caption{On the left is the 2-tangle $b$-induced from the Krebes 1-tangle. On the right, we show it admits a 3-coloring with all boundary points of the same color. Hence, any knot which is the closure of the Krebes 1-tangle of wrapping index 1 from $(W, \phi_2)$ is 3-colorable. In particular, it is not the unknot. We can also see that the tangle obtained by taking an odd number of parallel copies of $\beta$ is 3-colorable, giving another proof of \cite{Abernathy}*{Theorem 1.3}.}
\label{fig:Krebescoloring}
\end{figure}

We apply Theorem \ref{Seifert business}, to conclude that one of the following occurs:
\begin{enumerate}
\item There is a complementary 1-tangle $(W, \phi'_2)$ equivalent to $(W, \phi_2)$ such that $K(\phi_2)$ is the result of a nontrivial band sum of $K(\phi_1)$ and $\kappa = \phi_1 \cup \phi'_2$. 
\item There is a minimal genus Seifert surface $F$ for $K(\phi_2)$ such that $\Sigma[b] = F \cap N[b]$ is properly isotopic in $N[b]$ to a surface disjoint from $\beta$.
\end{enumerate}

We examine these possibilities in turn.

\textbf{Case 1:} $K(\phi_2)$ is the nontrivial band sum of the unknot $K(\phi_1)$ and $\kappa = \phi_1 \cup \phi'_2$ where $\phi'_2$ is the arc of a 1-tangle equivalent to $\phi_2$.

By a theorem of Gabai \cite{Gabai} and  Scharlemann \cite{Scharlemann}*{Theorem 8.4}, 
\[
1 = g(K(\phi_2)) \geq g(\kappa) + g(K(\phi_1)) = g(\kappa).
\]
Equality holds if and only if the band is disjoint from a minimal genus Seifert surface for the split link $\kappa \cup K(\phi_1)$. However, since $K(\phi_1)$ is the unknot, this would imply that band sum is trivial, which it is not. Hence, $1 > g(\kappa) \geq 0$, so $\kappa$ is the unknot. However, this would imply it is possible to attach a band to the figure 8 knot $K(\phi_2)$ to obtain the unlink of two components. In which case, the figure 8 knot would be slice, which it is not. Thus, this case does not occur.

\textbf{Case 2:} There is a minimal genus Seifert surface $F$ for $K(\phi_2)$ such that $\Sigma[b] = F \cap N[b]$ is properly isotopic in $N[b]$ to a surface disjoint from $\beta$.

The Figure 8 knot $K(\phi_2)$ is fibered, so, ignoring $\beta$, the surface $\Sigma'[b] = F' \cap N[b]$ is isotopic rel $K(\phi_2)$ to the surface $\Sigma[b] = F \cap N[b]$. By Lemma \ref{disjoint arc}, since $F'$ is not disjoint from $\beta$, the surface $\Sigma' = F' \cap N$ admits a $d$-$\boundary$-compressing disc for some simple closed curve $d \subset \boundary N[b]$ intersecting $\boundary \Sigma'$ minimally up to isotopy.  Let $D$ be the $\boundary$-compressing disc. Let $\delta' = \boundary D \cap \Sigma'$ and $\delta = \boundary D \cap \boundary N[b]$. Since $F'$ is an incompressible Seifert surface, the circle $\boundary D = \delta \cup \delta'$ bounds a disc in $\Sigma'[b] \cup \boundary N[b]$ that intersects $\boundary \Sigma'[b]$ in a single arc. That arc divides the disc into two subdiscs $D' \subset \Sigma'[b]$ and $D'' \subset \boundary N[b]$. Since $|F' \cap \beta| = 1$, the disc $D'$ contains that single point of intersection. Hence, $D''$ contains a unique endpoint of $\boundary \beta$. Thus, the sphere $P = D \cup D' \cup D''$ bounds a 3-ball $B \subset N[b]$ intersecting $\beta$ in a single arc $\beta'$. Since $\beta$ is contained in an unknot in $S^3$, the arc $\beta'$ is isotopic relative to $\boundary \beta'$ into $P$. We may arrange that the isotopy takes $\beta'$ to an arc intersecting $\boundary \Sigma'[b] \cap (D' \cup D'')$ exactly once. 
We now show this is impossible.

\begin{figure}[ht!]
\labellist
\small\hair 2pt
\pinlabel {$D$} at 134 132
\pinlabel {$\beta'$} [l] at 157 185
\pinlabel{$\delta_1$} [r] at 110 165
\pinlabel{$\delta_2$} [r] at 118 110
\pinlabel{$\delta'_1$} [b] at 196 247
\pinlabel{$\delta'_2$} [t] at 180 111
\pinlabel{$\boundary N[b]$} [r] at 17 85
\pinlabel{$p$} [tr] at 113 139
\pinlabel{$\Sigma'[b]$} [b] at 84 273
\endlabellist
\centering
\includegraphics[scale=0.6]{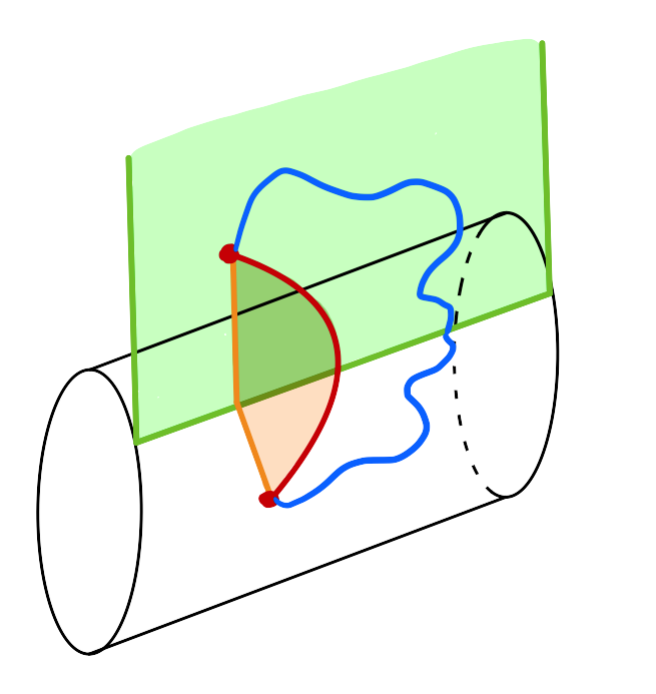}
\caption{The disc $D$ giving a parallelism of $\beta'$ into $\Sigma'[b] \cup \boundary N[b]$.}
\label{fig: boundarycomp}
\end{figure}

Recycling notation, let $D \subset N[b]$ be an embedded disc with $\boundary D$ the union of three arcs $\beta'$, $\delta_1$, and $\delta_2$. See Figure \ref{fig: boundarycomp}. The arc $\beta'$ is a component of $\beta \setminus F'$. The arc $\delta_1 \subset F'$ and the arc $\delta_2 \subset \boundary N[b]$. Let $p = \boundary D \cap \boundary \Sigma'[b]$ and orient $\boundary D$ so that $p$ is the initial endpoint of $\delta_1$. 

\emph{A priori}, we do not have much information about the paths $\delta_1$ and $\delta_2$, so we aim to replace them with other arcs. Let $\delta'_1 \subset \Sigma'[b]$ be any path joining a point $p' \in \boundary \Sigma'[b]$ to $\beta \cap F$. Let $\delta'_2 \subset \boundary N[b]$ be any path in $\boundary N[b]$ with interior disjoint from $\boundary \Sigma'[b]$ and joining the initial endpoint of $\delta_2$ to $p'$. Let $\lambda$ be a path in $\boundary \Sigma'[b]$ from $p$ to $p'$. For an oriented path $\alpha$, let $\overline{\alpha}$ denote its reverse and let $[\alpha]$ denote its based homotopy class in $\pi_1(N[b],p')$. We concatenate paths from left to right. Then,
\[
[\delta'_1 ~\beta'~ \delta'_2] = [\delta'_1 ~ \overline{\delta_1} ~ \lambda ] [\overline{\lambda} ~ \delta_1 ~ \beta' ~ \delta_2 ~ \lambda][\overline{\lambda}~ \overline{\delta_2} ~ \delta'_2] = [\delta'_1~  \overline{\delta_1} ~ \lambda ][\overline{\lambda}~\overline{\delta_2} ~ \delta'_2]
\]
The class $[\delta'_1 ~ \overline{\delta_1}~  \lambda ]$ lies in the image $H$ of $\pi_1(\Sigma'[b], p')$ induced by the inclusion of $\Sigma'[b]$ into $N[b]$.  The class $[\overline{\lambda}~ \overline{\delta_2}~  \delta'_2]$ lies in the subgroup of $\pi_1(N[b], p')$ generated by the preferred longitude of $\boundary N[b]$. Hence, $[\delta'_1 ~ \beta'~  \delta'_2]$ lies in $H$.

As shown in Figure \ref{fig: meridians}, for either choice of $\beta'$, we may choose the arcs $\delta'_1$ and $\delta'_2$, so that $\delta'_1 \cup \beta' \cup \delta'_2$ is a meridian of $K(\phi_2)$. But, for any knot, it is the case that a meridian is never freely homotopic into the image of the fundamental group of a Seifert surface induced by the inclusion of the Seifert surface into the knot complement. Thus, there is no unknot closure of the Krebes tangle.
\end{proof}

\begin{figure}[ht!]
\labellist
\small\hair 2pt
\pinlabel {$\delta'_1 \cup \delta'_2$} [l] at 296 246
\pinlabel {$\delta'_1 \cup \delta'_2$} [l] at 566 49
\endlabellist
\centering
\includegraphics[scale=0.5]{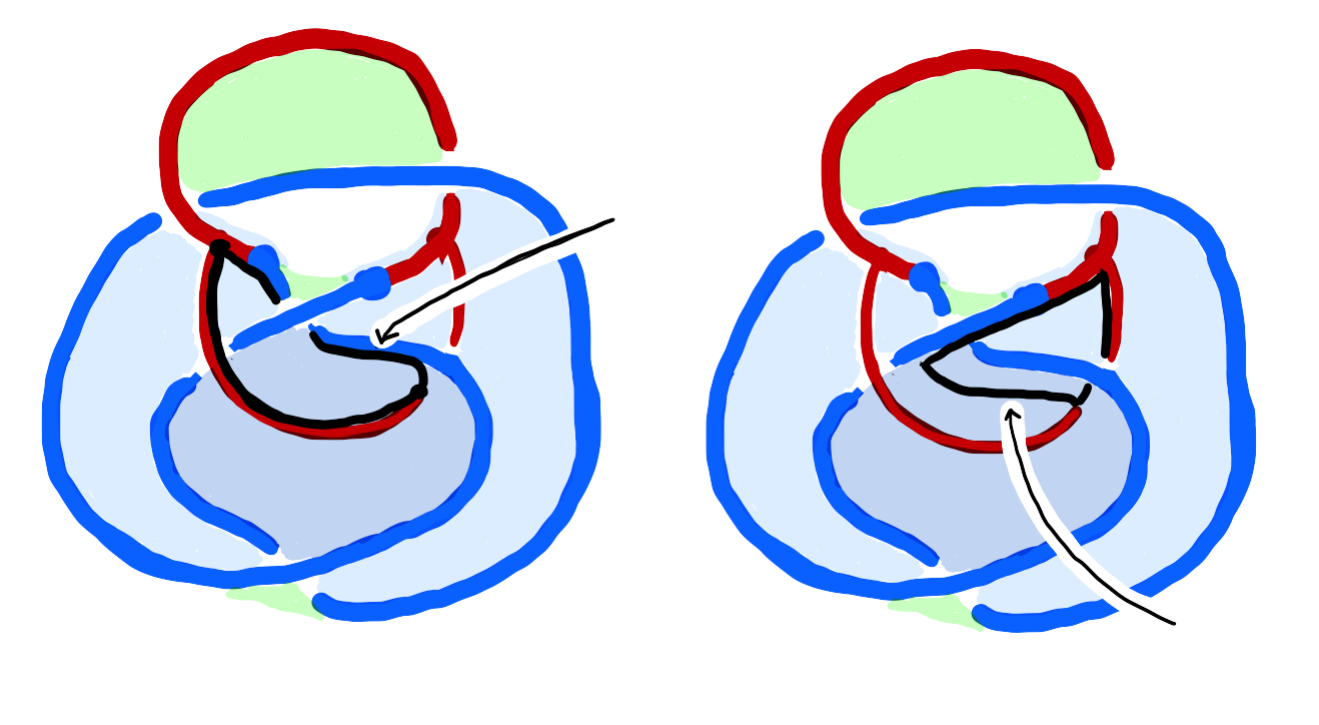}
\caption{For either choice of $\beta'$ we can choose the arcs $\delta'_1$ and $\delta'_2$ so that $\delta'_1 \cup \beta' \delta'_2$ is a meridian of the knot.}
\label{fig: meridians}
\end{figure}

\section{Acknowledgements}
I am grateful to Colin Adams, Ken Baker, Luis Celso Chan, and Nathan Dunfield for helpful conversations. In particular, Ken Baker drew my attention to the Krebes problem. This work was partially supported by NSF Grant DMS-2104022 and a Colby Research Grant. 

\begin{bibdiv}
    \begin{biblist}
    
    \bib{AJLO}{article}{
   author={Abe, Tetsuya},
   author={Jong, In Dae},
   author={Luecke, John},
   author={Osoinach, John},
   title={Infinitely many knots admitting the same integer surgery and a
   four-dimensional extension},
   journal={Int. Math. Res. Not. IMRN},
   date={2015},
   number={22},
   pages={11667--11693},
   issn={1073-7928},
   review={\MR{3456699}},
   doi={10.1093/imrn/rnv008},
}

    \bib{AT}{article}{
   author={Abe, Tetsuya},
   author={Tange, Motoo},
   title={Ribbon disks with the same exterior},
   journal={Comm. Anal. Geom.},
   volume={30},
   date={2022},
   number={2},
   pages={257--269},
   issn={1019-8385},
   review={\MR{4516070}},
   doi={10.4310/cag.2022.v30.n2.a1},
}

 \bib{Abernathy}{article}{
   author={Abernathy, Susan M.},
   title={On Krebes's tangle},
   journal={Topology Appl.},
   volume={160},
   date={2013},
   number={12},
   pages={1379--1383},
   issn={0166-8641},
   review={\MR{3072699}},
   doi={10.1016/j.topol.2013.05.009},
}
 
 \bib{AG}{article}{
   author={Abernathy, Susan M.},
   title={The Kauffman bracket ideal for genus-1 tangles},
   journal={J. Knot Theory Ramifications},
   volume={24},
   date={2015},
   number={2},
   pages={1550007, 30},
   issn={0218-2165},
   review={\MR{3334660}},
   doi={10.1142/S0218216515500078},
}

   \bib{AERSS}{article}{
   author={Ameur, Kheira},
   author={Elhamdadi, Mohamed},
   author={Rose, Tom},
   author={Saito, Masahico},
   author={Smudde, Chad},
   title={Tangle embeddings and quandle cocycle invariants},
   journal={Experiment. Math.},
   volume={17},
   date={2008},
   number={4},
   pages={487--497},
   issn={1058-6458},
   review={\MR{2484432}},
}

\bib{BGL}{article}{
   author={Baker, Kenneth L.},
   author={Gordon, Cameron},
   author={Luecke, John},
   title={Bridge number and integral Dehn surgery},
   journal={Algebr. Geom. Topol.},
   volume={16},
   date={2016},
   number={1},
   pages={1--40},
   issn={1472-2747},
   review={\MR{3470696}},
   doi={10.2140/agt.2016.16.1},
}

\bib{BPV}{article}{
      title={Bridge indices of spatial graphs and diagram colorings}, 
      author={Sarah Blackwell},
      author= {Puttipong Pongtanapaisan},
      author={Hanh Vo},
      year={2024},
      eprint={2410.23253},
      primaryClass={math.GT},
      url={https://arxiv.org/abs/2410.23253}, 
}

 \bib{BS1}{article}{
   author={Bleiler, Steven},
   author={Scharlemann, Martin},
   title={Tangles, property $P$, and a problem of J. Martin},
   journal={Math. Ann.},
   volume={273},
   date={1986},
   number={2},
   pages={215--225},
   issn={0025-5831},
   review={\MR{0817877}},
   doi={10.1007/BF01451402},
}
 
 \bib{BS2}{article}{
   author={Bleiler, Steven},
   author={Scharlemann, Martin},
   title={A projective plane in ${\bf R}^4$ with three critical points is
   standard. Strongly invertible knots have property $P$},
   journal={Topology},
   volume={27},
   date={1988},
   number={4},
   pages={519--540},
   issn={0040-9383},
   review={\MR{0976593}},
   doi={10.1016/0040-9383(88)90030-4},
}

\bib{CM}{article}{
   author={Cattabriga, Alessia},
   author={Mulazzani, Michele},
   title={$(1,1)$-knots via the mapping class group of the twice punctured
   torus},
   journal={Adv. Geom.},
   volume={4},
   date={2004},
   number={2},
   pages={263--277},
   issn={1615-715X},
   review={\MR{2055680}},
   doi={10.1515/advg.2004.016},
}

\bib{CMS}{article}{
   author={Cho, Sangbum},
   author={McCullough, Darryl},
   author={Seo, Arim},
   title={Arc distance equals level number},
   journal={Proc. Amer. Math. Soc.},
   volume={137},
   date={2009},
   number={8},
   pages={2801--2807},
   issn={0002-9939},
   review={\MR{2497495}},
   doi={10.1090/S0002-9939-09-09874-8},
}

\bib{CR}{article}{
   author={Cochran, Tim D.},
   author={Ruberman, Daniel},
   title={Invariants of tangles},
   journal={Math. Proc. Cambridge Philos. Soc.},
   volume={105},
   date={1989},
   number={2},
   pages={299--306},
   issn={0305-0041},
   review={\MR{0974985}},
   doi={10.1017/S0305004100067785},
}

 \bib{CK}{article}{
   author={Choi, Doo Ho},
   author={Ko, Ki Hyoung},
   title={Parametrizations of 1-bridge torus knots},
   journal={J. Knot Theory Ramifications},
   volume={12},
   date={2003},
   number={4},
   pages={463--491},
   issn={0218-2165},
   review={\MR{1985906}},
   doi={10.1142/S0218216503002445},
}

  \bib{Conway}{article}{
   author={Conway, J. H.},
   title={An enumeration of knots and links, and some of their algebraic
   properties},
   conference={
      title={Computational Problems in Abstract Algebra},
      address={Proc. Conf., Oxford},
      date={1967},
   },
   book={
      publisher={Pergamon, Oxford-New York-Toronto, Ont.},
   },
   date={1970},
   pages={329--358},
   review={\MR{0258014}},
}  

\bib{DMM}{article}{
   author={Deruelle, Arnaud},
   author={Miyazaki, Katura},
   author={Motegi, Kimihiko},
   title={Networking Seifert surgeries on knots},
   journal={Mem. Amer. Math. Soc.},
   volume={217},
   date={2012},
   number={1021},
   pages={viii+130},
   issn={0065-9266},
   isbn={978-0-8218-5333-7},
   review={\MR{2951665}},
   doi={10.1090/S0065-9266-2011-00635-0},
}

 \bib{EM}{article}{
   author={Eudave Mu\~noz, Mario},
   title={Primeness and sums of tangles},
   journal={Trans. Amer. Math. Soc.},
   volume={306},
   date={1988},
   number={2},
   pages={773--790},
   issn={0002-9947},
   review={\MR{0933317}},
   doi={10.2307/2000822},
}

\bib{Frias}{article}{
   author={Fr\'ias, Jos\'e},
   title={On a parameterization of $(1,1)$-knots},
   journal={J. Knot Theory Ramifications},
   volume={31},
   date={2022},
   number={9},
   pages={Paper No. 2250047, 17},
   issn={0218-2165},
   review={\MR{4475490}},
   doi={10.1142/S021821652250047X},
}

\bib{Gabai}{article}{
   author={Gabai, David},
   title={Genus is superadditive under band connected sum},
   journal={Topology},
   volume={26},
   date={1987},
   number={2},
   pages={209--210},
   issn={0040-9383},
   review={\MR{0895573}},
   doi={10.1016/0040-9383(87)90061-9},
}

\bib{Gabai1}{article}{
   author={Gabai, David},
   title={Foliations and the topology of $3$-manifolds},
   journal={J. Differential Geom.},
   volume={18},
   date={1983},
   number={3},
   pages={445--503},
   issn={0022-040X},
   review={\MR{0723813}},
}
\bib{Gabai2}{article}{
   author={Gabai, David},
   title={Foliations and the topology of $3$-manifolds. II},
   journal={J. Differential Geom.},
   volume={26},
   date={1987},
   number={3},
   pages={461--478},
   issn={0022-040X},
   review={\MR{0910017}},
}
\bib{Gabai3}{article}{
   author={Gabai, David},
   title={Foliations and the topology of $3$-manifolds. II},
   journal={J. Differential Geom.},
   volume={26},
   date={1987},
   number={3},
   pages={461--478},
   issn={0022-040X},
   review={\MR{0910017}},
}

\bib{GL}{article}{
   author={Gordon, C. McA.},
   author={Luecke, J.},
   title={Knots are determined by their complements},
   journal={J. Amer. Math. Soc.},
   volume={2},
   date={1989},
   number={2},
   pages={371--415},
   issn={0894-0347},
   review={\MR{0965210}},
   doi={10.2307/1990979},
}

\bib{HM}{article}{
   author={Hayashi, Chuichiro},
   author={Motegi, Kimihiko},
   title={Only single twists on unknots can produce composite knots},
   journal={Trans. Amer. Math. Soc.},
   volume={349},
   date={1997},
   number={11},
   pages={4465--4479},
   issn={0002-9947},
   review={\MR{1355073}},
   doi={10.1090/S0002-9947-97-01628-0},
}
 
 \bib{Jaco}{article}{
   author={Jaco, William},
   title={Adding a $2$-handle to a $3$-manifold: an application to property
   $R$},
   journal={Proc. Amer. Math. Soc.},
   volume={92},
   date={1984},
   number={2},
   pages={288--292},
   issn={0002-9939},
   review={\MR{0754723}},
   doi={10.2307/2045205},
}

  \bib{JKLMTZ}{article}{
   author={Jang, Byoungwook},
   author={Kronaeur, Anna},
   author={Luitel, Pratap},
   author={Medici, Daniel},
   author={Taylor, Scott A.},
   author={Zupan, Alexander},
   title={New examples of Brunnian theta graphs},
   journal={Involve},
   volume={9},
   date={2016},
   number={5},
   pages={857--875},
   issn={1944-4176},
   review={\MR{3541985}},
   doi={10.2140/involve.2016.9.857},
}

\bib{KL}{article}{
   author={Kauffman, Louis H.},
   author={Lopes, Pedro},
   title={The prevalence of persistent tangles},
   journal={Topology Appl.},
   volume={271},
   date={2020},
   pages={107040, 12},
   issn={0166-8641},
   review={\MR{4050744}},
   doi={10.1016/j.topol.2019.107040},
}

\bib{Kinoshita}{article}{
   author={Kinoshita, Shin'ichi},
   title={Alexander polynomials as isotopy invariants. I},
   journal={Osaka Math. J.},
   volume={10},
   date={1958},
   pages={263--271},
   issn={0388-0699},
   review={\MR{0102819}},
}
    
    \bib{KMS}{article}{
   author={Kouno, Masaharu},
   author={Motegi, Kimihiko},
   author={Shibuya, Tetsuo},
   title={Twisting and knot types},
   journal={J. Math. Soc. Japan},
   volume={44},
   date={1992},
   number={2},
   pages={199--216},
   issn={0025-5645},
   review={\MR{1154840}},
   doi={10.2969/jmsj/04420199},
}
    
\bib{Krebes}{article}{
   author={Krebes, David A.},
   title={An obstruction to embedding $4$-tangles in links},
   journal={J. Knot Theory Ramifications},
   volume={8},
   date={1999},
   number={3},
   pages={321--352},
   issn={0218-2165},
   review={\MR{1691425}},
   doi={10.1142/S0218216599000213},
}

\bib{KSW}{article}{
   author={Krebes, David A.},
   author={Silver, Daniel S.},
   author={Williams, Susan G.},
   title={Persistent invariants of tangles},
   journal={J. Knot Theory Ramifications},
   volume={9},
   date={2000},
   number={4},
   pages={471--477},
   issn={0218-2165},
   review={\MR{1758866}},
   doi={10.1142/S0218216500000244},
}

\bib{Montesinos}{article}{
author={Montesinos, Jos\'e M.},
   title={Surgery on links and double branched covers of $S^{3}$},
   conference={
      title={Knots, groups, and $3$-manifolds (Papers dedicated to the
      memory of R. H. Fox)},
   },
   book={
      series={Ann. of Math. Stud.},
      volume={No. 84},
      publisher={Princeton Univ. Press, Princeton, NJ},
   },
   date={1975},
   pages={227--259}, 
   } 

\bib{OT}{article}{
   author={Ozawa, Makoto},
   author={Tsutsumi, Yukihiro},
   title={Minimally knotted spatial graphs are totally knotted},
   journal={Tokyo J. Math.},
   volume={26},
   date={2003},
   number={2},
   pages={413--421},
   issn={0387-3870},
   review={\MR{2020794}},
   doi={10.3836/tjm/1244208599},
}

\bib{PSW}{article}{
   author={Przytycki, J\'ozef H.},
   author={Silver, Daniel S.},
   author={Williams, Susan G.},
   title={3-manifolds, tangles and persistent invariants},
   journal={Math. Proc. Cambridge Philos. Soc.},
   volume={139},
   date={2005},
   number={2},
   pages={291--306},
   issn={0305-0041},
   review={\MR{2168088}},
   doi={10.1017/S0305004105008753},
}

\bib{Ruberman}{article}{
   author={Ruberman, Daniel},
   title={Embedding tangles in links},
   journal={J. Knot Theory Ramifications},
   volume={9},
   date={2000},
   number={4},
   pages={523--530},
   issn={0218-2165},
   review={\MR{1758870}},
   doi={10.1142/S0218216500000281},
}

\bib{Scharlemann}{article}{
   author={Scharlemann, Martin},
   title={Sutured manifolds and generalized Thurston norms},
   journal={J. Differential Geom.},
   volume={29},
   date={1989},
   number={3},
   pages={557--614},
   issn={0022-040X},
   review={\MR{0992331}},
}

\bib{Scharlemann-bandsum}{article}{
   author={Scharlemann, Martin},
   title={Smooth spheres in ${\bf R}^4$ with four critical points are
   standard},
   journal={Invent. Math.},
   volume={79},
   date={1985},
   number={1},
   pages={125--141},
   issn={0020-9910},
   review={\MR{0774532}},
   doi={10.1007/BF01388659},
}

\bib{Schreiber}{article}{
author={Schreiber, Nathaniel B.}
title={A computational approach to tangles}
journal={Senior thesis, University of Oregon}
date={June, 2018}
eprint={https://pages.uoregon.edu/lipshitz/Teaching/SchieberSeniorThesis.pdf}
}

\bib{SiWi}{article}{
   author={Silver, Daniel S.},
   author={Williams, Susan G.},
   title={Virtual tangles and a theorem of Krebes},
   journal={J. Knot Theory Ramifications},
   volume={8},
   date={1999},
   number={7},
   pages={941--945},
   issn={0218-2165},
   review={\MR{1714303}},
   doi={10.1142/S0218216599000596},
}

\bib{SiWi2}{article}{
   author={Silver, Daniel S.},
   author={Williams, Susan G.},
   title={Tangles and links: a view with trees},
   journal={J. Knot Theory Ramifications},
   volume={27},
   date={2018},
   number={12},
   pages={1850061, 7},
   issn={0218-2165},
   review={\MR{3876341}},
   doi={10.1142/S021821651850061X},
}

\bib{SW}{article}{
   author={Simon, Jonathan K.},
   author={Wolcott, Keith},
   title={Minimally knotted graphs in $S^3$},
   journal={Topology Appl.},
   volume={37},
   date={1990},
   number={2},
   pages={163--180},
   issn={0166-8641},
   review={\MR{1080350}},
   doi={10.1016/0166-8641(90)90061-6},
}

 \bib{Taylor-bandtaut}{article}{
   author={Taylor, Scott A.},
   title={Band-taut sutured manifolds},
   journal={Algebr. Geom. Topol.},
   volume={14},
   date={2014},
   number={1},
   pages={157--215},
   issn={1472-2747},
   review={\MR{3158757}},
   doi={10.2140/agt.2014.14.157},
}

\bib{Taylor-2h}{article}{
   author={Taylor, Scott A.},
   title={Comparing 2-handle additions to a genus 2 boundary component},
   journal={Trans. Amer. Math. Soc.},
   volume={366},
   date={2014},
   number={7},
   pages={3747--3769},
   issn={0002-9947},
   review={\MR{3192616}},
   doi={10.1090/S0002-9947-2014-06253-3},
}

\bib{Thurston-norm}{article}{
   author={Thurston, William P.},
   title={A norm for the homology of $3$-manifolds},
   journal={Mem. Amer. Math. Soc.},
   volume={59},
   date={1986},
   number={339},
   pages={i--vi and 99--130},
   issn={0065-9266},
   review={\MR{0823443}},
}

\end{biblist}
\end{bibdiv}

\end{document}